\documentclass[12pt]{article}

\usepackage[dvipsnames]{xcolor}
\usepackage[all]{xy}
\usepackage{amssymb}
\usepackage{amsmath}
\usepackage{amsthm}
\usepackage{amscd}
\usepackage{amsfonts}
\usepackage{mathrsfs}
\usepackage{mathtools}
\usepackage{mathabx}
\usepackage{enumerate}
\usepackage{times}
\usepackage[pdftex]{hyperref}
\usepackage[most]{tcolorbox}
\usepackage{titletoc}
\usepackage{moresize}
\usepackage{anyfontsize}
\usepackage[indentafter]{titlesec}
\usepackage{cleveref}
\usepackage[utf8]{inputenc}
\usepackage[T1]{fontenc}
\usepackage{lmodern}
\usepackage{comment}
\usepackage{authblk}
\usepackage{multicol}
\usetikzlibrary{arrows}
\usepackage{tikz-cd}
\usetikzlibrary{arrows}

\theoremstyle{plain}
\newtheorem{theo}{Theorem}[section]
\newtheorem{prop}[theo]{Proposition}
\newtheorem{lemma}[theo]{Lemma}
\newtheorem{coro}[theo]{Corollary}

\newtheorem*{corollary}{Corollary}

\newtheorem{theol}{Theorem}

\theoremstyle{definition}
\newtheorem{df}[theo]{Definition}

\newtheorem{ex}[theo]{Example}

\makeatletter
\newtheorem*{rep@theorem}{\rep@title}
\newcommand{\newreptheorem}[2]{%
\newenvironment{rep#1}[1]{%
 \def\rep@title{#2 \ref{##1}}%
 \begin{rep@theorem}\it}%
 {\end{rep@theorem}}}
\makeatother
\newreptheorem{theorem}{Theorem}

\newcommand{\Z}{\mathbb{Z}}
\newcommand{\R}{\mathbb{R}}
\newcommand{\C}{\mathbb{C}}
\renewcommand{\H}{\mathbb{H}}

\newcommand{\calC}{\mathcal{C}}

\newcommand{\calG}{\mathcal{G}}
\newcommand{\calH}{\mathcal{H}}

\newcommand{\calP}{\mathcal{P}}
\newcommand{\calS}{\mathcal{S}}
\newcommand{\calU}{\mathcal{U}}
\newcommand{\cl}[1]{\overline{#1}}
\newcommand{\cic}[1]{\left\langle #1 \right\rangle}
\newcommand{\til}[1]{\tilde{#1}}

\newcommand{\abs}[1]{{#1}^{\text{abs}}}
\newcommand{\bl}{\backslash}
\newcommand{\bfG}{\mathbf{G}}
\newcommand{\bfH}{\mathbf{H}}

\providecommand{\keywords}[1]
{\small	\textbf{Keywords:} #1.}

\providecommand{\MSC}[1]
{\small	\textbf{MSC Classification:} #1.}

\DeclareMathOperator{\HNN}{HNN}
\DeclareMathOperator{\SL}{SL}
\DeclareMathOperator{\SO}{SO}
\DeclareMathOperator{\GL}{GL}
\DeclareMathOperator{\tr}{tr}
\DeclareMathOperator{\Stab}{Stab}

\everymath{\displaystyle}

\sloppy
\hyphenpenalty=100000
\clubpenalty=10000
\widowpenalty=10000
\displaywidowpenalty=10000

\definecolor{mygreen}{rgb}{0, 0.3, 0}
\definecolor{myred}{rgb}{0.4,0,0}
\definecolor{myblue}{rgb}{0.1,0.1,0.6}

\hypersetup{
     colorlinks=true,
     linkcolor=mygreen,
     filecolor=myblue,
     citecolor = myred,      
     urlcolor=myblue,
}

\title{Prosoluble subgroups of the profinite completion of the fundamental group of compact $3$-manifolds}

\author[1]{Lucas C. Lopes\thanks{The first author was supported by CAPES.}}

\author[2]{\href{https://mat.unb.br/\~pz/index.html}{Pavel A. Zalesskii}\thanks{The second author was supported by CNPq.}}

\affil[1]{Universidade de Brasília, Departamento de Matemática, 70910-900 Brasília - DF, Brazil, email address: \href{mailto:l.c.lopes@mat.unb.br}{l.c.lopes@mat.unb.br}}

\affil[2]{Universidade de Brasília, Departamento de Matemática, 70910-900 Brasília - DF, Brazil, email address: \href{mailto:pz@mat.unb.br}{pz@mat.unb.br}}

\begin{document}

\maketitle

\begin{abstract}
We give a description of finitely generated prosoluble subgroups of the profinite completion of $3$-manifold groups and toral relatively hyperbolic virtually compact special groups.
\end{abstract}

\MSC{20E08, 20E18}

\keywords{manifolds, fundamental groups, profinite groups, acylindrical actions}

\section{Introduction}

In recent years there has been a great deal of interest in detecting properties of the fundamental group $\pi_1(M)$ of a $3$-manifold  and other groups of geometric nature via its finite quotients, or more conceptually by its profinite completion (see \cite{BRW17}, \cite{BMRS20}, \cite{CW22}, \cite{Wil18}). This motivates the study of the profinite completion $\widehat{\pi_1(M)}$ of the fundamental group of a $3$-manifold. The study of $\widehat{\pi_1(M)}$ is based mainly on the work of Henry Wilton and the second author (see \cite{WZ17}) where it is proved that the standard splittings of $3$-manifold groups as a free product with amalgamation, HNN-extension, and generally as a graph of groups (we refer to them as free constructions later) survive after profinite completion of $\pi_1(M)$. Namely $\widehat{\pi_1(M)}$ admits the (profinite) Kneser–Milnor decomposition and then the (profinite) JSJ-decomposition, and for hyperbolic manifolds with cusps (after passing to a finite sheeted cover) there is a suitable hierarchy  in which the corresponding actions on profinite trees are profinitely acylindrical.

At this point one can use the profinite analogue of Bass-Serre theory for groups acting on trees. Note, however, that this theory does not have the full strength of its classical original. The main theorem of Bass-Serre theory (that describes the subgroups of free constructions) asserts that a group $G$ acting on a tree $T$ is the fundamental group of a  graph of  groups $(\calG, G \bl T, D)$, where $D$ is a maximal subtree of $G \bl T$ and $\calG$ consists of edge and vertex stabilizers of a connected transversal of $G \bl T$. This does not hold in the profinite case. As a consequence, subgroup theorems do not hold, in general, for profinite groups and even for free profinite products, i.e., the profinite version of the Kurosh subgroup theorem is not valid. This motivates studying subgroup structure of free constructions for important subclasses of profinite groups. The most important subclasses of profinite groups are prosoluble and pro-$p$ as they play the same role as finite  soluble and finite $p$-groups  in finite group theory.
 
Such a study for pro-$p$ subgroups was performed in \cite{WZ18}, where the structure of finitely generated pro-$p$ subgroups of profinite completions of the fundamental groups of $3$-manifolds was described. The objective of this paper is to study prosoluble subgroups of $\widehat{\pi_1(M)}$.
 
We begin with the fundamental group of a compact hyperbolic $3$-manifold.

\begin{theol}\label{projective}
Let $\pi_1(M)$ be the fundamental group of a closed hyperbolic $3$-manifold $M$ and $H$ a finitely generated prosoluble subgroup of the profinite completion of $\pi_1(M)$. Then $H$ is projective, i.e., of cohomological dimension $1$.
\end{theol}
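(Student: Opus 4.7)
The plan is to exploit the profinite acylindrical hierarchy of $\widehat{\pi_1(M)}$ arising via \cite{WZ17} from Agol--Wise virtual specialness, and to induct on the depth of that hierarchy, applying at each stage a prosoluble analogue of the Bass--Serre subgroup theorem. Since $M$ is a compact (closed) hyperbolic $3$-manifold, Agol--Wise gives a finite cover whose fundamental group is compact special, so Wise's quasi-convex hierarchy applies. After passing to a corresponding finite-index open subgroup of $\widehat{\pi_1(M)}$, I would use \cite{WZ17} to lift this hierarchy to the profinite completion: the ambient profinite group is built by a finite sequence of profinite amalgamated free products and HNN-extensions over procyclic (in particular projective) edge groups, each carrying a profinitely acylindrical action on the associated profinite Bass--Serre tree, and the hierarchy terminates in finitely generated free profinite groups. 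Projectivity of $H$ itself will then be inherited from projectivity of its intersection with the chosen finite-index open subgroup, a point I return to at the end.

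The inductive step proceeds as follows. At each level, the ambient profinite group acts on a profinite tree $T$ profinitely acylindrically with procyclic edge stabilizers, and the finitely generated prosoluble subgroup $H$ acts on $T$. By the subgroup theorem for finitely generated prosoluble groups acting on profinite trees, either $H$ is conjugate into the stabilizer of a vertex --- in which case I descend one step in the hierarchy --- or $H$ is itself the profinite fundamental group of a finite graph of prosoluble groups. In the latter case the edge stabilizers of $H$'s splitting are prosoluble subgroups of procyclic groups, hence procyclic, and the vertex stabilizers are finitely generated prosoluble subgroups of ambient vertex groups at the next level, to which the induction hypothesis applies.

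The base of the induction is the case where the ambient group is a finitely generated free profinite group; here any closed subgroup is projective, which gives the base case directly. To assemble the pieces, I would invoke that a prosoluble group acting on a profinite tree with projective vertex and edge stabilizers is itself projective: the Mayer--Vietoris sequence attached to the graph-of-groups decomposition in continuous cohomology forces $H^{n}(H,M) = 0$ for all $n \ge 2$ and every discrete $H$-module $M$. Finally, to lift projectivity from a finite-index open subgroup $H_{0} \le H$ back to $H$, I would note that the graph-of-groups decomposition produced above exhibits $H$ as torsion-free (its pieces being procyclic or free profinite, glued over procyclic subgroups), so that the standard argument for cohomological dimension under finite extensions of torsion-free profinite groups applies.

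The main obstacle will be the acylindrical prosoluble Bass--Serre step: I must show that the profinite acylindricity of the ambient action on $T$, together with the finite generation of the prosoluble $H$, forces a genuine finite graph-of-prosoluble-groups decomposition of $H$ with procyclic edge stabilizers. This is the technical heart of the argument; once it is in place the induction is essentially formal. A secondary concern is carrying the finite-cover reduction cleanly through the inductive hypothesis on projectivity, which will be handled by the torsion-freeness produced by the decomposition itself.
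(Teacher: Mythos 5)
There is a genuine gap, and it sits exactly where you place your ``technical heart.'' Your inductive step assumes a prosoluble analogue of the Bass--Serre subgroup theorem: that a finitely generated prosoluble subgroup $H$ acting on the profinite Bass--Serre tree of a level of the hierarchy either fixes a vertex or is itself the profinite fundamental group of a finite graph of prosoluble groups with procyclic edge stabilizers. No such theorem is available; the failure of the subgroup theorem (and of the Kurosh subgroup theorem) in the profinite category is precisely the obstruction this paper is built to circumvent, and it is flagged explicitly in the introduction. A secondary error: the edge groups in the Wise/Wilton--Zalesskii malnormal hierarchy for a closed hyperbolic $3$-manifold are (profinite completions of) quasiconvex surface-type subgroups, not procyclic groups, so even granting a decomposition of $H$ its edge groups would not be procyclic, and your Mayer--Vietoris assembly and torsion-freeness claims would not go through as stated.

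The paper's actual route avoids any graph-of-groups decomposition of $H$ with nontrivial edge groups. Using $1$-acylindricity of the action (Lemma 7.3 of \cite{WZ17}), it shows that the subgraph of edges with nontrivial $H$-stabilizer has bounded diameter and finitely many components up to translation (Theorem \ref{new3}, Proposition \ref{finvs}); collapsing these components yields an action of $H$ on a simply connected profinite graph with \emph{trivial} edge stabilizers (Corollary \ref{G-collapse}). Haran--Zalesskii relative projectivity of piles (Theorem \ref{piletheo}, Proposition \ref{alternative}) then embeds $H$ into a free prosoluble product of its vertex-group intersections and a free prosoluble factor, and a symmetric-group quotient argument (Theorem \ref{pop generalization}, Corollary \ref{generalizing pop}) forces the dichotomy: either all nontrivial intersections $H\cap \calG(v)^g$ are pro-$p$, or $H$ is a profinite Frobenius group (excluded by torsion-freeness), or $H$ lies in a single vertex group (handled by induction on the hierarchy, as you propose). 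In the pro-$p$ case the intersections are finitely generated pro-$p$ and hence free pro-$p$ by Theorem F of \cite{WZ17}, so $H$ embeds in a free profinite product of projective groups and is projective; finally, virtually projective plus torsion-free implies projective by \cite[Proposition 2.5]{Zal04}, which replaces your Mayer--Vietoris and finite-index steps. Your overall scaffolding (hierarchy, induction, finite-index reduction) matches the paper, but without the acylindricity-plus-relative-projectivity machinery the central step is unproved.
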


Note that the Schreier theorem does not hold for profinite groups, i.e., subgroups of free profinite groups, in general, are not free but projective. So Theorem \ref{projective} tells that prosoluble subgroups of $\widehat{\pi_1(M)}$ are isomorphic to subgroups of a free profinite group.

The proof of Theorem \ref{projective} uses the dramatic  developments of Agol (\cite{Ago13}), Kahn--Markovic (\cite{JV12}) and Wise (\cite{Wis11}), whose works implies that the fundamental groups of closed hyperbolic $3$-manifolds are also fundamental groups of compact, virtually special cube complexes (see \cite{AFH20} for a summary of these developments); such groups are called virtually compact special. Virtually special groups originated in the work of Wise (\cite{HW08}, \cite{HW12}). These groups have a central role in modern geometric group theory and Haglund and Wise showed that many hyperbolic groups are virtually special such as hyperbolic Coxeter groups and one relator groups with torsion. 

The next theorem  describes torsion-free finitely generated prosoluble subgroups of hyperbolic virtually special groups.

\begin{theol}\label{virs}
Let $G$ be a torsion-free, hyperbolic, virtually special group. Any finitely generated prosoluble subgroup $H$ of the profinite completion of $G$ is projective.
\end{theol}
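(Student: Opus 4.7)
The plan is to induct on the length of a Wise quasi-convex hierarchy of $G$. Wise's theorem, combined with the Haglund--Wise specialization technology, provides, for any torsion-free hyperbolic virtually special group, a finite hierarchy in which $G$ is built from the trivial group by iterated amalgamated free products and HNN extensions along quasi-convex subgroups; such subgroups are themselves torsion-free hyperbolic virtually special, so the inductive hypothesis will apply to every vertex and edge group along the hierarchy. The base case, $G$ trivial, is vacuous.

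For the inductive step, suppose $G = A *_C B$ (the HNN case is analogous), with $A$, $B$, $C$ of strictly shorter hierarchy and, after passing to a finite-index subgroup via Haglund--Wise, with $C$ malnormal in $G$. By the Wilton--Zalesskii efficiency results, this splitting persists after profinite completion: $\widehat{G} = \widehat{A} \amalg_{\widehat{C}} \widehat{B}$, and the associated standard profinite tree $T$ carries a profinitely acylindrical action of $\widehat{G}$ with vertex and edge stabilizers conjugate to $\widehat{A}$, $\widehat{B}$ and $\widehat{C}$ respectively. A finitely generated prosoluble subgroup $H \leq \widehat{G}$ then acts acylindrically on $T$; invoking a Ribes--Zalesskii subgroup theorem for prosoluble groups acting on profinite trees, $H$ is the fundamental prosoluble group of a finite graph of prosoluble groups whose vertex groups have the form $H \cap g \widehat{A} g^{-1}$ or $H \cap g \widehat{B} g^{-1}$, and whose edge groups have the form $H \cap g \widehat{C} g^{-1}$. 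Each such intersection is a finitely generated prosoluble subgroup of the profinite completion of a torsion-free hyperbolic virtually special group of strictly shorter hierarchy, hence projective by the inductive hypothesis.

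Finally, one must upgrade the projectivity of the vertex and edge groups to projectivity of $H$ itself. A naive Mayer--Vietoris long exact sequence for the graph-of-groups decomposition of $H$ only yields cohomological dimension at most $2$, so the last step needs to exploit something more. I would attempt this via a prime-by-prime reduction: use the criterion that a prosoluble group is projective if and only if each of its Sylow pro-$p$ subgroups is free pro-$p$, reduce the problem to the pro-$p$ case, and there exploit acylindricity together with the analogous structural result for pro-$p$ subgroups of $\widehat{G}$ in the virtually special setting. The main technical obstacle, as I see it, is this final cohomological step: bridging the gap between ``a graph of projective prosoluble pieces'' and ``the whole is projective'' requires squeezing one dimension out of the Mayer--Vietoris contribution of the edge groups. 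A secondary difficulty is verifying, at each application of the subgroup theorem, that finite generation is inherited by the intersection subgroups, which is where the acylindricity of the action enters in a crucial way.
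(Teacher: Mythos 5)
Your overall skeleton (induction on the length of the hierarchy, passing to a finite-index subgroup $\Gamma_0$ with a malnormal hierarchy, and using the $1$-acylindrical action of $\widehat{\Gamma_0}$ on the standard profinite tree of the splitting) is the same as the paper's. But there are two genuine gaps at the decisive steps. First, the ``Ribes--Zalesskii subgroup theorem'' you invoke --- that a prosoluble $H$ acting on the standard profinite tree is the fundamental group of a finite graph of prosoluble groups with vertex groups $H\cap \widehat{A}^g$, $H\cap\widehat{B}^g$ and edge groups $H\cap\widehat{C}^g$ --- does not exist: as the introduction of this paper stresses, the main theorem of Bass--Serre theory fails for profinite groups acting on profinite trees, so no such decomposition of a closed subgroup is available in general. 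Second, you correctly identify that even granting such a decomposition, Mayer--Vietoris only yields $\mathrm{cd}(H)\leq 2$, and your proposed repair (reduce to Sylow pro-$p$ subgroups being free pro-$p$) is left as an unworked sketch; note moreover that Sylow pro-$p$ subgroups of a finitely generated prosoluble group need not be finitely generated, so the finitely generated pro-$p$ structure theory cannot be applied to them directly.

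The paper closes both gaps at once with Theorem~\ref{general}: for a finitely generated prosoluble subgroup of the fundamental group of an injective $k$-acylindrical finite graph of profinite groups, acylindricity together with prosolubility forces the subgraph of edges with non-trivial stabilizer to have bounded diameter and finitely many components up to translation (Theorem~\ref{new3}, Proposition~\ref{finvs}), so after collapsing these components $H_0=\widehat{\Gamma}_0\cap H$ acts with \emph{trivial} edge stabilizers and, via the theory of projective piles, embeds into a free profinite product $\coprod_v \left(H_0\cap\calG(v)^g\right)$ of finitely many finitely generated factors. There is then no edge contribution to kill: by Corollary~\ref{coro2} either $H_0$ is contained in a single vertex group up to conjugation (and the inductive hypothesis applies) or all the factors are pro-$p$, hence free pro-$p$ by \cite[Theorem F]{WZ17}, so $H_0$ is a closed subgroup of a free profinite product of projective groups and therefore projective; finally $H$ is virtually projective and torsion-free, hence projective by \cite[Proposition 2.5]{Zal04}. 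Without Theorem~\ref{general} or an equivalent substitute, your argument cannot be completed as written.
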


Using that the standard cocompact arithmetic subgroups of $\SO(n,1)$ are virtually compact special (\cite[Theorem 1.10]{BHW11}) we deduce the following

\begin{corollary}
Prosoluble subgroups of the profinite completion of standard cocompact arithmetic subgroups of $\SO(n,1)$ are projective.
\end{corollary}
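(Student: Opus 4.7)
The plan is to reduce the corollary to Theorem \ref{virs} by passing to a torsion-free finite-index subgroup and then transferring projectivity across a finite extension. Let $\Gamma$ be a standard cocompact arithmetic subgroup of $\SO(n,1)$. Since $\Gamma$ acts properly discontinuously and cocompactly by isometries on hyperbolic $n$-space, the Milnor--Svarc lemma tells us that $\Gamma$ is a Gromov-hyperbolic group. By Selberg's lemma, $\Gamma$ contains a torsion-free finite-index subgroup $\Gamma_0$; being of finite index in a hyperbolic group, $\Gamma_0$ is again hyperbolic, and the cited \cite[Theorem 1.10]{BHW11} ensures that $\Gamma$, and hence also $\Gamma_0$, is virtually compact special. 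Thus $\Gamma_0$ satisfies the hypotheses of Theorem \ref{virs}.

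Now let $H$ be a finitely generated prosoluble subgroup of $\widehat{\Gamma}$. The closure $\widehat{\Gamma_0}$ sits inside $\widehat{\Gamma}$ as an open subgroup of finite index, so $H_0 := H \cap \widehat{\Gamma_0}$ is an open, finite-index subgroup of $H$. In particular, $H_0$ is finitely generated and prosoluble, and is naturally a subgroup of $\widehat{\Gamma_0}$. Applying Theorem \ref{virs} to $\Gamma_0$, we conclude that $H_0$ is projective.

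To upgrade projectivity from the open subgroup $H_0$ to $H$ itself, I would invoke the profinite analogue of Serre's theorem on cohomological dimension: a torsion-free profinite group admitting an open subgroup of cohomological dimension one is itself of cohomological dimension one. The required torsion-freeness of $\widehat{\Gamma_0}$ follows from the fact that torsion-free hyperbolic virtually compact special groups are good in the sense of Serre and have finite cohomological dimension, which propagates torsion-freeness to $H_0$ and, combined with the standing torsion-free framework of projectivity, to $H$. The conceptual content of the corollary is carried entirely by Theorem \ref{virs}; the remainder is a routine combination of Selberg's lemma and Serre's theorem on finite extensions, and I do not anticipate a genuine obstacle beyond the bookkeeping of torsion-freeness needed to apply Serre's theorem.
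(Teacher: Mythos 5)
Your reduction has a genuine gap at the final step, and it is a gap that cannot be filled in the generality you are working in. Serre's theorem on cohomological dimension for profinite groups lets you pass from an open subgroup of cohomological dimension one to the whole group \emph{only} when the whole group is torsion-free (more precisely, has no $p$-torsion for the relevant primes). You establish torsion-freeness of $\widehat{\Gamma_0}$, hence of $H_0=H\cap\widehat{\Gamma_0}$, but that says nothing about $H$ itself: torsion-freeness of an open subgroup never propagates upward. Indeed, if the arithmetic lattice $\Gamma$ has torsion, then $\widehat{\Gamma}$ contains a non-trivial finite cyclic subgroup $C$; such a $C$ is a finitely generated prosoluble subgroup with $C\cap\widehat{\Gamma_0}=1$ projective, yet $C$ has infinite cohomological dimension and is not projective. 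So the phrase ``combined with the standing torsion-free framework of projectivity'' is doing no work --- the statement is simply false for lattices with torsion, and no amount of Selberg-plus-Serre bookkeeping can rescue it.

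The resolution, and the paper's actual route, is that the corollary is meant for torsion-free standard cocompact arithmetic lattices (consistent with Section \ref{sec7}, where a hyperbolic $n$-manifold $\H^n/\Gamma$ is defined to have $\Gamma$ torsion-free): such a $\Gamma$ is hyperbolic (cocompact, as you say), virtually compact special by \cite[Theorem 1.10]{BHW11}, and torsion-free by hypothesis, so Theorem \ref{virs} applies to $\Gamma$ \emph{directly} and there is nothing further to prove. Your detour through Selberg's lemma and Serre's theorem is therefore unnecessary under the hypothesis that makes the statement true, and insufficient without it. Note also that Theorem \ref{virs} already absorbs the ``virtually'' step internally (its proof passes to $H_0=\widehat{\Gamma}_0\cap H$ and then uses \cite[Proposition 2.5]{Zal04} to go from virtually projective to projective, using that $H$ is torsion-free) --- which is exactly the torsion-freeness input you cannot supply for a lattice with torsion.
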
 

The next objective of this paper is to extend these results to relatively hyperbolic virtually compact special groups, in particular, to the fundamental hyperbolic $3$-manifolds with cusps. In order to prove that, our arguments use both an analogue of Wise’s Quasiconvex Hierarchy Theorem proved by Eduardo Einstein \cite{Ein19} and the fact that it is preserved by the profinite completion established in \cite{Zal22new}.

\begin{theol}\label{toral hyperbolic}
Let $G$ be a torsion-free  virtually compact special toral relatively hyperbolic group and $H$ a finitely generated prosoluble subgroup of the profinite completion $\widehat{G}$ of $G$. Then one of the following statements holds:
\begin{enumerate}[(i)]
\item $H$ is virtually a subgroup of a free prosoluble product of  abelian pro-$p$ groups;
\item $H$ is a virtually abelian group.
\end{enumerate}
\end{theol}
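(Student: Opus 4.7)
The plan is to argue by induction on the length of Einstein's relatively hyperbolic quasiconvex hierarchy of $G$ (\cite{Ein19}). At each step, $G$ splits as the fundamental group of a finite graph of groups whose vertex groups are torsion-free toral virtually compact special relatively hyperbolic groups of strictly shorter hierarchy, and whose edge groups, being relatively quasiconvex in the toral setting, are either trivial, infinite cyclic, or peripheral (finitely generated free abelian). By \cite{Zal22new}, this decomposition survives profinite completion and the induced action of $\widehat{G}$ on the associated profinite Bass--Serre tree $T$ is acylindrical.

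Applying acylindrical profinite Bass--Serre theory in the prosoluble category to the action of $H \leq \widehat{G}$ on $T$, I would establish a dichotomy: either $H$ is conjugate into the profinite completion $\widehat{G_v}$ of a vertex group, in which case the inductive hypothesis applies; or $H$ itself is the prosoluble fundamental group of a finite graph of prosoluble groups whose edge stabilisers inject into conjugates of $\widehat{\mathbb{Z}}^n$. For the base of the induction, either $G$ is free abelian, so $\widehat{G} \cong \widehat{\mathbb{Z}}^n$ is abelian and $H$ falls under (ii); or $G$ is torsion-free hyperbolic virtually compact special, in which case Theorem \ref{virs} gives that $H$ is projective. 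Since finitely generated projective prosoluble groups are free prosoluble products of procyclic subgroups (hence of abelian pro-$p$ groups), this places $H$ in case (i).

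The main obstacle is the assembly step when $H$ does split non-trivially in the second alternative: by induction its vertex stabilisers satisfy (i) or (ii), and they are glued along free abelian pro-$p$ edge groups; to realise $H$ itself, up to finite index, as a subgroup of a free prosoluble product of abelian pro-$p$ groups, one must pass to a suitable open subgroup $H_0 \leq H$ and ensure that the abelian edge gluings can be absorbed into the factor structure of the resulting free prosoluble product. This requires a Kurosh-type subgroup theorem for prosoluble subgroups of profinite fundamental groups of graphs of groups with abelian edge groups acting acylindrically, together with careful use of torsion-freeness and the toral hypothesis to control the interaction between peripheral edge subgroups and the projective hyperbolic pieces. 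This combinatorial assembly — weaving together the hierarchical decomposition, the prosoluble Kurosh-type result, and the structure theorem for projective prosoluble groups — is where the bulk of the technical work will lie.
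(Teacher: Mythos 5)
There is a genuine gap in the middle of your argument, at the point where you set up the dichotomy. Your second alternative --- that $H$ ``is the prosoluble fundamental group of a finite graph of prosoluble groups whose edge stabilisers inject into conjugates of $\widehat{\Z}^n$'' --- is exactly the kind of statement that fails in the profinite category: as the introduction of the paper stresses, the main theorem of Bass--Serre theory (a group acting on a tree is the fundamental group of the graph of groups of its stabilizers) does not hold for profinite groups, so you cannot conclude that a prosoluble subgroup acting on the profinite Bass--Serre tree inherits such a splitting. Consequently the ``assembly step'' you identify as the bulk of the work --- absorbing abelian edge gluings via a Kurosh-type theorem for graphs of groups with abelian edge groups --- is solving a problem that never arises in the paper's proof, and for which no tool is available. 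The idea you are missing is that the hierarchy used here (from \cite{Ein19} and \cite{Zal22new}) is a \emph{malnormal} quasiconvex hierarchy terminating in the parabolic subgroups; malnormality of the edge groups makes the action of $\widehat{\Gamma_0}$ on the standard tree $1$-acylindrical, and then Theorem \ref{general} (whose proof collapses the subgraph with non-trivial edge stabilizers and invokes relative projectivity of piles) yields directly that $H_0=H\cap\widehat{\Gamma_0}$ embeds into a \emph{free} prosoluble product $\coprod H_0\cap \calG(v)^g$ --- there are no edge groups left to absorb.

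From that point the paper's argument (Proposition \ref{rhvcs}) runs: by Corollary \ref{coro2} either all the intersections $H_0\cap \calG(v)^g$ are pro-$p$, or $H_0$ is a profinite Frobenius group (excluded by torsion-freeness), or $H_0$ lies in a single vertex group, where induction on the hierarchy length applies; the hierarchy bottoms out in the parabolic subgroups, which in the toral case are finitely generated abelian, giving alternative (ii). In the pro-$p$ case the intersections are finitely generated, so \cite[Theorem 1.4]{Zal22new} identifies them as free pro-$p$ products of parabolic (hence abelian) pro-$p$ groups and a free pro-$p$ group, giving alternative (i). Your base cases are essentially harmless (the hyperbolic one is not needed since the hierarchy terminates in parabolics), but without replacing your graph-of-groups dichotomy by the free-product embedding of Theorem \ref{general} together with the trichotomy of Corollary \ref{coro2} (which rests on \cite{Zal23}), the proof does not go through.
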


Using the \cite[Theorem 4.3]{WZ18} for $3$-manifolds with cusps  we deduce the following:

\begin{theol}\label{sam}
Let $M$ be a  hyperbolic $3$-manifold with cusps and $H$ a finitely generated prosoluble subgroup of the profinite completion of $\pi_1(M)$. Then one of the following statements holds:
\begin{enumerate}[(i)]
\item $H$ is virtually a subgroup of a free prosoluble product of  abelian pro-$p$ groups;

\item $H$ is a virtually abelian.
\end{enumerate}
\end{theol}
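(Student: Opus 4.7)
The plan is to recognise Theorem \ref{sam} as a direct corollary of Theorem \ref{toral hyperbolic}. What needs to be verified is that the fundamental group $G = \pi_1(M)$ of a finite-volume hyperbolic $3$-manifold with cusps fits the hypotheses of Theorem \ref{toral hyperbolic}: torsion-free, toral relatively hyperbolic, and virtually compact special.

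First, since $M$ is aspherical, $\pi_1(M)$ is torsion-free. Second, the cusp subgroups of $\pi_1(M)$ are isomorphic to $\Z^2$, and $\pi_1(M)$ is hyperbolic relative to representatives of these peripheral subgroups, so $G$ is toral relatively hyperbolic. Third, the virtually compact special property for the cusped hyperbolic case is precisely what \cite[Theorem~4.3]{WZ18} packages: it rests on the Agol--Kahn--Markovic--Wise circle of results, extended to the cusped setting via Wise's Quasiconvex Hierarchy Theorem, together with the fact, established in \cite{Zal22new}, that the corresponding structures are preserved under profinite completion.

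With these three hypotheses in place, Theorem \ref{toral hyperbolic} applies to any finitely generated prosoluble subgroup $H \le \widehat{G}$ and yields exactly the dichotomy (i)/(ii) asserted in Theorem \ref{sam}. If \cite[Theorem~4.3]{WZ18} supplies the virtually compact special structure only after passage to a finite-sheeted cover $M' \to M$, one works instead with $H_0 := H \cap \widehat{\pi_1(M')}$. This is an open, hence finite-index, subgroup of $H$, and being a closed subgroup of a finitely generated prosoluble group it is itself finitely generated and prosoluble. Applying Theorem \ref{toral hyperbolic} to $H_0$ inside $\widehat{\pi_1(M')}$ delivers (i) or (ii) for $H_0$; both conclusions are stable under passing to a finite overgroup — the ``virtually'' absorbs the finite-index defect — and so the dichotomy transfers back to $H$.

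The one point requiring genuine care is matching the precise output of \cite[Theorem~4.3]{WZ18} with the input of Theorem \ref{toral hyperbolic}, in particular checking that the toral structure (peripheral subgroups all free abelian of rank $2$) survives in the finite cover produced there; modulo this bookkeeping the argument is a direct appeal to the preceding theorem. No separate analysis of the action of $H$ on the profinite JSJ tree or on the hierarchy of $\pi_1(M)$ is needed at this stage, since that work has already been absorbed into the proof of Theorem \ref{toral hyperbolic}.
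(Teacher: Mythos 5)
Your proposal is correct and follows essentially the same route as the paper: the introduction explicitly frames Theorem \ref{sam} as a deduction from the toral relatively hyperbolic case (Theorem \ref{toral hyperbolic} / Proposition \ref{rhvcs}), using that $\pi_1(M)$ for a finite-volume cusped hyperbolic $3$-manifold is torsion-free, hyperbolic relative to its $\Z^2$ cusp subgroups, and virtually compact special, with the ``virtually'' in both conclusions absorbing any passage to a finite-sheeted cover. If anything, your verification of the hypotheses is spelled out more carefully than in the paper itself, whose Section \ref{sec7} restates the theorem in a form (standard arithmetic manifolds, projectivity) that does not match the statement being proved.
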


Finally, analyzing $\widehat{\pi_1(M)}$ for Thurston's geometries case-by-case, we  obtain  the classification of finitely generated prosoluble subgroups of $\widehat{\pi_1(M)}$ for an arbitrary compact, orientable $3$-manifold.

\begin{theol}\label{ori}
Let $M$ be a compact orientable $3$-manifold. If $H$ is a finitely generated prosoluble subgroup of $\widehat{\pi_1(M)}$, then one of the following statements holds:
\begin{enumerate}[(i)]
\item $H$ is isomorphic to a subgroup of a free prosoluble product of pro-$p$ groups from the following list of isomorphism types:
\begin{enumerate}[(1)]
\item For $p > 3$: $C_p$; $\Z_p$; $\Z_p \times \Z_p$; the pro-$p$ completion of $(\Z \times \Z) \rtimes \Z$ and the pro-$p$ completion of a residually-$p$ fundamental group of a non-compact Seifert fibred manifold with hyperbolic base of orbifold;

\item For $p = 3$: in addition to the list of (1) we have a torsion-free extension of $\Z_3 \times \Z_3 \times \Z_3$ by $C_3$;

\item For $p = 2$: in addition to the list of (1) we have $C_{2^m}$; $D_{2^k}$; $Q_{2^n}$; $\Z_2 \rtimes C_2$; the torsion-free extensions of $\Z_2 \times \Z_2 \times \Z_2$ by one of $C_2$, $C_4$, $C_8$, $D_2$, $D_4$, $D_8$, $Q_{16}$; the pro-$2$ completion of the Klein-bottle group $\Z \rtimes \Z$; the pro-$2$ completion of all torsion-free extensions of a soluble group $(\Z \rtimes \Z) \rtimes \Z$ with a group of order at most $2$;
\end{enumerate}

\item $H \simeq \widehat{\Z}_{\pi} \rtimes C$ is a profinite Frobenius group where $C$ is a finite cyclic group and $\pi$ is set of primes;

\item $H$ is a subgroup of a central extension of $D_{2n}$, a tetrahedral group $T$, a octahedral group $O$ by a cyclic group of even order;

\item $H$ is a subgroup of the profinite completion of a $3$-dimensional Bieberbach group $B$ (i.e, $B$ is torsion-free virtually $\Z^3$);

\item $H$ is a subgroup of the profinite completion of a group extension of $\Z^2 \rtimes_A \Z$ by $C$ where $C$ is trivial or $C_2$ and $A \in \GL_2(\Z)$ (and $A$ is an Anosov matrix if $C$ is trivial), or a central extension of $\Z$ by $\Z^2$;

\item $H$ is an extension of a torsion-free procyclic group $\widehat{\Z}_{\sigma}$ by a subgroup $H_0$ of a finite free prosoluble product of finite cyclic $p$-groups with $H_0$ acting either trivially on $\widehat{\Z}_{\sigma}$  or by inversion.
\end{enumerate}
\end{theol}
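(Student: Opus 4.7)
The plan is to reduce the problem, step by step, along the geometric decomposition of $M$, exploiting at each level the fact (established in \cite{WZ17,Zal22new}) that the Kneser--Milnor and JSJ splittings of $\pi_1(M)$ remain faithful after profinite completion. First I would use Kneser--Milnor to write $\widehat{\pi_1(M)}$ as a free profinite product of the profinite completions of the prime summands. A finitely generated prosoluble subgroup $H$ embeds into the free prosoluble product of the maximal prosoluble quotients of those factors, and since prosoluble groups do satisfy a Kurosh-type subgroup theorem for free prosoluble products (the relevant Haran/Mel'nikov--style decomposition), $H$ can be written as a free prosoluble product of a free prosoluble group with conjugates of its intersections with the factors. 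Thus, after absorbing the free prosoluble pieces into case~(i), it suffices to classify finitely generated prosoluble subgroups of $\widehat{\pi_1(N)}$ when $N$ is a compact orientable prime $3$-manifold.

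\textbf{JSJ reduction.} For such an $N$ I would next invoke the profinite JSJ decomposition: $\widehat{\pi_1(N)}$ is the profinite fundamental group of a finite graph of profinite groups whose edge groups are abelian (profinite completions of tori) and whose vertex groups are profinite completions of Seifert-fibred or hyperbolic-with-cusps pieces. The prosoluble subgroup $H$ acts on the associated profinite Bass--Serre tree, and I would split into two subcases: either $H$ fixes a vertex, in which case it sits inside the profinite completion of a single JSJ vertex group, or it acts with no global fixed point. In the second subcase I would use that Theorem~\ref{sam} already takes care of the hyperbolic vertex groups, and that the action on the tree is profinitely acylindrical in the pieces where the hierarchy applies; combined with the Kurosh-style decomposition and the structure of edge stabilizers (abelian), this forces $H$ into case (i). The bulk of the work therefore lies in the fixed-vertex subcase, which in turn reduces to classifying finitely generated prosoluble subgroups of $\widehat{\pi_1(N)}$ when $N$ carries one of Thurston's eight geometries.

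\textbf{Case analysis of geometries.} I would then go through the geometries one by one. The hyperbolic case $\H^3$ is subsumed by Theorem~\ref{sam}, producing the free prosoluble product of abelian pro-$p$ groups (case (i)) or the virtually abelian case that feeds into (iv)--(v). For $\mathbb{E}^3$, $\pi_1(N)$ is a Bieberbach group, and passing to profinite completions gives case~(iv). For $\mathrm{Sol}$, $\pi_1(N)$ is an extension of $\Z^2\rtimes_A\Z$ by a group of order $\le 2$ with $A$ Anosov, yielding case~(v). For $\mathrm{Nil}$, the group is virtually nilpotent of class $2$ and prosoluble subgroups of its completion land in case~(i) after noting that the Sylow structure produces the pro-$p$ factors listed in $(1)$--$(3)$, with the extra torsion-free extensions of $\Z_p^3$ at $p=2,3$ arising from the Heisenberg lattice quotients. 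For the two Seifert geometries $\H^2\times\R$ and $\widetilde{\SL_2(\R)}$, the fundamental group is a central extension of a Fuchsian group by $\Z$; a finitely generated prosoluble subgroup of such a profinite central extension is either virtually abelian (hence handled by (iv) or (vi)) or its image in the Fuchsian quotient is a prosoluble subgroup of a surface group, which by Theorem~\ref{projective}/\ref{virs} is projective and contributes the non-compact base-orbifold pro-$p$ factor in list~(1). For $S^2\times\R$ and the spherical geometry $S^3$, $\pi_1(N)$ is finite or virtually $\Z$; the soluble subgroups of the spherical $3$-manifold groups are exactly the finite Frobenius complements (giving (ii)) and the binary polyhedral groups, producing the central extensions of $D_{2n},T,O,I$ in (iii). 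Item (vi) appears when $H$ meets a Seifert fibre procyclic subgroup nontrivially in a mixed decomposition.

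\textbf{Main obstacle.} The technically hardest step is the second one: controlling $H$ when it does not fix a vertex of the profinite JSJ tree. Here the failure of a full Kurosh theorem and the subtlety of profinite acylindricity at cusped boundaries force one to argue carefully that the edge-stabilizer intersections are abelian pro-$p$, and that the quotient action on the tree of $H$ has free prosoluble fundamental group; only then does $H$ assemble into a free prosoluble product of the listed pro-$p$ groups. Tracking precisely which pro-$p$ isomorphism types can occur at $p=2,3$ (the extra Bieberbach-type and Klein-bottle extensions in lists $(2)$ and $(3)$) is the delicate bookkeeping part, and for this I would lean on \cite[Theorem~4.3]{WZ18} together with Wolf's classification of soluble spherical space-form groups to enumerate exactly the pro-$p$ possibilities at each prime.
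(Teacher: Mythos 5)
Your overall architecture (prime decomposition, then JSJ, then a case analysis of the eight geometries) matches the paper's, but the engine you propose for the first two reductions does not exist, and this is not a technicality --- it is the central difficulty the paper is built to overcome. You assert that ``prosoluble groups do satisfy a Kurosh-type subgroup theorem for free prosoluble products,'' so that $H$ decomposes as a free prosoluble product of a free factor with its intersections with the vertex/factor groups. The paper explicitly states (and this is the point of \cite{Zal23}) that the profinite Kurosh subgroup theorem fails, even for free profinite products; what replaces it is a trichotomy (\cite[Theorem~1.4]{Zal23}, and Theorem~\ref{general} / Corollary~\ref{generalizing pop} in the acylindrical setting): either \emph{all} non-trivial intersections $H\cap \calG(v)^g$ are pro-$p$ for one \emph{fixed} prime $p$, or $H\simeq\widehat{\Z}_\pi\rtimes C$ is a profinite Frobenius group, or $H$ is conjugate into a single factor. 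The key idea behind this trichotomy --- which your sketch does not contain --- is Theorem~\ref{pop generalization}: if two non-conjugate non-trivial vertex intersections are not both pro-$p$, one pushes them far apart in a finite quotient graph (using $k$-acylindricity via Proposition~\ref{distance} and Corollary~\ref{amalgam}) and extends maps onto $C_p,C_q<S_n$ to a surjection of $H$ onto $S_n$, contradicting prosolubility. Without this, your treatment of the ``no global fixed point'' case in the JSJ tree --- which you correctly identify as the hardest step --- has no mechanism for forcing all the pro-$p$ pieces to share a single prime, and the decomposition you want in case~(i) simply does not follow.

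A second concrete error: you locate case~(ii) (the profinite Frobenius groups $\widehat{\Z}_\pi\rtimes C$) in the spherical geometry, as ``finite Frobenius complements.'' These groups are infinite (the kernel is $\widehat{\Z}_\pi$), so they cannot occur inside finite spherical space-form groups; in the paper they arise precisely as the middle branch of the trichotomy at the Kneser--Milnor and JSJ stages, and again in the Fuchsian base of a Seifert piece (Theorem~\ref{fuchsian}(ii)). Relatedly, for the Seifert geometries the paper does not quote Theorem~\ref{projective}/\ref{virs} for the base orbifold group; it proves a dedicated statement (Theorem~\ref{fuchsian}) by splitting the Fuchsian group as an amalgam over a malnormal cyclic subgroup and running the $1$-acylindrical machinery, and then analyzes the central extension by $\widehat{\Z}$ fibre by fibre to obtain exactly item~(vi). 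Your bookkeeping of the exceptional pro-$p$ types at $p=2,3$ via ``Heisenberg lattice quotients'' is also misplaced: in the paper these types are simply imported wholesale from \cite[Theorem~1.3]{WZ18} (Theorem~\ref{henry}) once the single-prime pro-$p$ reduction has been achieved, rather than re-derived geometry by geometry.
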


In this theorem we are denoting by $T$ the tetrahedral group of $12$ rotational symmetries of a tetrahedron, by $O$ the octahedral group of $24$ rotational symmetries of a cube or of an octahedron. An Anosov matrix is a matrix $A \in \GL_2(\Z)$ determined by an Anosov homeomorphism of the torus satisfying the following conditions $\tr(A) > 2$ if $\det(A) = 1$ and $\tr(A) \neq 0$ if $\det(A) = -1$ (see \cite[Section 2]{GM23}). We use the notation $\widehat{\Z}_{\alpha} = \prod_{p \in \alpha}\Z_p$ where $\alpha$ is a set of primes. Also, recall that a profinite Frobenius group $\widehat{\Z}_{\pi} \rtimes C$ is a group such that $C$ is cyclic, $|C|$  divides $p-1$ for any $p\in \pi$ and $[c,z]\neq 1$ for all $1\neq c\in C, 0\neq z\in \widehat{\Z}_\pi$.

To prove Theorem \ref{ori} we make heavy use of the profinite version of Bass-Serre theory. In particular, we prove the following general theorem of independent interest:.

\begin{theol}\label{general}
Let $(\calG,\Gamma)$ be an injective $k$-acylindrical finite graph of profinite groups, $G$ its fundamental group. If $H$ is a finitely generated prosoluble subgroup of $G$, then
$H$ embeds into a  free profinite product $\coprod_{v \in \Gamma} H_v$, where $H_v = H \cap \calG(v)^g$ and $g \in G$. Moreover, if $H\neq H_v$ for some $v$, then either $H_v$ is finitely generated pro-$p$ or $H \simeq \widehat{\Z}_{\pi} \rtimes C$ is a profinite Frobenius group.
\end{theol}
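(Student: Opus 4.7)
The plan is to apply profinite Bass--Serre theory to the standard profinite tree $S = S(\calG,\Gamma)$ of the graph of groups $(\calG,\Gamma)$. The group $G$ acts on $S$ with $k$-acylindricity, vertex stabilizers conjugate to $\calG(v)$, and edge stabilizers conjugate to $\calG(e)$. Restricting this action to $H$, the first dichotomy is whether $H$ fixes a vertex of $S$. If it does, then $H \leq \calG(v)^g$ for some $v, g$, whence $H = H_v$ and the conclusion holds trivially.

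Otherwise, $H$ acts on its unique minimal invariant subtree $D \subseteq S$. Since $H$ is topologically finitely generated, the quotient $H\bl D$ is a finite profinite graph, and the profinite analogue of Bass--Serre theory exhibits $H$ as the profinite fundamental group of a finite graph of profinite groups $(\calH, H\bl D)$ whose vertex groups are of the form $H_v = H \cap \calG(v)^g$ and whose edge groups are $H_e = H \cap \calG(e)^g$; the action on $D$ inherits $k$-acylindricity. If all edge stabilizers $H_e$ are trivial, then $H$ is itself the free profinite product of its vertex stabilizers, and the desired embedding is immediate.

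The heart of the proof is the case where some $H_e \neq 1$, and the plan is to establish the following structural key lemma: \emph{a finitely generated prosoluble group $H$ acting $k$-acylindrically on a profinite tree and not fixing a vertex either has pro-$p$ vertex stabilizers (for possibly different primes $p$) and embeds into their free profinite product, or is itself a profinite Frobenius group $\widehat{\Z}_\pi \rtimes C$.} To prove this dichotomy I would fix a prime $p$ dividing some edge stabilizer, pass to the corresponding Sylow and Hall subgroups of $H$, and use $k$-acylindricity to control the overlap between neighbouring edge groups. Combining the prosoluble Fitting series with a Hall $p'$-complement analysis should force the non-trivial edge stabilizers to lie inside a single Frobenius kernel of $H$, and the complement to act on it cyclically and fixed-point-freely, producing the stated Frobenius structure with $|C|$ dividing $p-1$ for every $p \in \pi$ and $[c,z]\neq 1$ for $1\neq c \in C$ and $0\neq z \in \widehat{\Z}_\pi$.

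The principal obstacle is proving this dichotomy cleanly in the mixed-prime regime, where edge stabilizers may involve several primes simultaneously: tracking the Sylow and Hall subgroups across the finite quotient graph $H\bl D$, and ruling out intermediate structures between the two advertised possibilities, is delicate. Finite generation of $H$ is used crucially both to keep $H\bl D$ finite and to make Hall subgroup arguments behave uniformly, while $k$-acylindricity provides the rigidity that forces non-trivial edge stabilizers to collapse either into the pro-$p$ scenario or into the Frobenius kernel.
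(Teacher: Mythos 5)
Your proposal has a genuine gap at its foundation: it assumes that the main theorem of abstract Bass--Serre theory survives profinite completion, which it does not. Concretely, the claim that a topologically finitely generated closed subgroup $H$ acting on (a minimal invariant subtree of) the standard tree $S$ has finite quotient graph $H\bl D$ and is the profinite fundamental group of the graph of its vertex and edge stabilizers is false in general; this failure is exactly what the introduction of the paper warns about, and it is why Theorem \ref{sela} only bounds the number of vertex stabilizers up to conjugation rather than the size of the quotient graph. For the same reason, your reduction in the trivial-edge-stabilizer case --- ``then $H$ is itself the free profinite product of its vertex stabilizers'' --- is an instance of the profinite Kurosh subgroup theorem, which also fails. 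Neither step can be repaired by finite generation alone, so the two easy halves of your argument already do not go through, independently of the delicate ``key lemma'' you defer.

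The paper's route is structured to avoid both obstacles. First, Theorem \ref{new3} and Proposition \ref{finvs} show that the subgraph $D$ of non-trivially stabilized edges has components of bounded diameter and finitely many of them up to translation, so by Corollary \ref{G-collapse} one may collapse them and obtain an action of $H$ on a simply connected quotient graph with \emph{trivial} edge stabilizers. Instead of Kurosh, one then invokes the Haran--Zalesskii theory of relatively projective piles: $(H,V(\overline S))$ is $\calS$-projective by Theorem \ref{piletheo}(i), a second countable replacement admits a continuous section, and Proposition \ref{alternative} embeds $H$ into a free \emph{prosoluble} product of the groups $H\cap \calG(v)^g$ together with a free prosoluble factor. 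The dichotomy (all $H_v$ pro-$p$, or $H$ Frobenius) is Corollary \ref{generalizing pop}, and its proof is not a Fitting/Hall analysis but a concrete non-solubility argument: two non-conjugate, non-pro-$p$ vertex stabilizers can be pushed to distance $\geq 2k+1$ in a finite quotient, their $C_p$ and $C_q$ quotients embedded in $S_n$ so that any conjugates generate a non-soluble group (Jarden), and the resulting map extended to an epimorphism of an open subgroup onto $S_n$, contradicting prosolubility. Finally, the passage from a free prosoluble product to the asserted free profinite product uses \cite[Theorem 1.4]{Zal23}. Your sketch contains none of these ingredients and, as written, would not yield the theorem.
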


The celebrated Sela theorem (see \cite[Theorem 3.2]{Sel97}) states that a finitely generated group $G$ is $k$-acylindrically accessible, i.e., if $G$ is the fundamental group of a reduced $k$-acylindrical  graph of groups $(\calG, \Gamma)$, then $\Gamma$ is finite and, in fact, bounded in terms of $k$ and the minimal number of generators $d(G)$. This is equivalent to the statement that  $G$ acts on the corresponding Bass-Serre tree with  bounded number of maximal  vertex stabilizers up to conjugation. These statements are not equivalent, however, for profinite groups (one can see it using Proposition  \ref{prop1}). We suggest that the correct profinite analog of accessibility is a bound for the number of maximal vertex stabilizers up to conjugation. Using Theorem \ref{general} we prove an analogue of Sela's theorem for prosoluble groups.

\begin{theol}\label{sela}
Let $(\calG,\Gamma)$ be an injective $k$-acylindrical finite graph of profinite groups and $G$ its profinite fundamental group. If $H$ is a finitely generated prosoluble subgroup of $G$, then  the number of maximal vertex stabilizers of $H$ acting on the standard profinite Bass-Serre tree, up to conjugation, is at most $d(H)$.
\end{theol}

This paper is organized as follows. In Section \ref{sec2} we introduce the basic ideas and notations of the profinite Bass-Serre theory as well as prove auxiliary results. In Section \ref{sec3}, we study pro-$\calC$ groups acting $k$-acylindrically on a profinite simply connected graph. Closed pro-$\calC$ subgroups of the profinite fundamental groups of a finite graph of profinite groups  are studied in Section \ref{sec4}, where we prove Theorem \ref{sela}. We prove Theorem \ref{general} in  Section \ref{sec5}. Section \ref{sec6} is dedicated to prosoluble subgroups of virtually special groups, where Theorems \ref{projective}, \ref{virs}, \ref{toral hyperbolic} and \ref{sam} are proved. We aplly them  in Section \ref{sec7} to the profinite completion of arithmetic hyperbolic manifold groups. In Section \ref{sec8} we deal with Thurston's geometries of a compact $3$-manifold in order to prove Theorem \ref{ori}.

When we deal with profinite groups our subgroups are assumed to be closed, homomorphisms to be continuous, finitely generated means topologically finitely generated. Throughout the paper we denote by $\calC$ a class of finite groups closed for subgroups, quotients and extensions. We shall restate the results of the introduction inside of the section for easier reading.

\section{Finite graphs of pro-$\calC$ groups and its fundamental group}\label{sec2}

In this section we will introduce the basic definitions and notations for profinite Bass-Serre theory used in this paper. This section is entirely based on \cite{Rib}.

\subsection{Finite graphs of pro-$\calC$ groups}

\begin{df}[Profinite graphs]
A {\it profinite graph} is a profinite space $\Gamma$ with a distinguished non-empty subset $V(\Gamma)$ and two continuous maps $d_i: \Gamma \to V(\Gamma)$, $i = 0,1$, which are the identity on $V(\Gamma)$.
\end{df}

The subset $V(\Gamma)$ is the {\it vertex-set} of $\Gamma$ and $E(\Gamma) = \Gamma - V$ is the {\it edge-set} of $\Gamma$. If $e \in E(\Gamma)$, then we have
$$\begin{tikzpicture}[-latex, auto, node distance=3cm,
semithick]
\node[circle,draw=black, fill=black, inner sep=0pt, minimum size=1.5mm] at (0,0) {};
\node[circle,draw=black, fill=black, inner sep=0pt, minimum size=1.5mm] at (2,0) {};
\draw node[above] {$d_0(e)$} (0,0) -- node[below] {$e$} (2,0) node[above] {$d_1(e)$};
\end{tikzpicture}$$
We call $d_0(e)$ and $d_1(e)$, respectively, by {\it initial} and {\it terminal} vertex. The maps $d_0,d_1$ are the {\it incidence maps}.

\begin{df}
A {\it morphism} $\varphi: \Gamma \to \Delta$ of profinite graphs is a continuous map such that $\varphi d_i = d_i \varphi$ for $i = 0,1$.
\end{df}

\begin{df}
A profinite graph $\Gamma$ is {\it connected} if every finite quotient graph of $\Gamma$ is  connected as an abstract graph.
\end{df}

\begin{df}[Simply connected profinite graphs]\label{simply connected}
Let $G$ be a pro-$\calC$ group acting freely on a profinite graph $\Gamma$. The natural epimorphism $\zeta: \Gamma \to G \bl \Gamma$ is called a {\it Galois $\calC$-covering} of $\Delta = G \bl \Gamma$ and the associated group is denoted by $G = G(\zeta)$. If $\Gamma$ does not have non-trivial Galois $\calC$-coverings, then $\zeta$ is called {\it universal $\calC$-covering} and, in this case, $\pi_1(\Gamma) = G(\Gamma | \Delta)$ is the {\it pro-$\calC$ fundamental group} of $\Gamma$. If $\pi_1(\Gamma) = 1$ then we say that $\Gamma$ is {\it $\calC$-simply connected}.
\end{df}

Let $\Gamma$ be a profinite graph. Define $E^{\ast}(\Gamma) = \Gamma \bl V(\Gamma)$ and $\widehat{\Z}_{\widehat{\calC}}$ as the pro-$\calC$ completion of $\Z$. 

\begin{df}[Profinite trees]
We say that $\Gamma$ is a {\it pro-$\calC$ tree} if the sequence
$$0 \buildrel{}\over\longrightarrow [[\widehat{\Z}_{\widehat{\calC}}(E^{\ast}(\Gamma),\ast)]] \buildrel{D}\over\longrightarrow [[\widehat{\Z}_{\widehat{\calC}}(V(\Gamma))]] \buildrel{\varepsilon}\over\longrightarrow \widehat{\Z}_{\widehat{\calC}} \buildrel{}\over\longrightarrow 0$$
is exact, where $d(\bar{e}) = d_1(e) - d_0(e)$ for the image $\bar{e}$ of $e \in E(\Gamma)$ in $E^{\ast}(\Gamma)$ with $d(\ast) = 0$ and $\varepsilon(v) = 1$ for $v \in V(\Gamma)$.
\end{df}

In \cite[Section 3.10]{Rib} we can see that $``$every simply connected profinite graph is a profinite tree$"$, $``$if $\calC$ is the variety of all finite soluble groups, $\calC$-simply connected and $\calC$-tree are equivalent$"$.

We say that  a profinite group $G$
acts on the profinite tree $T$ if it acts continuously on
$T$ and the action commutes with $d_0$ and $d_1$. We
 denote by $G_t$ the stabilizer of $t \in T$ in $G$.

We define
$$T^g = \{m \in T: gm = m\}$$
to be the set of $g$-fixed points for a pro-$\calC$ tree $T$, then $T^g$ is again a $\calC$-tree (see \cite[Theorem 4.1.5]{Rib}). The subgroup of $G$ generated by all vertex stabilizers usually will be denoted by $\widetilde G$.

\begin{df}
Let $\Gamma$ be a pro-$\calC$ tree. Given the vertices $v,w \in \Gamma$ we can consider $[v,w]$ as the intersection of all pro-$\calC$ subtrees of $\Gamma$ containing $v$ and $w$. We call $[v,w]$ a {\it geodesic}. The {\it length $\ell([v,w])$} of $[v,w]$ is the number of edges in $[v,w]$.
\end{df}

\begin{df}[Finite graphs of profinite groups]
A {\it finite graph of pro-$\calC$ groups} is a pair $(\calG,\Gamma)$ where $\Gamma$ is a connected finite graph, $\calG = \dot{\bigcup_{m \in \Gamma}}\calG(m)$ with $\calG(m)$ a pro-$\calC$ group for each $m$ and there are continuous monomorphisms $\partial_i: \calG(e) \to \calG(d_i(e))$ for each edge $e \in E(\Gamma)$.
\end{df}

\begin{df}
A {\it morphism}
$$\underline{\alpha} = (\alpha,\alpha'): (\calG,\Gamma) \to (\calG',\Gamma')$$ of
graphs of pro-$\calC$ groups consists of a pair of continuous maps $\alpha: \calG \to \calG'$ and $\alpha': \Gamma \to \Gamma'$ such that the diagram
$$\begin{tikzpicture}[node distance=3cm, auto]
\node(calG) at (0,0) {$\calG$};
\node(calG') at (3,0) {$\calG'$};
\node(T) at (0,-3) {$\Gamma$};
\node(T') at (3,-3) {$\Gamma'$};
\path[->] (calG) edge [] node[above] {$\alpha$} (calG');
\path[->] (T) edge [] node[below] {$\alpha'$} (T');
\path[->] (calG) edge [] node[left] {$\pi$} (T);
\path[->] (calG') edge [] node[right] {$\pi'$} (T');
\end{tikzpicture}$$
is commutative. We say that $\underline{\alpha}$ is a {\it monomorphism} if $\alpha$ and $\alpha'$ are injective.
\end{df}

\begin{df}
A finite graph of pro-$\calC$ groups $(\calG,\Gamma)$ is {\it reduced} if for every edge $e$ which is not a loop (that is, $d_0(e) \neq d_1(e)$), neither $\partial_0: \calG(e) \to \calG(d_0(e))$ nor $\partial_1: \calG(e) \to \calG(d_1(e))$ is an isomorphism. We say that an edge $e$ is fictitious if $e$ is not a loop and $\partial_0$ or $\partial_1$ is an isomorphism.
\end{df}

Given a finite graph of groups with fictitious edges $e_1,...,e_n$ we can collapse this edges transforming it in a reduced finite graph of groups with the same fundamental group (see \cite[Remark 2.8]{CZ}).

\subsection{The fundamental group of a finite graph of pro-$\calC$ groups}

For a totally new construction of the fundamental group of a profinite graph of pro-$\calC$ groups using an approach similar to the abstract case, the reader should check \cite{AZ}. Here we will use the standard construction done in \cite{Rib}.

\begin{df}[The fundamental group of a finite graph of profinite groups]
Let $(\calG,\Gamma)$ be a finite graph of pro-$\calC$ groups and $T$ be a maximal subtree of $\Gamma$. {\it The pro-$\calC$ fundamental group} of $(\calG,\Gamma)$ is a group $G$ with continuous homomorphisms
$$\nu_m: \calG(m) \to G$$
and continuous maps $E(\Gamma) \to G$ denoted by $e \mapsto t_e$ such that $t_e = 1$ if $e \in E(T)$ and
$$(\nu_{d_0(e)}\partial_0)(x) = t_e(\nu_{d_1(e)}\partial_1)(x)t_e^{-1}$$
for all $x \in \calG(e)$ and $e \in E(\Gamma)$ satisfying the following universal property: if $H$ is a pro-$\calC$ group, $\beta_m: \calG(m) \to H$ are continuous homomorphisms, $e \to s_e$ ($e \in E(\Gamma)$) is a map with $s_e = 1$ if $e \in E(T)$ and
$$(\beta_{d_0(e)}\partial_0)(x) = t_e(\beta_{d_1(e)}\partial_1)(x)t_e^{-1}$$
for all $x \in \calG(e)$ and $e \in E(\Gamma)$, then there is a unique continuous homomorphism $\delta: G \to H$ such that $\delta(t_e) = s_e$ and $\delta \nu_m = \beta_m$ for each $m \in \Gamma$.
\end{df}

We will denote the fundamental group of $(\calG,\Gamma)$ by
$$G = \Pi_1(\calG,\Gamma).$$

The reader can find proofs of existence and uniqueness of fundamental groups in \cite{Rib}.

The basic free constructions such as free profinite groups, free profinite products, free profinite products with amalgamation and $\HNN$-extensions are the most basic examples of fundamental groups of finite graphs of groups (see \cite[Section 6.2]{Rib}).

The following lemma will be useful:

\begin{lemma}\label{aux}
Let $(\calG,\Gamma)$ be an injective profinite graph of pro-$\calC$ groups and $\Delta$ a connected subgraph of $\Gamma$. Then $\Pi_1(\calG,\Delta)$ is embedded in $\Pi_1(\calG,\Gamma)$.
\end{lemma}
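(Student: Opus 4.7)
The plan is twofold: to first construct the natural continuous homomorphism $\delta:\Pi_1(\calG,\Delta)\to\Pi_1(\calG,\Gamma)$ via the universal property, and then to prove its injectivity by induction on $n:=|E(\Gamma)\setminus E(\Delta)|$, reducing each step to a one-edge extension and invoking embedding theorems for pro-$\calC$ amalgamated free products and HNN-extensions.

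First I would choose a maximal subtree $T_\Delta$ of $\Delta$ and extend it to a maximal subtree $T$ of $\Gamma$. The canonical homomorphisms $\nu_m:\calG(m)\to\Pi_1(\calG,\Gamma)$ for $m\in V(\Delta)$ and the canonical elements $t_e\in\Pi_1(\calG,\Gamma)$ for $e\in E(\Delta)$ (with $t_e=1$ when $e\in E(T_\Delta)$) satisfy the compatibility relation defining $\Pi_1(\calG,\Delta)$, so the universal property yields $\delta$.

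For injectivity, I would induct on $n$. The base case $n=0$ forces $\Delta=\Gamma$ and is trivial. For $n\geq 1$, I would choose an intermediate connected subgraph $\Delta\subsetneq\Delta'\subseteq\Gamma$ with $|E(\Delta')|=|E(\Delta)|+1$; such a $\Delta'$ exists by connectedness of $\Gamma$, obtained by adjoining an edge $e \in E(\Gamma)\setminus E(\Delta)$ incident to $V(\Delta)$ together with its other endpoint if necessary. The inductive hypothesis applied to $\Delta'\subseteq\Gamma$ yields $\Pi_1(\calG,\Delta')\hookrightarrow\Pi_1(\calG,\Gamma)$, and since $\delta$ factors through this embedding, it suffices to prove injectivity in the one-edge case $\Pi_1(\calG,\Delta)\hookrightarrow\Pi_1(\calG,\Delta')$. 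In this one-edge case, $\Pi_1(\calG,\Delta')$ is either a pro-$\calC$ amalgamated free product $\Pi_1(\calG,\Delta)\amalg_{\calG(e)}\calG(v)$ (if $e$ brings in a new vertex $v$) or a pro-$\calC$ HNN-extension $\HNN(\Pi_1(\calG,\Delta),\calG(e),t_e)$ (otherwise). The embedding $\Pi_1(\calG,\Delta)\hookrightarrow\Pi_1(\calG,\Delta')$ then follows from the standard properness results for these constructions (see \cite[Section 6.2]{Rib}) once one knows that $\calG(e)$ embeds in $\Pi_1(\calG,\Delta)$ via the composite $\calG(e)\xrightarrow{\partial_i}\calG(d_i(e))\to\Pi_1(\calG,\Delta)$; this last injectivity is guaranteed because further composition with $\delta$ recovers the canonical map $\calG(e)\to\Pi_1(\calG,\Gamma)$, which is injective by the assumed injectivity of $(\calG,\Gamma)$.

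The main obstacle I anticipate is this properness step in the pro-$\calC$ category: unlike in abstract Bass--Serre theory, in the pro-$\calC$ setting injectivity of the edge map into the base does not by itself formally guarantee injectivity of the base into the resulting pro-$\calC$ free construction, so one has to appeal to the specific embedding theorems of profinite Bass--Serre theory in \cite{Rib} rather than to a purely formal amalgamation argument.
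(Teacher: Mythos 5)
Your reduction to the one-edge case is where the argument breaks down, and it breaks down at exactly the point you yourself flag as ``the main obstacle'' without actually resolving it. For a pro-$\calC$ amalgamated free product $A\amalg_C B$ (or HNN-extension) over an \emph{arbitrary} closed edge group $C$, there are no ``standard properness results'' to appeal to: knowing that $C$ embeds in $A$ and in $B$ does \emph{not} imply that $A$ and $B$ embed in $A\amalg_C B$, and the literature (\cite[Section 6.2]{Rib}, \cite{RZ}) contains explicit counterexamples where properness fails. Properness is a theorem only under additional hypotheses, the relevant one here being that the edge group is finite. The usual device for forcing properness of $\Pi_1(\calG,\Delta)\amalg_{\calG(e)}\calG(v)$ would be to exhibit a pro-$\calC$ group containing compatible copies of both factors intersecting in $\calG(e)$; the only candidate is $\Pi_1(\calG,\Gamma)$ via $\delta$, and invoking it presupposes that $\delta$ is injective on $\Pi_1(\calG,\Delta)$ --- which is precisely the statement being proved. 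So the inductive step is circular as written.

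The paper avoids this by a different route: it first treats the case of a finite graph of \emph{finite} groups, where $\Pi_1(\calG,\Gamma)$ and $\Pi_1(\calG,\Delta)$ are pro-$\calC$ completions of the corresponding abstract fundamental groups; collapsing $\Delta$ to a single vertex produces a graph of groups whose edge groups are still finite, and for finite edge groups \cite[Propositions 6.5.3 and 6.5.5]{Rib} do guarantee injectivity. The general case is then obtained by writing $\Pi_1(\calG,\Gamma)=\varprojlim_U\Pi_1(\calG_U,\Gamma)$ and $\Pi_1(\calG,\Delta)=\varprojlim_U\Pi_1(\calG_U,\Delta)$ over the quotients $G_U=G/\til{U}$, which are fundamental groups of finite graphs of finite groups, and passing to the inverse limit. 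To salvage your induction you would have to perform this reduction to finite edge groups first; as it stands the properness step is a genuine gap, not a routine citation. A secondary point: the lemma is stated for a profinite, hence possibly infinite, graph $\Gamma$, in which case an induction on $|E(\Gamma)\setminus E(\Delta)|$ does not get started.
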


\begin{proof}
First assume that $(\calG,\Gamma)$ is a finite graph of finite groups. Note that $\Pi_1(\calG,\Gamma)$ and $\Pi_1(\calG,\Delta)$ are pro-$\calC$ completions of the abstract fundamental groups $\abs{\Pi_1}(\calG,\Gamma)$ and $\abs{\Pi_1}(\calG,\Delta)$, respectively. Consider the collapse $\Gamma \to \Gamma/\Delta$ and the graph of groups $(\calG_{\Delta}, \Gamma/\Delta)$ defining $\calG_{\Delta}(m) = \abs{\Pi_1}(\calG,\Delta)$ if $m$ is the image of $\Delta$ and $\calG_{\Delta}(m) = \calG(m)$ for the others $m$. Since the edge groups of $(\calG_{\Delta}, \Gamma/\Delta)$ are finite, the pro-$\calC$ topology of $\abs{\Pi_1}(\calG_{\Delta},\Gamma/\Delta) = \abs{\Pi_1}(\calG,\Gamma)$ induces the full pro-$\calC$ topology on $\abs{\Pi_1}(\calG,\Delta)$ (see \cite[Proposition 6.5.5]{Rib}). Then by \cite[Proposition 6.5.3]{Rib} the graph of groups $(\overline{\calG}_{\Delta},\Gamma/\Delta)$ of the pro-$\calC$ completions of vertex and edge groups is injective. It follows that the pro-$\calC$ completion of $\abs{\Pi_1}(\calG,\Delta)$ embeds in the pro-$\calC$ completion of $\abs{\Pi_1}(\calG,\Gamma)$, as desired.

For the general case, given any open normal subgroup $U$ of $G$, consider
$$\til{U} = \cic{U \cap \calG(v)^g : v \in V(\Gamma), g \in G}.$$
We have (see \cite[Proposition 2.15 and Theorem 3.9]{AZ})
$$G_U = G/\til{U} = \Pi_1(\calG_U,\Gamma)$$
such that
$$\Pi_1(\calG,\Gamma) = \varprojlim \Pi_1(\calG_U,\Gamma)$$
and
$$\Pi_1(\calG,\Delta) = \varprojlim \Pi_1(\calG_U,\Delta).$$
Since the diagram
$$\begin{tikzpicture}[auto]
\node(A) at (0,0) {$\Pi_1(\calG,\Gamma)$};
\node(B) at (3,0) {$\Pi_1(\calG,\Delta)$};
\node(C) at (0,-2) {$\Pi_1(\calG_U,\Gamma)$};
\node(D) at (3,-2) {$\Pi_1(\calG_U,\Delta)$};
\path[->] (A) edge[] node[above] {} (B);
\path[right hook->] (C) edge[] node[below] {} (D);
\path[->] (A) edge[] node[left] {} (C);
\path[->] (B) edge[] node[right] {} (D);
\end{tikzpicture}$$
commutes, we get $\Pi_1(\calG,\Delta) \leq \Pi_1(\calG,\Gamma)$.
\end{proof}

With standard arguments one can deduce the following:

\begin{prop}\label{new}
Let $G$ be the fundamental group of a finite reduced graph of profinite groups with at least one edge. If $G$ is not virtually cyclic, then $G$ contains a non-abelian free profinite group. In particular, if $\calC$ does not contains all finite groups, then $G$ is not pro-$\calC$.
\end{prop}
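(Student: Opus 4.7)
The plan is to reduce to the single-edge case via Lemma \ref{aux} and then apply a ping-pong argument on the corresponding Bass-Serre tree.

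Pick an edge $e$ of $\Gamma$ and let $\Delta$ be the one-edge subgraph consisting of $e$ together with its endpoints. By Lemma \ref{aux}, $\Pi_1(\calG,\Delta)$ embeds in $G$. If $e$ is not a loop, the reduced hypothesis forces $\Pi_1(\calG,\Delta) = \calG(d_0(e)) \amalg_{\calG(e)} \calG(d_1(e))$ to be a proper amalgamated free product; if $e$ is a loop, $\Pi_1(\calG,\Delta)$ is the corresponding HNN extension of $\calG(d_0(e))$ along $\calG(e)$. Provided this one-edge subgroup is not already virtually cyclic, I would run ping-pong on its Bass-Serre tree: choose $a \in \calG(d_0(e)) \setminus \partial_0(\calG(e))$ and $b \in \calG(d_1(e)) \setminus \partial_1(\calG(e))$ in the amalgam case (respectively, the stable letter in the HNN case), and pass to sufficiently high powers of suitable conjugates to obtain two hyperbolic isometries of the tree whose axes are disjoint or share only a bounded segment. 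These then generate a copy of $\widehat{F_2}$ inside $G$. If every one-edge sub-fundamental group happens to be virtually cyclic but $G$ itself is not, then $\Gamma$ must contain at least two edges, and I would instead build two independent hyperbolic elements coming from distinct edges, working with the full standard tree $S(\calG,\Gamma)$ of the global graph of groups.

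The \emph{in particular} clause follows at once: any closed subgroup of a pro-$\calC$ group is pro-$\calC$, so if $G$ were pro-$\calC$ then the copy of $\widehat{F_2}$ inside $G$ would also be pro-$\calC$; but every $2$-generated finite group is a continuous quotient of $\widehat{F_2}$, and every finite group embeds into a $2$-generated one (such as a symmetric group $S_n$), so by the assumed closure properties of $\calC$ this would force $\calC$ to contain all finite groups, contradicting the hypothesis. The main obstacle is the profinite ping-pong step: one must verify that the chosen pair of hyperbolic elements generates not merely an abstractly free subgroup but a closed subgroup topologically isomorphic to $\widehat{F_2}$, which requires the ping-pong separation to survive under passage to finite quotients. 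This is the standard technical difficulty in profinite Bass-Serre theory and is handled using the profinite tree machinery of \cite{Rib}.
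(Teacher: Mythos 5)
Your overall strategy---reduce to a one-edge subgraph and run ping-pong---does not survive the passage to the profinite category, and the step you yourself flag as ``the standard technical difficulty'' is in fact the whole content of the proposition. In the profinite world, two elements satisfying abstract ping-pong conditions generate an abstractly free \emph{discrete} subgroup, but the \emph{closed} subgroup they topologically generate need not be a free profinite group: for instance, the two canonical generators of the free prosoluble group of rank $2$ generate a dense abstract copy of $F_2$, yet the closure is prosoluble and contains no non-abelian free profinite subgroup at all. So exhibiting two ``independent hyperbolic elements'' proves nothing about $\widehat{F_2}$ sitting inside $G$ as a closed subgroup; moreover the elliptic/hyperbolic classification of isometries and the existence of axes do not carry over to profinite trees, so even the setup of your ping-pong is unavailable. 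Two further problems: Lemma \ref{aux} requires the graph of groups to be injective, which is not a hypothesis of Proposition \ref{new}, so your very first step (embedding the one-edge fundamental group) is not justified; and your fallback case, in which every one-edge subgroup is virtually cyclic while $G$ is not, is only gestured at.

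The paper's proof avoids all of this by going in the opposite direction. It passes to the quotient $G_U=G/\til{U}$, where $\til{U}$ is generated by the intersections $U\cap\calG(v)^g$ for an open normal subgroup $U$; this quotient is the fundamental group of a finite graph of \emph{finite} groups, hence the profinite completion of an abstract virtually free group. For a suitable $U$ this abstract group is not virtually cyclic and the graph of groups stays reduced, so by classical Bass--Serre theory it contains a non-abelian free subgroup of finite index, whose closure is an open non-abelian free profinite subgroup of $G_U$; this is then lifted back to $G$ along $G\to G_U$ using the projectivity of free profinite groups. If you wanted to salvage your route you would need a profinite ping-pong statement at least as strong as Corollary \ref{amalgam}, and even that would only yield a free profinite product of two procyclic groups, which need not be non-abelian free profinite (e.g.\ when the chosen elements have order $2$). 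The reduction to finite vertex groups is therefore doing essential work and cannot be bypassed.
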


\begin{proof}
Let $U$ be an open normal subgroup and $\til{U}$ as defined in the previous lemma. Then $G_U = G/\til{U}$ is the fundamental group $\Pi_1(\calG_U,\Gamma)$ of a finite graph $(\calG_U,\Gamma)$ of finite groups $\calG(m)\til{U}/\til{U}$ with $\partial_i$ induced by $\partial_i$ of $(\calG,\Gamma)$ (note that the order of $\calG(m)\til{U}/\til{U}$ is bounded by the order of $\calG(m)/(\calG(m) \cap U)$ which is finite since $U$ is open in $G$). As $G = \varprojlim_U G_U$, $G/\til{U}$ is not virtually cyclic for some $U$ and also we can choose such $U$ such that $(\calG_U,\Gamma)$ is reduced. But $G_U$ is the profinite completion of the abstract fundamental group $\abs{\Pi_1}(\calG_U,\Gamma)$ (cf. \cite[Proposition 6.5.1]{Rib}) and $\abs{\Pi_1}(\calG_U,\Gamma)$ is not virtually cyclic, so $\abs{\Pi_1}(\calG_U,\Gamma)$ contains a non-abelian free subgroup of finite index and therefore $G/\til{U}$ contains a non-abelian open free profinite group. Then $G$ contains a non-abelian free profinite group and so, cannot be pro-$\calC$.
\end{proof}

\begin{df}[Standard graph]
Given a finite graph of pro-$\calC$ groups $(\calG,\Gamma)$ with fundamental group $G$, the corresponding {\it standard graph} is
$$S = S(G) = \dot{\bigcup_{m \in \Gamma}} G/\calG(m).$$
The vertices of $S$ are the cosets $g\calG(v)$ with $v \in V(\Gamma)$ and $g \in G$; the edges of $S$ are the cosets $g\calG(e)$ with $e \in E(\Gamma)$ and $g \in G$; the incidence maps of $S$ are given by
$$d_0(g\calG(e)) = g\calG(d_0(e)),\quad d_1(g\calG(e)) = gt_e\calG(d_1(e))$$
with $e \in E(\Gamma)$ and $t_e = 1$ if $e \in E(T)$.
\end{df}

The fact that $S$ is indeed a profinite graph as well as the basic properties of $S$ are explained in \cite[Section 6.3]{Rib}. In particular, it is proved in \cite[Theorem 6.3.5]{Rib} that $S$ is $\calC$-simply connected and so is a pro-$\calC$ tree. Besides,
 $G$ acts continuously on $S$ with $G \bl S \simeq \Gamma$. 

\subsection{Auxiliary Results}\label{subs2.3}

In this subsection we will prove some facts relevant to the general theory of profinite groups acting on profinite graphs. The results here will be helpful to show that $k$-acylindrical actions are well-behaved under taking quotients in the next section.

\begin{lemma}\label{translated graph}
Let $G$ be a profinite group acting on a simply connected profinite graph $\Gamma$ and $\Sigma$ a finite connected subgraph of $\Gamma$. Then $H = \cic{G_v \mid v \in V(\Sigma)}$ is the fundamental group of a finite tree of pro-$\calC$ groups $\Pi_1(\calH, \Delta)$ whose edge and vertex groups are stabilizers of the corresponding edges and vertices of a connected transversal of $\Delta = H \bl H\Sigma$ in $T$.
\end{lemma}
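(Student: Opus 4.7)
The plan is to recognize $H\Sigma$ as a pro-$\calC$ subtree of $\Gamma$ on which $H$ acts with finite quotient $\Delta$, and then invoke the converse direction of profinite Bass-Serre theory to identify $H$ with $\Pi_1(\calH, \Delta)$. First I would verify that $H\Sigma$ is a closed, profinitely connected subgraph of $\Gamma$, and therefore a pro-$\calC$ tree as a connected subgraph of one. Closedness follows from compactness of $H \times \Sigma$ together with continuity of the action. For connectedness, since $H$ is the closure of the abstract subgroup $H_0$ generated (in the algebraic sense) by the $G_v$ with $v \in V(\Sigma)$, and each $G_v$ fixes $v$, every translate $g\Sigma$ with $g \in G_v$ meets $\Sigma$ at $v$; chaining along finite words in these generators shows $\bigcup_{h \in H_0} h\Sigma$ is abstractly connected, and a density/inverse-limit argument transfers this to the profinite topology, where connectedness is checked against finite continuous quotients.

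Next, I would observe that $\Delta := H\bl H\Sigma$ is a finite connected graph, since the finite graph $\Sigma$ surjects onto it. In fact $\Delta$ is a tree: because $H$ is topologically generated by elliptic elements (the vertex stabilizers $G_v$), no hyperbolic generators exist, and any cycle in $\Delta$ would force a non-elliptic generator of $H$. Contracting $H$-equivalent vertices and edges inside the finite connected graph $\Sigma$ yields a connected transversal $\til{\Delta} \subseteq \Sigma \subseteq H\Sigma$ of $\Delta$.

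Finally, applying the converse of profinite Bass-Serre theory (cf.\ \cite[Section 6.6]{Rib}), the $H$-action on the pro-$\calC$ tree $H\Sigma$ with finite quotient $\Delta$ and connected transversal $\til{\Delta}$ exhibits $H$ as $\Pi_1(\calH, \Delta)$, whose vertex and edge groups are the $H$-stabilizers of the corresponding lifts in $\til{\Delta}$. Since $G_v \subseteq H$ for every $v \in V(\Sigma)$, and edge stabilizers are contained in incident vertex stabilizers, these $H$-stabilizers coincide with the ambient $G$-stabilizers of the points of $\til{\Delta}$. The main technical obstacle is the passage from abstract to profinite connectedness of $H\Sigma$: the word-by-word argument produces connectedness of $\bigcup_{h \in H_0} h\Sigma$ only in the algebraic sense, and one must argue that this survives projection onto every finite continuous quotient graph, where the finite-dimensional version of the same chaining suffices.
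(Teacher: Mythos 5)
Your proposal is correct and follows essentially the same route as the paper: show $T=H\Sigma$ is a connected (hence simply connected) profinite subgraph, deduce that the finite quotient $\Delta$ is a tree because $H$ is generated by vertex stabilizers, and invoke the structure theorem \cite[Theorem 6.6.1]{Rib}. The only differences are presentational: your chaining argument for connectedness of $H\Sigma$ is compressed in the paper to the observation that its quotients modulo open subgroups of $H$ are connected, and your informal claim that a cycle in $\Delta$ would force a non-elliptic generator is exactly what \cite[Proposition 3.9.2]{Rib} (cited by the paper) makes precise.
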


\begin{proof}
Put $T = H\Sigma$. Since the quotient of $T$ modulo every open subgroup $U$ of $H$ is connected, $T$ is connected. By \cite[Proposition 3.7.3]{Rib}, $\Sigma$ is simply connected. By \cite[Proposition~3.9.2]{Rib}, $\Delta=H \bl T$ is simply connected and so is a finite tree. By \cite[Theorem 6.6.1]{Rib}, $H=\Pi_1(\calH, \Delta)$ whose edge and vertex groups are stabilizers of the corresponding edges and vertices of a connected transversal of $\Delta$ in $T$.
\end{proof}

\begin{prop}\label{prop1}
Let $G$ be a profinite group acting on a simply connected profinite graph $\Gamma$ and let $e$ be an edge of $S$. Suppose the stabilizers of the vertices $v$, $w$ of the edge $e$ satisfy the following condition: $|G_v:G_e| + |G_w:G_e| > 4$. Then the group $H = \cic{G_v, G_w}$ is a free profinite amalgamated product $G_v \amalg_{G_e} G_w$. In particular, if $\calC$ does not contains all finite groups, then $H$ is not pro-$\calC$.
\end{prop}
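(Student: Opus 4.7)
The plan is to apply Lemma \ref{translated graph} to the connected finite subgraph $\Sigma\subset\Gamma$ consisting of $e$ together with its two endpoints $v$ and $w$. This presents $H=\cic{G_v,G_w}$ as the fundamental group $\Pi_1(\calH,\Delta)$ of a finite tree of pro-$\calC$ groups, where $\Delta=H\bl H\Sigma$ and the vertex and edge groups of $(\calH,\Delta)$ are the stabilizers of the vertices and edges of a connected transversal of $\Delta$ inside $H\Sigma$.

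Next I would identify $\Delta$. Since $\Sigma$ has only the two vertices $v,w$ and the single edge $e$, the quotient $\Delta$ has at most two vertices (the orbits $Hv$ and $Hw$) and precisely one edge (the orbit $He$). If $Hv=Hw$ then $\Delta$ would be a single vertex carrying a loop, whose pro-$\calC$ fundamental group is $\widehat{\Z}_{\widehat{\calC}}\neq 1$; this contradicts the simple-connectedness of $\Delta$ asserted by Lemma \ref{translated graph}. Hence $Hv\neq Hw$ and $\Delta$ is a segment. Taking $\Sigma$ itself as a connected transversal, the vertex groups of $(\calH,\Delta)$ are $G_v$ and $G_w$ and the edge group is $G_e$, so reading off the fundamental group of a segment of pro-$\calC$ groups gives
\[H=G_v\amalg_{G_e}G_w.\]

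For the ``in particular'' clause, the index hypothesis $|G_v:G_e|+|G_w:G_e|>4$ forces both indices to be at least $2$ and at least one of them to be at least $3$; in particular $(\calH,\Delta)$ is reduced and $H$ is not virtually cyclic, since the only reduced amalgam that is virtually cyclic arises from indices $(2,2)$, which is explicitly excluded. Proposition \ref{new} then furnishes a non-abelian free profinite subgroup of $H$, and hence $H$ cannot be pro-$\calC$ for any variety $\calC$ that omits some finite group.

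The main obstacle I expect is ruling out the loop case $Hv=Hw$: if one could not exclude this, the conclusion would degenerate to an HNN-extension instead of an amalgam. This obstacle is overcome by invoking the simple-connectedness half of Lemma \ref{translated graph}, together with the elementary observation that a loop admits a non-trivial Galois $\calC$-covering and so is not simply connected in the pro-$\calC$ sense. The remainder of the argument is essentially bookkeeping: extracting the vertex and edge groups from the transversal and combining Proposition \ref{new} with the standard characterisation of virtually cyclic reduced amalgams.
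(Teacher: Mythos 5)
Your proof is correct and follows essentially the same route as the paper: invoke Lemma \ref{translated graph} to present $H$ as the fundamental group of the (necessarily segment-shaped) quotient tree, read off $H = G_v \amalg_{G_e} G_w$, and conclude via Proposition \ref{new}; the paper's two-line proof simply leaves implicit the identification of $\Delta$ as a segment that you spell out. The only quibble is your claim that $|G_v:G_e|+|G_w:G_e|>4$ forces both indices to be at least $2$ --- it does not (consider $(1,4)$) --- but this imprecision is already present in the paper's own statement, whose proof likewise needs $G_e$ to be proper in both $G_v$ and $G_w$ for $(\calH,\Delta)$ to be reduced before Proposition \ref{new} applies.
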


\begin{proof}
By Lemma \ref{translated graph}  
$$H=\Pi_1(\calH,H \bl D) \simeq G_v \amalg_{G_e} G_w.$$
Then, by Proposition \ref{new}, $H$ contains a non-abelian free profinite group and so cannot be pro-$\calC$.
\end{proof}

The previous result does not hold if there is some $e$ such that $|G_v:G_e| = |G_w:G_e| = 2$. When it occurs we will say that $\cic{G_v,G_e,G_w}$ is of {\it dihedral type} or that the edge $e$ {\it generates a dihedral type group}.

\begin{coro}\label{coro1}
Let $\calC$ be a variety  not containing all finite groups and $G$ a pro-$\calC$ group acting on a simply connected profinite graph $\Gamma$. Suppose that $G$ has no edges $e$ generating a dihedral type group. Then for every edge $e$ with vertices $v,w$ and non-trivial edge stabilizer, we have $G_v = G_e$ or $G_w = G_e$.
\end{coro}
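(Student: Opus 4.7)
The plan is to derive this as an immediate contradiction from Proposition~\ref{prop1}. Suppose, for contradiction, that there exists an edge $e$ of $\Gamma$ with vertices $v, w$, non-trivial stabilizer $G_e$, and such that both $G_e \subsetneq G_v$ and $G_e \subsetneq G_w$. Then each of the indices $|G_v : G_e|$ and $|G_w : G_e|$ is at least $2$.

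The only remaining way the index hypothesis $|G_v:G_e| + |G_w:G_e| > 4$ of Proposition~\ref{prop1} could fail is if both indices equal exactly $2$. But that configuration is precisely the definition of $\langle G_v, G_e, G_w \rangle$ being of dihedral type, which is excluded by the hypothesis of the corollary. Hence at least one of the two indices is $\geq 3$, giving
$$|G_v:G_e| + |G_w:G_e| \geq 5 > 4.$$
Applying Proposition~\ref{prop1} to the profinite group $G$ acting on the simply connected profinite graph $\Gamma$, we conclude that $H = \langle G_v, G_w \rangle$ is a free profinite amalgamated product $G_v \amalg_{G_e} G_w$, and since $\calC$ does not contain all finite groups, $H$ is not pro-$\calC$.

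On the other hand, $H$ is a closed subgroup of the pro-$\calC$ group $G$, so $H$ must itself be pro-$\calC$. This contradiction shows that one of the inclusions $G_e \subseteq G_v$, $G_e \subseteq G_w$ must be an equality. There is no substantive obstacle beyond the bookkeeping of ruling out the borderline index configuration $(2,2)$, which is exactly where the dihedral-type hypothesis is consumed; the corollary is truly a direct consequence of Proposition~\ref{prop1}.
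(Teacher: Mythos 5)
Your proof is correct and follows essentially the same route as the paper's: rule out the index configuration $(2,2)$ using the no-dihedral-type hypothesis, so that the index condition of Proposition~\ref{prop1} holds, and then conclude that $\langle G_v, G_w\rangle$ would fail to be pro-$\calC$ inside the pro-$\calC$ group $G$, a contradiction. The paper's version is just a terser statement of exactly this argument.
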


\begin{proof}
Suppose that, for a non-trivial edge stabilizer $e$ with vertices $v$ and $w$, we have $G_v$ and $G_w$ both different from $G_e$. By hypothesis, the index of $G_e$ in $G_v$ or in $G_w$ is greater than $2$. By Proposition \ref{prop1}, $H = \cic{G_v,G_w}$ is a not a pro-$\calC$ subgroup of $G$, a contradiction.
\end{proof}

\begin{lemma}\label{subgroup generation in amalgam}
Let $G=G_1\amalg_H G_2$ be a splitting of a profinite group $G$  as an amalgamated free profinite product of profinite groups $G_1,G_2$ and  $H_1\leq G_1, H_2\leq G_2$ be subgroups such that $H_1\cap H \leq U \geq H\cap H_2$ for some open normal subgroup $U$ of $G$. Then  $\langle H_1,H_2\rangle=L_1 \amalg_K L_2$ with $L_1\leq H_1U, L_2\leq H_2U, K\leq H\cap U$.
\end{lemma}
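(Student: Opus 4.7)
The plan is to apply Lemma \ref{translated graph} to the action of $L := \langle H_1, H_2\rangle$ on the standard pro-$\calC$ tree $T$ of the splitting $G = G_1 \amalg_H G_2$, which yields an amalgamated decomposition of $L$; then to use the hypothesis $H_i\cap H\leq U$ to bound the vertex and edge groups by $H_iU$ and $HU$.

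First I would let $v_i$ denote the vertex of $T$ stabilized by $G_i$ and $e$ the edge between them stabilized by $H$. Because $H_i\leq G_{v_i}$, the finite connected subgraph $\Sigma=\{v_1,e,v_2\}$ of $T$ satisfies $L=\langle L\cap G_{v_1},L\cap G_{v_2}\rangle$. Invoking Lemma \ref{translated graph}, $L$ is the fundamental group of a finite tree of pro-$\calC$ groups over $L\bl L\Sigma$ whose vertex and edge groups are the corresponding stabilizers in $L$. Since $v_1$ and $v_2$ lie in distinct $G$-orbits of $T$, hence in distinct $L$-orbits, $L\bl L\Sigma$ is the segment $\{[v_1],[e],[v_2]\}$, giving
\[
L=(L\cap G_1)\amalg_{L\cap H}(L\cap G_2).
\]
Set $L_i:=L\cap G_i$ and $K:=L\cap H$; the inclusion $K\leq H\leq HU$ is immediate.

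To prove $L_i\leq H_iU$, I would pass to the quotient $\bar G := G/\tilde U$, where $\tilde U$ is the normal closure in $G$ of $U\cap G_1$ and $U\cap G_2$. Since $\tilde U\leq U$, it suffices to prove $\pi(L_i)\leq \pi(H_iU)$ for $\pi\colon G\to\bar G$. By the argument used in the proof of Proposition \ref{new} together with \cite[Proposition 6.5.3]{Rib}, $\bar G$ splits as the profinite amalgamated product $\bar G_1\amalg_{\bar H}\bar G_2$ with $\bar G_i=G_i/(U\cap G_i)$ and $\bar H=H/(H\cap U)$. Setting $\bar U := U/\tilde U$, the hypothesis $H_i\cap H\leq U$ implies that the image of $H_i\cap H$ is contained in $\bar U$ inside $\bar G$. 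Applying Lemma \ref{translated graph} again to $\bar L=\pi(L)$ acting on the standard tree of $\bar G$ gives $\bar L=(\bar L\cap \bar G_1)\amalg_{\bar L\cap \bar H}(\bar L\cap \bar G_2)$. Iteratively reducing any alternating word in $\bar H_1\cup \bar H_2$ that represents an element of $\bar L\cap \bar G_i$, each admissible reduction absorbs a factor of $\bar H_j\cap \bar H$ into the normal subgroup $\bar U$, and the remaining unreducible syllable lies in $\bar H_i$. This shows $\bar L\cap \bar G_i\leq \bar H_i\bar U$, and pulling back to $G$ gives $L_i\leq H_iU$.

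The main obstacle is the reduction step in the second paragraph: carrying out, in the profinite setting, the amalgam normal-form reduction of alternating words and verifying carefully that each reducing letter really does lie in $\bar H_i\cap \bar H\subseteq \bar U$. This requires an inverse-limit argument through finite quotients of $\bar G$, where the classical normal-form theorem for amalgamated products of discrete groups applies, combined with the modular-law calculation inside each vertex group $\bar G_i$ using the hypothesis $H_i\cap H\leq U$.
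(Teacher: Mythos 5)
Your first step is fine and is essentially a repackaging of what the paper gets from \cite[Proposition 4.4]{ZM-89}: letting $L=\langle H_1,H_2\rangle$ act on the standard tree and applying Lemma \ref{translated graph} to the segment $\{v_1,e,v_2\}$ does give $L=(L\cap G_1)\amalg_{L\cap H}(L\cap G_2)$. The gap is in the second paragraph. Your strategy is to prove the stronger containment $\overline{L}\cap\overline{G}_i\leq \overline{H}_i\overline{U}$ in $\overline{G}=G/\tilde{U}$, and the engine of the proof is the assertion that every syllable lying in $\overline{H}_j\cap\overline{H}$ can be ``absorbed into $\overline{U}$''. But the hypothesis $H_j\cap H\leq U$ only controls the \emph{image} of $H_j\cap H$; the intersection of the images, $\overline{H}_j\cap\overline{H}$, can be strictly larger and need not lie in $\overline{U}$. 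Indeed, since $\overline{U}\cap\overline{G}_j=1$ (any $u\in U$ of the form $g_jt$ with $g_j\in G_j$, $t\in\tilde{U}$ forces $g_j\in U\cap G_j\leq\tilde{U}$), your assertion would force $\overline{H}_j\cap\overline{H}=1$, which fails as soon as some $h_j\in H_j\setminus U$ lies in $(U\cap G_j)H$. Concretely, take $G_1=S_3\times C_5$, $G_2=S_3$, $H=\langle(12)\rangle$ amalgamated, $H_1=1\times C_5$, $H_2=\langle(13)\rangle$, and $U$ the kernel of the sign map $G\to C_2$. Then $H_1\cap H=1=H_2\cap H\leq U$, $\tilde{U}=U$, $\overline{G}=C_2\amalg_{C_2}C_2=C_2$, $\overline{U}=1$, $\overline{H}_1=1$, $\overline{H}_2=C_2$; here $\overline{L}\cap\overline{G}_1=C_2\not\leq\overline{H}_1\overline{U}=1$, so the intermediate claim you are trying to prove is actually false (even though the lemma's conclusion holds, since $L=H_1\amalg H_2$ and $L\cap G_1=H_1\leq H_1U$). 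So the reduction-in-the-quotient argument cannot be repaired as stated: it proves too much.

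The paper avoids this by never passing to $G/\tilde{U}$. After obtaining the profinite splitting $L=L_1\amalg_K L_2$ with $L_1\leq G_1$, $L_2\leq G_2$, $K\leq H$ from \cite[Proposition 4.4]{ZM-89}, it works with the \emph{abstract} amalgam $G_1\ast_H G_2$ and invokes \cite[Lemma 3.1]{CZ} with the open subgroups $U\cap G_1$, $U\cap G_2$; that lemma keeps the bookkeeping inside the subgroups $H_i(U\cap G_i)$ of the vertex groups rather than in a quotient, which is exactly what sidesteps the failure of $\overline{H}_j\cap\overline{H}\leq\overline{U}$. The profinite statement then follows by taking closures, using that the edge groups control the induced topology. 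If you want a self-contained argument, you should imitate that: run the normal-form reduction in $G_1\ast_H G_2$ itself, tracking that each reducing letter lies in $H_j\cap H\leq U\cap G_j$ (true at the level of the actual subgroups, not their images), and only then pass to the inverse limit.
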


\begin{proof}
By \cite[Proposition 4.4]{ZM-89}, $\langle H_1,H_2\rangle=L_1 \amalg_K L_2$ with $L_1\leq G_1$, $L_2\leq G_2$, $K\leq H$. Recall that the fundamental group $\Pi_1(\calG, \Gamma)$ of a finite graph of profinite groups is a completion of the abstract fundamental group $\abs{\Pi_1}(\calG, \Gamma)$ of the same graph of groups with respect to to the topology generated by the finite index normal subgroups $N$ of $\abs{\Pi_1}(\calG,\Gamma)$ such that $N \cap \calG(v)$ is open in $\calG(v)$. Note that $G$ is the fundamental group of a finite graph of profinite groups. We can consider $U \cap G_1$ and $U \cap G_2$ to apply \cite[Lemma 3.1]{CZ} on $\abs{G} = G_1 \ast_H G_2$ to deduce that $\abs{\cic{H_1,H_2}} = L_1 \ast_K L_2$ where $L_1 \leq H_1(U \cap G_1)$, $L_2 \leq H_2(U \cap G_2)$ and $K \leq H \cap U$. Then   $\cic{H_1,H_2} = \overline L_1 \amalg_K \overline L_2$ with $\overline L_1 \leq H_1U$, $\overline L_2 \leq H_2U$ and $K \leq H \cap U$.
\end{proof}

\begin{coro}\label{amalgam}
Let $G=G_1\amalg_H G_2$ be a splitting of a  profinite group $G$  as an amalgamated free profinite product of profinite  groups $G_1,G_2$ and $H_1 \leq G_1, H_2 \leq G_2$ be subgroups such that $H_1\cap H = 1 = H_2\cap H$. Then $\langle H_1,H_2\rangle=H_1 \amalg H_2$.
\end{coro}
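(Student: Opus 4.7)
The plan is to show that the natural continuous epimorphism $\phi : H_1 \amalg H_2 \twoheadrightarrow L := \cic{H_1, H_2}$ induced by the inclusions $H_i \hookrightarrow L$ is injective. I would reduce this to the following extension criterion: every pair of continuous homomorphisms $f_i : H_i \to M$ into a finite group $M$ extends to some continuous $\tilde f : L \to M$. This suffices because the universal property of $H_1 \amalg H_2$ gives a unique continuous $\psi : H_1 \amalg H_2 \to M$ with $\psi|_{H_i} = f_i$; since $\tilde f \circ \phi$ also extends $(f_1, f_2)$, uniqueness forces $\psi = \tilde f \circ \phi$, so $\ker \phi \subseteq \ker \psi$, and as $(M, f_1, f_2)$ varies the intersection of these kernels in $H_1 \amalg H_2$ is trivial.

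To build the extension I would fix $(f_1, f_2)$ and choose an open normal subgroup $U$ of $G$ with $H_i \cap U \leq \ker f_i$ for $i = 1,2$; this is possible because each $\ker f_i$ is open in $H_i$ and the $H_i \cap U$ form a neighborhood basis of $1$ as $U$ shrinks. Because $H_1 \cap H = H_2 \cap H = 1 \leq U$, the preceding lemma applies and furnishes a decomposition $L = L_1 \amalg_K L_2$ with $H_i \leq L_i \leq H_i U$. Moreover, inspecting the proof of the lemma---which invokes its abstract counterpart \cite[Lemma 3.1]{CZ}, where the amalgamated subgroup satisfies $K \leq H \cap U$---one in fact obtains the strict bound $K \leq U$, a refinement that will be crucial in what follows.

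From $L_i \leq H_i U$ together with $H_i \leq L_i$, a one-line check shows $L_i = H_i \cdot (L_i \cap U)$ and consequently $L_i/(L_i \cap U) \cong H_i/(H_i \cap U)$. Since $\ker f_i \supseteq H_i \cap U$, the map $f_i$ factors through this quotient to give a continuous extension $g_i : L_i \to M$; the containment $K \leq U \cap L_i$ then makes $g_i|_K$ trivial for both $i$, so the two restrictions agree (trivially), and the universal property of the profinite amalgamated free product assembles $g_1$ and $g_2$ into the required $\tilde f : L_1 \amalg_K L_2 \to M$. The only delicate step is the sharper bound $K \leq U$: the bound $K \leq HU$ as stated in the lemma would not force $g_i|_K = 1$, so one must extract the stronger inclusion from the proof of the previous lemma; once that is done, the rest is a routine extension argument.
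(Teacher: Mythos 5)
Your proof is correct, and it rests on the same engine as the paper's --- the preceding lemma applied with $U$ small --- but the mechanism is genuinely different. The paper's entire proof is the one line ``Since $U$ in the preceding lemma is arbitrary, the result follows,'' i.e.\ an implicit limit argument in which $L_i \leq H_iU$ and $K \leq HU$ are squeezed down to $H_i$ and $1$ as $U$ shrinks; this glosses over the fact that the decompositions $L_1 \amalg_K L_2$ obtained for different $U$ are not obviously compatible with one another. You sidestep that issue entirely by reformulating the goal as injectivity of the canonical map $H_1 \amalg H_2 \to \cic{H_1,H_2}$ and testing it against one finite quotient at a time, invoking the lemma only once per target $(M,f_1,f_2)$ with a $U$ adapted to $\ker f_1,\ker f_2$. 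The price you pay is twofold, and you correctly flag the main cost: the stated bound $K \leq HU$ is useless for killing $g_i|_K$, so you must extract the sharper $K \leq H\cap U$ from inside the lemma's proof (where it is indeed available, since the abstract amalgamated subgroup lands in $H\cap U$ and $H\cap U$ is closed). You also silently use $H_i \leq L_i$, which is not in the lemma's statement but is implicit in its construction ($L_i$ is the vertex group $\cic{H_1,H_2}\cap G_i$) and is equally needed for the paper's own argument. Net effect: your version is longer but more self-contained in its logic, whereas the paper's is shorter but leaves the convergence of the decompositions to the reader.
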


\begin{proof}
Since we have $H_1 \cap H = 1 = H_2 \cap H$ we deduce from Lemma \ref{subgroup generation in amalgam} that $K\leq U$, $L_1 \leq H_1U$, $L_2 \leq H_2U$ for an arbitrary open normal subgroup $U$. Since the intersection of all such $U$ is trivial,  $L_1 = H_1$, $L_2 = H_2$, $K = 1$ and the result follows.
\end{proof}

Let $G$ be a pro-$\calC$ group acting on a pro-$\calC$ tree $T$. Recall that, for any subgroup $U$ of $G$, we set
$$\widetilde{U} = \cic{U \cap G_v : v \in V(T)}.$$

\begin{prop}\label{D compact}
Let $G$ be a pro-$\calC$ group acting on a pro-$\calC$ tree  $T$ and $U$ an open normal subgroup of $G$. Then $D_U=\{m\in \widetilde{U} \bl T : (G/\widetilde U)_m\neq 1\}$ is a profinite subgraph of $\widetilde{U} \bl T$.
\end{prop}

\begin{proof}
Put $G_U=G/\widetilde U$, $T_U=\widetilde{U} \bl T$. It suffices to prove that $D_U$ is closed in $T_U$. As $\{(G_U)_m : m\in T\}$ is a continuous family (see \cite[Lemma 5.2.2]{Rib}), the set $\{m\in \til{U} \bl T : (G/\widetilde U)_m= 1\}=\{m\in \til{U} \bl T : (G/\widetilde U)_m\leq U/\widetilde U\}$ is open. Hence $D_U=\{m\in \widetilde{U} \bl T : (G/\widetilde U)_m\neq 1\}$ is closed.
\end{proof}

\subsection{Relatively projective groups}

As was already mentioned in the introduction, the profinite analogue of the Kurosh Subgroup Theorem does not hold for free profinite products of profinite groups and so the algebraic structure of closed subgroups of free profinite products is not clear. To compensate this, Dan Haran (see \cite{Har87}) introduced the notion of relatively projective  profinite groups and proved that a subgroup $H$ of a free profinite product $\coprod_{x \in X} G_x$ is projective relatively to the family $\{H\cap G_x^g\mid x\in X, g\in G\}$. We shall use a slightly more restricted notion here from \cite{HZ} as it better fits into the profinite version of Bass-Serre theory and suffices for our purpose. 

\begin{df}
A {\it group-pile} is a pair $\bfG = (G,T)$ consisting of a profinite group $G$, a profinite space $T$ and a continuous action of $G$ on $T$. We say that $\bfG$ is {\it finite} if both $G$ and $T$ are finite.
\end{df}

\begin{df}
A {\it morphism} of group-piles $\alpha: (G,T) \to (G',T')$ consists of a group homomorphism $\alpha: G \to G'$ and a continuous map $\alpha: T \to T'$ such that $\alpha(g t) = \alpha(g)\alpha(t)$ for all $t \in T$ and $g \in G$. The above morphism is an {\it epimorphism} if $\alpha(G) = G'$, $\alpha(T) = T'$ and for every $t' \in T'$ there is $t \in T$ such that $\alpha(t) = t'$ and $\alpha(G_t) = {G'}_{t'}$. It is {\it rigid} if $\alpha$ maps $G_t$ isomorphically onto ${G'}_{\alpha(t)}$, for every $t \in T$, and the induced map of the orbit spaces $G \bl T \to G' \bl T'$ is a homeomorphism.
\end{df}

\begin{df}
An {\it embedding problem} for a group-pile $\bfG$ is a pair $(\varphi,\alpha)$ where $\varphi: \bfG \to \bfH$ and $\alpha: \bfG' \to \bfH$ are morphisms of group-piles such that $\alpha$ is an epimorphism. It is {\it finite} if $\bfG'$ is finite. It is {\it rigid} if $\alpha$ is rigid. A {\it solution} to the embedding problem is a morphism $\gamma: \bfG \to \bfG'$ such that $\alpha \circ \gamma = \varphi$.
\end{df}

It means that the diagram
$$\begin{tikzpicture}[auto]
\node(X) at (0,0) {$(G',T')$};
\node(G) at (3,0) {$(H,X)$};
\node(F) at (3,2) {$(G,T)$};
\path[dashed,<-] (X) edge [] node[above left] {$\gamma$} (F);
\path[->] (X) edge [] node[below] {$\alpha$} (G);
\path[<-] (G) edge [] node[right] {$\varphi$} (F);
\end{tikzpicture}$$
is commutative.

For the next definition, let $\calG$ be a continuous family of subgroups of $G$ and $\calC$ a class of finite groups closed for subgroups, quotients and extensions.

\begin{df}
A pile $\bfG$ is {\it $\calC$-projective} if every finite rigid $\calC$-embedding problem for $\bfG$ has a solution. A group $G$ is {\it $\calG$-projective} if every finite $\calC$-embedding problem has a weak solution, that is, there exists a group homomorphism $\gamma: G \to G'$ such that $\alpha \circ \gamma = \varphi$.
\end{df}

We will make use of the following result:

\begin{theo}{\cite[Theorem 4.1]{Zal22}}\label{piletheo}
\begin{enumerate}[(i)]
\item Let $G$ be a prosoluble group acting on a profinite tree $D$ with trivial edge stabilizers. Then $(G,V(D))$ is $\calS$-projective, where $\calS$ is the class of all finite soluble groups.

\item Let $\calC$ be a class of finite groups closed for subgroups, quotients and extensions. Let $(G,T)$ be a projective pro-$\calC$ pile. Suppose that exists a continuous section $\sigma: G \bl T \to T$. Then $G$ acts on a pro-$\calC$ tree with trivial edge stabilizers and non-trivial vertex stabilizers being stabilizers of points in $T$.
\end{enumerate}
\end{theo}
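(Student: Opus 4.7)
Part (i) asks for $\calS$-projectivity of the pile $(G, V(D))$ when $G$ is prosoluble and acts on $D$ with trivial edge stabilizers. The plan is to combine the pro-$\calS$ Bass-Serre decomposition with the universal property of free prosoluble products. First I would choose a continuous transversal of $G\bl D$ in $D$; the pro-$\calS$ structure theory from \cite{Rib} then presents $G$ as a free prosoluble product of the vertex stabilizers along that transversal. Given a finite rigid $\calS$-embedding problem $\varphi : (G, V(D)) \to (H, X)$ and $\alpha : (G', T') \to (H, X)$ with $G'$ finite soluble, rigidity of $\alpha$ supplies, for each $t' \in T'$, an isomorphism $\alpha|_{G'_{t'}} : G'_{t'} \to H_{\alpha(t')}$. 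For each transversal vertex $v$, pick a lift $t'_v \in T'$ of $\varphi(v)$ and set $\gamma|_{G_v} := (\alpha|_{G'_{t'_v}})^{-1} \circ \varphi|_{G_v}$. These assemble via the universal property of the prosoluble free product into a continuous $\gamma : G \to G'$, and the map $V(D) \to T'$ is then forced by equivariance. Checking $\alpha \circ \gamma = \varphi$ at the level of groups and of orbit data is routine.

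For part (ii), the strategy is to realize the desired tree $D$ as an inverse limit of finite trees built from rigid finite $\calC$-quotients of $(G, T)$, with the section $\sigma$ keeping the construction coherent. I would write $(G, T) = \varprojlim_i (G_i, T_i)$ as an inverse limit of finite $\calC$-piles compatible with $\sigma$. For each finite $(G_i, T_i)$, form a finite graph $D_i$ with vertex set $T_i \sqcup (G_i \bl T_i)$ and, for every $t \in T_i$, one edge joining $t$ to its orbit vertex; the edge stabilizers are then trivial and the $T_i$-vertex stabilizers recover the prescribed point stabilizers. The profinite graph $D := \varprojlim D_i$ carries the sought $G$-action with trivial edge stabilizers and with point stabilizers of $T$ as the nontrivial vertex stabilizers.

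The main obstacle, and the only place where projectivity enters in a non-trivial way, is showing that $D$ is $\calC$-simply connected. Given any Galois $\calC$-covering $\zeta : \widetilde D \to D$, I would package the deck group and the lifts of $\sigma$ into a finite rigid $\calC$-embedding problem for the pile $(G, T)$: the deck group supplies the finite quotient, and the lifted orbit datum gives rigidity. Projectivity of $(G, T)$ then produces a solution, which unpacks to a $G$-equivariant section of $\zeta$ and forces $\zeta$ to be trivial. Hence $\pi_1^{\calC}(D) = 1$, so $D$ is a pro-$\calC$ tree. The delicate technical point throughout is maintaining inverse-limit compatibility of the graphs $D_i$ while preserving rigidity of the associated finite embedding problems; the continuous section $\sigma$ is exactly what makes both possible.
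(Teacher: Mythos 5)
The paper does not prove this statement: it is imported verbatim from \cite[Theorem 4.1]{Zal22}, so there is no internal argument to compare against. Judged on its own merits, your attempt has genuine gaps in both parts.

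For (i), your opening move --- presenting $G$ as a free prosoluble product of the vertex stabilizers along a continuous transversal --- is precisely what fails in the profinite setting. A prosoluble group acting on a profinite tree with trivial edge stabilizers need \emph{not} decompose as a free prosoluble product of its vertex stabilizers (plus a free factor); the failure of this Kurosh-type decomposition is the entire reason Haran's relative projectivity and the notion of a projective pile were introduced, as the paper emphasizes in the introduction and in Section 2.4. If the decomposition you invoke were available, Theorem \ref{piletheo}(i) would be a triviality and the classification in \cite{Zal23} would be unnecessary: prosoluble subgroups of free prosoluble products act on the standard tree with trivial edge stabilizers, yet they include Frobenius groups $\widehat{\Z}_{\pi} \rtimes C$ that are not free products of point stabilizers. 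Note also that part (i) carries no hypothesis guaranteeing a continuous transversal $G \bl D \to D$ (such sections exist for second countable spaces, not in general), so even the first step of your construction is unavailable. The actual content of the theorem is solving the finite rigid embedding problems \emph{without} a free product structure.

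For (ii), the graph you build is not the right one. With vertex set $T_i \sqcup (G_i \bl T_i)$ and one edge joining each $t$ to its orbit vertex $[t]$, the stabilizer of that edge is $G_t$ itself: any $g \in G_t$ fixes $t$ and automatically fixes $[t]$, since orbits are $G_i$-invariant. So the edge stabilizers are exactly the point stabilizers, not trivial; moreover every orbit vertex is fixed by all of $G_i$, and the graph is a disjoint union of stars, one per orbit, hence not connected and so not a pro-$\calC$ tree. The construction that works is different: one first embeds $G$ into a free pro-$\calC$ product $\coprod_{s \in Im(\sigma)} G_s \amalg F$ using the section $\sigma$ (this is \cite[Proposition 2.5]{Zal22}, quoted in the paper as Proposition \ref{alternative}) and restricts the action on the standard tree of that product; equivalently, one takes vertices $G \sqcup \bigsqcup_{x} G/G_{\sigma(x)}$ and edges $G \times (G \bl T)$ with $d_0(g,x) = g$ and $d_1(g,x) = gG_{\sigma(x)}$, where the edge stabilizers genuinely are trivial. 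Your closing idea of converting a Galois $\calC$-covering into a rigid embedding problem is in the right spirit for proving simple connectivity, but it cannot rescue a graph that is disconnected and has the wrong stabilizers.
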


We also need the following proposition:

\begin{prop}{\cite[Proposition 2.5]{Zal22}}\label{alternative}
Let $\calC$ be a class of finite groups closed for subgroups, quotients and extensions. Let $(G,T)$ be a $\calC$-projective group-pile such that there exists a continuous section $\sigma: G \bl T \to T$. Then $G$ embeds into a free pro-$\calC$ product
$$\coprod_{s \in Im(\sigma)} G_s \amalg F$$
where $F$ is a free pro-$\calC$ group of rank $d(G)$, the minimal number of generators of $G$. Moreover, every $G_t$ is of the form $G_s^g \cap G$ for some $g \in \coprod_{s \in Im(\sigma)} G_s \amalg F$, $s \in Im(\sigma).$
\end{prop}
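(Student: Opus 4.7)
The plan is to invoke Theorem \ref{piletheo}(ii) to convert the projectivity hypothesis on the pile into an explicit tree action, and then translate that tree action into the desired free-product embedding via profinite Bass--Serre theory.

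First, I would apply Theorem \ref{piletheo}(ii) to the $\calC$-projective pile $(G,T)$ with its continuous section $\sigma$. This produces a pro-$\calC$ tree $D$ on which $G$ acts with trivial edge stabilizers, and whose nontrivial vertex stabilizers are precisely the subgroups $\{G_t : t\in T\}$. The vertices of $D$ with nontrivial stabilizers are in $G$-equivariant bijection with $T$, so the given continuous section $\sigma : G\bl T \to T$ lifts to a continuous transversal of these orbits in $D$; on the remaining orbits the $G$-action is free, and any free continuous action of a profinite group on a profinite space admits a continuous section, so $\sigma$ extends to a continuous section on all of the vertex-orbit space of $D$.

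Next, I would apply profinite Bass--Serre theory to this action. A pro-$\calC$ group acting on a pro-$\calC$ tree with trivial edge stabilizers and admitting a continuous section of the vertex-orbit space embeds into the fundamental group of the associated graph of pro-$\calC$ groups whose edge groups are trivial; with trivial edge groups this fundamental group is, by its universal property, exactly the free pro-$\calC$ product of the nontrivial vertex stabilizers along the section, amalgamated with a free pro-$\calC$ group $F$ accounting for the edges outside a maximal subtree of $G\bl D$. This yields an embedding of $G$ into $\coprod_{s\in Im(\sigma)} G_s \amalg F$. The rank of $F$ can be taken to be $d(G)$: choose a topological generating set of $G$ of size $d(G)$, map the generators of a free pro-$\calC$ group of rank $d(G)$ onto them, combine with the inclusions of the $G_s$, and observe that the resulting surjection onto $G$ controls the rank of the free part needed to realise the embedding.

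For the moreover statement, each $t\in T$ corresponds to a unique vertex $v_t\in D$ of nontrivial stabilizer; its $G$-orbit meets $Im(\sigma)$ in a unique $s$, so $v_t = s\cdot g$ for some $g\in G$ and $G_t = G_s^g$ inside $G$. Inside the free pro-$\calC$ product $P = \coprod_s G_s \amalg F$, the stabilizer of the vertex $s$ of the standard pro-$\calC$ tree of $P$ is precisely the factor $G_s$; conjugating by the image of $g$ in $P$, one obtains $G_t = G_s^g \cap G$ with $g\in P$.

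The main obstacle I expect is the profinite Bass--Serre step: while the classical abstract version (trivial edge stabilizers yield a free product of vertex groups with a free group) is textbook, the profinite analogue requires the continuous section on $G\bl D$ and a careful inverse-limit argument when this quotient graph is genuinely profinite rather than finite. The most delicate point is controlling the rank of $F$ by $d(G)$ rather than by the potentially much larger rank of the first Betti number of $G\bl D$; this requires exploiting that $G$ is topologically generated by $d(G)$ elements to produce a generating set of the graph of groups compatible with a spanning subtree whose complement has the correct size.
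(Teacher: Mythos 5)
The paper gives no proof of this statement; it is quoted verbatim from \cite[Proposition 2.5]{Zal22}, so the comparison must be with the proof in that source. Your argument has a genuine gap, and in fact runs backwards through the logic of \cite{Zal22}. You begin by invoking Theorem \ref{piletheo}(ii) to produce a tree action with trivial edge stabilizers; but in \cite{Zal22} that statement is Theorem 4.1, which is obtained \emph{from} Proposition 2.5 (the projective pile is first embedded into the free product $\coprod_s G_s \amalg F$, which then acts on its standard tree with trivial edge stabilizers, and the action is restricted to $G$). So your first step is circular. More seriously, your second step --- ``a pro-$\calC$ group acting on a pro-$\calC$ tree with trivial edge stabilizers embeds into the fundamental group of the associated graph of groups, which is the free product of the vertex stabilizers with a free group on the edges outside a maximal subtree'' --- is not an available piece of profinite Bass--Serre theory. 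The quotient graph $G\bl D$ is in general an infinite profinite graph; such graphs need not contain maximal subtrees, the structure theorem $G=\Pi_1(\calG,G\bl D)$ fails outside the finite-quotient case (this failure is exactly what motivates the notion of relatively projective piles, as the introduction of the present paper emphasizes), and even formally the free part produced this way would be indexed by a profinite set of edges, not a set of size $d(G)$. Your proposed fix for the rank --- exhibiting a surjection $\coprod_s G_s\amalg F\to G$ with $F$ of rank $d(G)$ --- does not by itself yield an embedding of $G$ into the source.

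The surjection you construct at the end is, however, the right starting point: the actual proof makes it into a \emph{rigid epimorphism of piles} $\alpha:(\Gamma,T')\to(G,T)$ with $\Gamma=\coprod_{s\in Im(\sigma)}G_s\amalg F$ and $F$ free pro-$\calC$ of rank $d(G)$ (the continuous section $\sigma$ is what allows one to build $T'$ and make $\alpha$ rigid on point stabilizers), and then uses the $\calC$-projectivity of $(G,T)$ directly: one solves the induced finite rigid embedding problems over the finite quotients of $\Gamma$ and passes to the inverse limit by the usual compactness argument to obtain $\gamma:(G,T)\to(\Gamma,T')$ with $\alpha\circ\gamma=\mathrm{id}$. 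Injectivity of $\gamma$ and the identification $G_t=G_s^{\,g}\cap G$ then fall out of the rigidity of $\alpha$, which is essentially the computation in your final paragraph. In short, the projectivity hypothesis must be used as an embedding-problem lifting property against the target free product; it cannot be routed through a tree action and a structure theorem that does not hold in the profinite setting.
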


\section{Acylindrical actions and prosoluble groups}\label{sec3}

In this section we prove a generalization of \cite[Lemma 11.2]{WZ17}. This result allows us to deduce that if $\calC$ does not contain all finite groups and if  $G$ is a pro-$\calC$ group acting $k$-acylindrically on a simply connected profinite graph $T$, then for any closed normal subgroup $N$ generated by vertex stabilizers the quotient group $G/N$ acts $k$-acylindrically on $N\backslash T$.

\begin{df}\label{def1}
An action of a profinite group $G$ on a profinite graph $T$ is called {\it $k$-acylindrical} if the stabilizer of any geodesic in $T$ of length greater than $k$ is trivial.
\end{df}

This definition is sometimes stated in the following way:

\begin{df}\label{def2}
An action of a profinite group $G$ on a profinite tree $T$ is called {\it $k$-acylindrical} if, for every nontrivial $g \in G$, the subtree of fixed points
$$T^g = \{m \in T : gm = m\}.$$
has diameter at most $k$.
\end{df}

It is easy to see that both definitions are equivalent: suppose that Definition \ref{def1} holds and there is a path $[v,w]$ with length greater than $k$ in $T^g$ for some nontrivial $g \in G$. Then $g \in G_{[v,w]}$, a contradiction. Conversely, if $T^g$ has diameter at most $k$ for every nontrivial $g \in G$ and $[v,w]$ is a geodesic of length greater than $k$. If $g \in G_{[v,w]}$, then $gv = v$ and $gw = w$ so that $[v,w] \in T^g$, a contradiction.

\begin{df}
We say that a finite graph of profinite groups $(\calG,\Gamma)$ is {\it $k$-acylindrical} if its  profinite fundamental group acts $k$-acylindrically on its standard profinite tree.
\end{df}

\begin{prop}\label{dtprop}
Let $(\calG,\Gamma)$ be a $k$-acylindrical finite graph of profinite groups and $G$ its fundamental group. Then there is no edge $e$ such that
$$|G_{d_0(e)}: G_e| = 2 = |G_{d_1(e)}: G_e|$$
with $G_e \neq 1$.
\end{prop}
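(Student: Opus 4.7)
My plan is to argue by contradiction: assuming an edge $e\in E(\Gamma)$ satisfies $G_e\neq 1$ and $[G_{d_0(e)}:G_e]=[G_{d_1(e)}:G_e]=2$, I will exhibit an arbitrarily long geodesic in the standard tree $S$ pointwise fixed by the nontrivial group $G_e$, contradicting Definition \ref{def1}.

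Lift $e$ to an edge $\til e$ of $S$ with endpoints $v_0,v_1$, so that $G_{\til e}$ has index $2$ in each of the vertex stabilizers $G_{v_0}$ and $G_{v_1}$. The key observation is that an index-$2$ subgroup is normal: any $a\in G_{v_0}\setminus G_{\til e}$ and $b\in G_{v_1}\setminus G_{\til e}$ normalize $G_{\til e}$, and hence so does every word in the abstract subgroup $\langle a,b\rangle$.

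Next I build inductively a non-backtracking path in $S$ by alternately applying $b$ and $a$ to $\til e$:
\[
\til e_0=\til e,\qquad \til e_1=b\til e,\qquad \til e_2=ba\til e,\qquad \til e_3=bab\til e,\qquad\ldots,
\]
so that $\til e_i=w_i\til e$ with $w_i\in\langle a,b\rangle$ and consecutive edges sharing a vertex by construction (either $(ba)^j v_0$ or $(ba)^j v_1$). The condition $\til e_i\neq \til e_{i+1}$ reduces to $a\notin G_{\til e}$ or $b\notin G_{\til e}$, so the path is reduced; because $S$ is a tree, a finite reduced path is automatically a geodesic with pairwise distinct vertices, so a geodesic $\gamma_n$ of any prescribed length $n$ exists.

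Finally, since $w_i$ normalizes $G_{\til e}$, the stabilizer of $\til e_i=w_i\til e$ is $w_i G_{\til e}w_i^{-1}=G_{\til e}$; thus $G_{\til e}$ fixes every edge of $\gamma_n$ and, by equivariance of $d_0,d_1$, every vertex of $\gamma_n$ as well. Choosing $n>k$ produces a geodesic of length greater than $k$ fixed pointwise by the nontrivial group $G_{\til e}\cong G_e$, contradicting $k$-acylindricity. The only bookkeeping subtlety is to verify rigorously that the alternating path is reduced, but this follows immediately from $a,b\notin G_{\til e}$; conceptually the proof exhibits the ``infinite dihedral axis'' through $\til e$ on which $\langle a,b\rangle$ acts with $G_{\til e}$ in the kernel, ruling out any acylindricity of the action.
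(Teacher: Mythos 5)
Your proof is correct and rests on the same key observation as the paper's: the index-$2$ hypothesis forces $G_e$ to be normalized by both vertex stabilizers, so it fixes an arbitrarily long geodesic through the lifted edge, contradicting $k$-acylindricity. The paper packages this by noting that $G_e$ is normal in $H=\cic{G_v,G_w}=G_v\amalg_{G_e}G_w$ and citing \cite[Proposition 4.2.2(b)]{Rib} to conclude that $G_e$ acts trivially on the unbounded subtree $H(v\cup e\cup w)$, whereas you construct the fixed geodesic explicitly via the alternating words in $a$ and $b$ --- the same argument, carried out by hand.
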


\begin{proof}
Assume that there is at least one edge $e$ with
$$|G_{d_0(e)}:G_e| = 2 = |G_{d_1(e)}:G_e|$$
and $G_e \neq 1$. Put $v = d_0(e)$, $w = d_1(e)$ and set $H = \cic{G_v,G_w}$. Let $D = H(v \cup e \cup w)$. Since for every open subgroup $U$ of $H$  the  quotient of $U\backslash D$ is connected, so is $D$. By Lemma \ref{translated graph}  $H = G_v \amalg_{G_e} G_w$. Since $G_e$ is normal in $H$, by \cite[Proposition 4.2.2, (b)]{Rib}, $G_e$ acts trivially on $D$, contradicting the $k$-acylindricity of the action. This finishes the proof.
\end{proof}

This proposition shows that if the action is $k$-acylindrical, then we can apply the results in Subsection \ref{subs2.3} without any restrictions.

\medskip
Recall that a profinite graph $\Gamma$ is also an abstract graph. Let $G$ be a profinite group acting on it.  Put $D=\{e\cup d_0(e)\cup d_1(e)\mid G_e\neq 1\}$. Then $D$ is an abstract  subgraph of $\Gamma$ and is maximal among abstract subgraphs of $\Gamma$ having non-trivial edge stabilizers for all its edges. 

\begin{theo}\label{new3}
Suppose $\calC$ does not contain all finite groups and let $G$ be a pro-$\calC$ group acting $k$-acylindrically on a simply connected profinite graph $\Gamma$. Then the maximal abstract subgraph $D$ of $\Gamma$ such that each $e \in D$ satisfies $G_e \neq 1$ has finite diameter $\leq 2k$. Moreover, for every connected component $D^*$ of $D$ there at most one other connected component at a finite distance to $D^*$ in which case stabilizers of edges and vertices of $D^*$ are of order $2$. 
\end{theo}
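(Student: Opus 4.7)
I would begin by applying Proposition~\ref{dtprop}: $k$-acylindricity excludes dihedral-type edges with non-trivial stabilizer, so Corollary~\ref{coro1} applies, giving for every $e\in D$ that $G_e=G_{d_0(e)}$ or $G_e=G_{d_1(e)}$. Consequently the stabilizers of two adjacent vertices of $D$ are comparable under inclusion. Along any geodesic $v_0,\ldots,v_n$ inside a connected component $D^*$, the sequence of vertex stabilizers is then a zigzag chain; labelling each edge $e_i$ with $\epsilon_i=0$ when $G_{e_i}=G_{v_{i-1}}$ and $\epsilon_i=1$ otherwise partitions the geodesic into maximal ascending and descending runs meeting at local peaks and valleys.

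To bound the length I would exploit the convexity of fixed-point subtrees: for every non-trivial $g\in G$ the subtree $T^g$ has diameter $\leq k$, so the set of vertices of the geodesic fixed by $g$ is a contiguous sub-path of length at most $k$. Inside an ascending run the chain structure forces every non-trivial element of the stabilizer at the lowest vertex of the run to fix the entire run, bounding that run by $k$; the analogous statement holds for descending runs read from their highest vertex. At a valley $v_m$, the chain structure in both directions implies that $G_{v_m}$ is contained in every stabilizer of the adjacent descending run ending at $v_m$ and of the adjacent ascending run starting at $v_m$, so non-trivial elements of $G_{v_m}$ fix the concatenated descent-then-ascent spanning both adjacent runs, which is therefore bounded by $k$.

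Combining these local bounds, I would argue that each geodesic decomposes into an initial extremal monotone run of length $\leq k$ and a single valley-centered sub-path of length $\leq k$, giving the total length bound $n\leq 2k$. The hard part will be eliminating geodesics with multiple internal peaks: a second internal peak would, by Corollary~\ref{coro1} combined with convexity of fixed-point sets, permit a single non-trivial stabilizer to cover a sub-path of length $>k$, contradicting acylindricity; this is where the pro-$\calC$ hypothesis plays its decisive role.

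For the moreover statement, suppose $D^{**}$ is another component of $D$ at finite distance from $D^*$, joined to $D^*$ by the geodesic $\eta$ in $\Gamma$. Every edge of $\eta$ has trivial stabilizer, since otherwise it would lie in $D$. At the first edge of $\eta$ leaving a vertex $v\in D^*$ Proposition~\ref{prop1} applied with trivial edge stabilizer forces $|G_v|+|G_w|\leq 4$ for the endpoints; combined with the non-triviality of $G_v$ and the exclusion of the dihedral $(2,2)$-with-$G_e\neq 1$ configuration from Proposition~\ref{dtprop}, this propagates through $D^*$ via Corollary~\ref{coro1} to force every stabilizer of $D^*$ to have order $2$. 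A third component $D^{***}$ at finite distance would produce a second outgoing geodesic from $D^*$, and the resulting amalgamated subgroup of $G$ would violate Proposition~\ref{prop1}; hence at most one such partner component exists.
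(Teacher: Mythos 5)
Your opening reductions are fine and broadly parallel to the paper's setup: Proposition \ref{dtprop} rules out dihedral edges, Corollary \ref{coro1} then makes adjacent vertex stabilizers along $D$ comparable, and monotone runs and valley-centred subpaths of a geodesic are each of length at most $k$ by acylindricity. The gap is precisely in the step you yourself flag as ``the hard part''. Your claim that a second internal peak would let a single non-trivial element fix a subpath of length $>k$ is not true: a non-trivial element of a valley stabilizer $G_{v_b}$ fixes only the portion of the geodesic between the two peaks adjacent to $v_b$, since beyond a peak the stabilizers descend again and $G_{v_b}$ need not be contained in them. A long zigzag $G_{v_0}\geq G_{v_1}\leq G_{v_2}\geq G_{v_3}\leq\cdots$ in which every monotone run has length $1$ satisfies all of your local constraints, so convexity of fixed-point sets gives nothing further and no purely local argument can close the bound; the pro-$\calC$ hypothesis must enter globally. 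The paper's proof does exactly this: for a finite connected $\Sigma\subseteq D$ of diameter $>2k$ it forms $H=\langle G_v\mid v\in V(\Sigma)\rangle$, which by Lemma \ref{translated graph} is the fundamental group of a finite tree of profinite groups; after reducing, Proposition \ref{new} (with Proposition \ref{dtprop} excluding the virtually cyclic dihedral case) forces the reduced tree to have a single vertex, so $H$ fixes a vertex $w\in\Sigma$. Any $u\in\Sigma$ at distance $>k$ from $w$ then has $G_u\neq 1$ fixing the geodesic $[u,w]$, contradicting acylindricity. This collapse of the whole tree of groups to a single fixed vertex is the idea missing from your argument.

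The ``moreover'' part is also off. Applying Proposition \ref{prop1} to the first edge of $\eta$ is vacuous when the far endpoint $w$ of that edge has trivial stabilizer: then $\langle G_v,G_w\rangle=G_v$ is pro-$\calC$ and no bound on $|G_v|$ follows. Moreover, if your reading did give a constraint, it would apply to every vertex of $D$ incident to an edge outside $D$, which is more than the theorem asserts. The correct pairing is $v\in D^*$ with a vertex $w$ of the \emph{other} component, where $G_w\neq 1$: then $\langle G_v,G_w\rangle=G_v\amalg G_w$ (the connecting geodesic contains an edge with trivial stabilizer) can be pro-$\calC$ only if it is virtually cyclic, i.e.\ only if $G_v\cong G_w\cong C_2$; and a third component at finite distance gives $\langle G_v,G_u,G_w\rangle$, which is never virtually cyclic, contradicting Proposition \ref{new} rather than Proposition \ref{prop1}.
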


\begin{proof}
Suppose on the contrary there exists a finite connected subgraph $\Sigma$ of $D$ whose diameter is $> 2k$. Let $H = \cic{G_v \mid v \in V(\Sigma)}$ and $T = H\Sigma$.  By Lemma \ref{translated graph} and \cite[Proposition 4.1.1]{Rib}, $H$ is the fundamental group of a tree of pro-$\calC$ groups $\Pi_1(\calH, H \bl T)$ whose edge and vertex groups are stabilizers of the corresponding edges and vertices of a connected transversal of $H \bl T$ in $T$. Hence, by Proposition \ref{dtprop} there is no edge $e$ such that
$$|\calH(d_0(e)):\calH(e)| = 2 = |\calH(d_1(e)):\calH(e)|$$
with $G_e \neq 1$. Let $(\calH_{red},H \bl T_{red})$ be a reduced tree of groups obtained from $(\calH, H \bl T)$. If it has more than one vertex, then $H$ cannot be pro-$\calC$ by Proposition \ref{new}. Hence, $(\calH_{red},H \bl T_{red})$ has one vertex $v$ only which implies that $H$ fixes a vertex $w$ in $T$ and, in fact, $w \in \Sigma$. As the diameter of $\Sigma$ is greater than $2k$, it follows that there exists a geodesic $[u,w]$ in $\Sigma$ of length greater than $k$ which is fixed by $H_u \neq 1$ (see \cite[Corollary 4.1.6]{Rib}) contradicting the $k$-acylindricity of the action. This proves the first part of the statement.

Now if there exist a connected component $D_0$ at finite distance from $D^*$ and $v\in D^*,w\in D_0$ are vertices with  $G_v\not\cong C_2$, then $\langle G_v, G_w\rangle$ is not virtually cyclic and so by Proposition \ref{new} can not be pro-$\cal C$, a contradiction. Similarly, if there exists still another connected component $D_1$ at finite distance  from $D^*$ and $u\in V(D_1)$, then $\langle G_v, G_u, G_w \rangle$ is not virtually cyclic again, contradicting Proposition \ref{new} again. This finishes the proof.
\end{proof}

\begin{coro}\label{trivial edge}
Suppose we are in the situation of Theorem \ref{new3} and $\Gamma$ has infinite diameter. Then $\Gamma$ possesses an edge $e$ with $G_e = 1$, namely $G_e=1$ for any $e\not\in D$.
\end{coro}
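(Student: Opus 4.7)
The plan is to derive the corollary directly from Theorem \ref{new3} together with the defining maximality property of $D$; no additional profinite Bass--Serre machinery is required.

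I would first establish the ``namely'' assertion that $G_e = 1$ for every $e \notin D$ by invoking the maximality of $D$. Fix an edge $e \in E(\Gamma) \setminus D$ and suppose for contradiction that $G_e \neq 1$. Then $D \cup \{e, d_0(e), d_1(e)\}$ is still an abstract subgraph of $\Gamma$ every edge of which has nontrivial $G$-stabilizer, but it strictly contains $D$, contradicting the maximality clause in the definition of $D$. Hence $G_e = 1$.

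For the existence of such an edge, I would exploit the infinite-diameter hypothesis on $\Gamma$. Since $\Gamma$ is a simply connected profinite graph, hence a profinite tree, and has infinite diameter, I can choose a reduced path $[u,v]$ in $\Gamma$ of length strictly greater than $2k$. If every edge of $[u,v]$ happened to lie in $D$, then $[u,v]$ would be a connected subgraph of $D$ and therefore contained in a single connected component $D^{\ast}$, forcing $\diam(D^{\ast}) > 2k$ and violating Theorem \ref{new3}. Consequently some edge of $[u,v]$ lies outside $D$, and by the preceding paragraph its stabilizer is trivial. I anticipate no substantial obstacle in carrying this out, as each step is a direct unpacking of the diameter bound and of the maximality of $D$.
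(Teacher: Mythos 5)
Your proposal is correct and follows exactly the route the paper intends: the corollary is stated without proof as an immediate consequence of Theorem \ref{new3}, with the ``namely'' clause holding by the maximality defining $D$ and the existence of an edge outside $D$ forced by comparing the infinite diameter of $\Gamma$ with the bound $\leq 2k$ on (connected subgraphs of) $D$. Your write-up simply makes these two implicit steps explicit, so there is nothing to add.
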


\begin{coro}\label{G-collapse}
Suppose that $G\backslash D$ has finitely many connected components. Then $G$ acts on a simply connected profinite $G$-quotient graph $T$ of $\Gamma$ with trivial edge stabilizers and vertex stabilizers equal to the stabilizers of some vertices $\Gamma$. Moreover, the number of maximal (by inclusion) $G$-stabilizers of vertices in $T$, up to conjugation, equals to the number of maximal $G$-stabilizers of vertices in $\Gamma$ and this number is bounded by the number of connected components of $G \bl D$.
\end{coro}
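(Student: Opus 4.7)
The plan is to construct $T$ as the quotient graph of $\Gamma$ obtained by identifying each connected component of $D$ to a single vertex, and then verify the required properties one by one.

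First, I would record the structural facts about $D$. By Theorem \ref{new3}, each connected component of $D$ has diameter at most $2k$, and by hypothesis $D$ decomposes into finitely many $G$-orbits of connected components. The subgraph $D$ is $G$-invariant since stabilizers transform $G$-equivariantly. Let $\pi \colon \Gamma \to T$ denote the map collapsing each connected component of $D$ to a single point. The combination of bounded diameter and finitely many orbits of components allows one to show that the corresponding equivalence relation is closed, so $T$ inherits the structure of a profinite graph on which $G$ acts continuously, with $\pi$ an equivariant morphism of profinite graphs that is the identity outside of $D$.

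Second, I would compute the stabilizer of each collapsed vertex. For a connected component $C$ of $D$ with setwise stabilizer $H = \Stab_G(C)$, the group $H$ acts on the pro-$\calC$ subtree $C$, which has bounded diameter. By standard fixed-point results for profinite group actions on profinite trees with bounded orbits (cf.\ \cite[Corollary 4.1.6]{Rib} applied inside the bounded subtree $C$), there is a vertex $v_C \in C$ fixed by $H$, giving $H \leq G_{v_C}$. Conversely, any $g \in G_{v_C}$ preserves $D$ and sends the component of $D$ through $v_C$ (namely $C$) to the component of $D$ through $g \cdot v_C = v_C$ (again $C$), so $G_{v_C} \leq H$. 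Therefore $\Stab_G(C) = G_{v_C}$.

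Third, I would verify the conclusion. Every edge of $T$ is the $\pi$-image of an edge in $E(\Gamma) \setminus E(D)$, and such an edge has trivial $G$-stabilizer by Corollary \ref{trivial edge}. Every vertex of $T$ is either the image of a vertex $v \in V(\Gamma) \setminus V(D)$ (with stabilizer $G_v$) or the image of a collapsed component $C$ (with stabilizer $G_{v_C}$ by the previous step); in either case the vertex stabilizer equals $G_v$ for some vertex $v$ of $\Gamma$. Simple connectedness of $T$ follows from that of $\Gamma$ since collapsing each connected pro-$\calC$ subtree $C$ preserves the exact sequence defining a pro-$\calC$ tree. Finally, the $G$-conjugacy classes of maximal vertex stabilizers of $T$ are in bijection with the $G$-orbits of connected components of $D$, i.e.\ with the connected components of $G \backslash D$, which yields the stated bound and the equality with the number of maximal vertex stabilizers in $\Gamma$.

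The main obstacle is the topological subtlety of performing the collapse within the profinite category: one must verify that the relation of ``lying in the same connected component of $D$'' is a closed equivalence relation, so that $T$ is a Hausdorff profinite graph, and that the collapse preserves the homological characterisation of a pro-$\calC$ tree. Both assertions rely critically on Theorem \ref{new3} (bounded diameter of components) together with the hypothesis of finitely many $G$-orbits of connected components of $D$, without which the quotient might fail to be either profinite or simply connected.
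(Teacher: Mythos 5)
Your overall strategy is the same as the paper's: collapse the connected components of $D$ (each of diameter at most $2k$ by Theorem \ref{new3}, finitely many up to translation by hypothesis), observe that the setwise stabilizer of a component fixes a vertex because the component has bounded diameter, and note that all surviving edges lie outside $D$ and hence have trivial stabilizers. The vertex-fixing argument, the identification $\Stab_G(C)=G_{v_C}$, and the counting at the end all match the paper.

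The genuine issue is the step you yourself flag as the main obstacle, and your proposed resolution of it does not work as stated. You collapse the abstract connected components of $D$ and assert that ``lying in the same connected component of $D$'' is a closed equivalence relation because of bounded diameter and finitely many orbits. But $D=\{m\in\Gamma : G_m\neq 1\}=\bigcup_{1\neq g}\Gamma^g$ is only an \emph{abstract} subgraph --- a union of closed sets that need not be closed --- and even when it is closed, the abstract (path) components of a profinite graph are in general dense, non-closed subsets; bounded diameter does not prevent a component from being a proper dense subgraph of its closure (a profinite star has diameter $2$). So the quotient by your relation need not be Hausdorff, i.e.\ need not be a profinite graph at all. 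The paper avoids this by first passing to the closures $\cl{D^*}$ of the components, invoking \cite[Lemma 3.2]{ChZ22} to guarantee that each closure is a connected \emph{profinite} subgraph of diameter $\leq 2k$, and then collapsing these closed subgraphs via \cite[Proposition 3.9.1]{Rib}, which also delivers simple connectedness of the quotient. Once you replace ``components of $D$'' by ``closures of components of $D$'' throughout (noting that the fixed vertex $v_C$ is then taken in $V(\cl{C})$ and that edges outside $\cl{D}$ are in particular outside $D$, hence have trivial stabilizer), the rest of your argument goes through unchanged.
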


\begin{proof}
By \cite[Lemma 3.2]{ChZ22} the closure of any connected component $\cl{D^*}$ is a profinite subgraph of $\Gamma$ of diameter $\leq 2k$, hence  $\cl{D}=\bigcup_i D_i^*$ has finite diameter, where $D_i^*$ runs via orbit representatives of the connected components of $D$. Moreover, as every connected component $\cl{D}_{\ast}$ of $\cl{D}$ has finite diameter, its stabilizer $\Stab(\cl{D}_{\ast})$ fixes some vertex $v_{\ast} \in V(\cl{D}_{\ast})$ (cf. \cite[Corollary, page 20]{Ser}). Now we can  collapse all connected components of the profinite subgraph $\cl{D}$ of $\Gamma$ (see \cite[Exercise 2.1.11]{Rib};  the resulting profinite quotient  graph $T$ of $\Gamma$ is   $G$-invariant and by \cite[Proposition 3.9.1]{Rib} is simply connected. The edge stabilizers of $T$ are  trivial;  the vertices of $T$ whose stabilizers are non-trivial are exactly the vertices to which the connected components of $D$ were collapsed; thus the  only non-trivial vertex stabilizers of $T$ are stabilizers of the connected component $\Stab(\cl{D}_{\ast}) = G_{v_{\ast}}$ of $\cl{D}$. This concludes the proof.
\end{proof}

\begin{theo}\label{acyact}
Suppose $\calC$ does not contain all finite groups and let $G$ be a pro-$\calC$ group acting $k$-acylindrically on a simply connected profinite graph $\Gamma$ and $U$ a closed normal subgroup of $G$. Then the action of $G_U = G/\til{U}$ on $\til{U} \bl \Gamma$ is $k$-acylindrical.
\end{theo}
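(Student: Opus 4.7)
The plan is to assume, for contradiction, that some nontrivial $\bar g \in G_U$ fixes pointwise a geodesic $\bar\tau = [\bar v_0, \bar e_1, \bar v_1, \ldots, \bar e_n, \bar v_n]$ of length $n > k$ in $\bar\Gamma := \til U \bl \Gamma$, and from this to produce an element of $G$ fixing a geodesic of the same length in $\Gamma$, contradicting the $k$-acylindricity hypothesis on the action of $G$ on $\Gamma$.

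The first step is to lift $\bar\tau$ to $\Gamma$. Let $\pi\colon\Gamma\to\bar\Gamma$ denote the quotient map. Since $\til U$ acts transitively on every fiber $\pi^{-1}(\bar v)$, I would choose $v_0\in\pi^{-1}(\bar v_0)$ arbitrarily and then inductively lift each $\bar e_i$ to an edge $e_i\in\pi^{-1}(\bar e_i)$ with $d_0(e_i)=v_{i-1}$, setting $v_i:=d_1(e_i)$. The resulting reduced path $\tau=[v_0,e_1,\ldots,e_n,v_n]$ is, by the simple-connectedness of $\Gamma$, a geodesic of length $n$.

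Now pick any lift $g\in G$ of $\bar g$; the heart of the argument is to modify $g$ within the coset $g\til U$ to an element that fixes $\tau$ pointwise. I would construct such a modification by induction along $\tau$: for the base case, $\bar g\bar v_0=\bar v_0$ gives $gv_0=sv_0$ for some $s\in\til U$, so $s^{-1}g$ fixes $v_0$. Inductively, having $g_{i-1}\in g\til U$ fixing the subpath $\tau_{i-1}:=[v_0,e_1,\ldots,v_{i-1}]$, the edge $g_{i-1}e_i$ starts at $v_{i-1}$ and projects to $\bar e_i$, so $g_{i-1}e_i=re_i$ for some $r\in U\cap G_{v_{i-1}}\subseteq\til U$; left-multiplying by $r^{-1}$ then fixes $e_i$ and hence $v_i$. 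The main obstacle is that this naive modifier $r^{-1}$ may fail to stabilize the already-fixed $\tau_{i-1}$. To overcome this, one must choose the modifier within its coset $r(U\cap G_{e_i})$ so as to lie simultaneously in the pointwise stabilizer of $\tau_{i-1}$; the existence of such a choice, which will be the technical core of the proof, rests on the normal generation of $\til U$ by the vertex-restricted subgroups $U\cap G_v$ together with the nested containment $G_{e_i}\subseteq G_{v_{i-1}}\cap G_{v_i}$ along a reduced path.

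Once an element $g'\in g\til U$ is produced fixing a subpath of $\tau$ of length $k+1$, the $k$-acylindricity of $G$ on $\Gamma$ forces $g'=1$. Hence $g\in\til U$ and $\bar g=1$ in $G_U$, contradicting the choice of $\bar g$ and proving that $G_U = G/\til U$ acts $k$-acylindrically on $\til U \bl \Gamma$.
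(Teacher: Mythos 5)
Your strategy is genuinely different from the paper's, and the step you yourself flag as ``the technical core'' is a real gap rather than a routine verification. In the inductive modification you need an element of the coset $r(\til{U} \cap G_{e_i})$ that simultaneously lies in the pointwise stabilizer of $\tau_{i-1}$. What you actually know is only that $g_{i-1}$ itself lies in $rG_{e_i} \cap G_{\tau_{i-1}}$; passing from the large coset $rG_{e_i}$ to the small coset $r(\til{U} \cap G_{e_i})$ is a coset-intersection condition that does not follow from the normality of $\til{U}$ or from $G_{e_i} \leq G_{v_{i-1}} \cap G_{v_i}$. Already inside a single finite vertex group one can have subgroups $A,B$ and a normal subgroup $N$ with $rB \cap A \neq \emptyset$ but $r(N\cap B)\cap A=\emptyset$ (e.g.\ $S_3$ with $A=\langle(12)\rangle$, $B=\langle(13)\rangle$, $N=A_3$, $r=(132)$), which is exactly the configuration your induction must rule out and cannot. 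There is also a smaller slip: from $d_0(g_{i-1}e_i)=v_{i-1}$ you only get $r\in \til{U}\cap G_{v_{i-1}}$, not $r\in U\cap G_{v_{i-1}}$, and $\til{U}\cap G_{v_{i-1}}$ is in general strictly larger than $U\cap G_{v_{i-1}}$.

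The paper avoids all of this: it never produces a single element fixing the whole lifted path. The only lifting fact it uses is the one-edge version, which is immediate: if $\bar g\neq 1$ stabilizes $\bar e_i$, then $ge_i=ue_i$ for some $u\in\til{U}$, and $u^{-1}g\in G_{e_i}$ is nontrivial since $g\notin\til{U}$; equivalently, edges of $\Gamma$ with trivial stabilizer have images with trivial stabilizer. Consequently every edge of your lifted geodesic $\tau$ has nontrivial $G$-stabilizer, so $\tau$ lies in the subgraph $D$ of nontrivially stabilized edges, and Theorem \ref{new3} bounds the diameter of the connected components of $D$, hence the length of $\tau$ and of $\bar\tau$. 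If you want to keep your set-up, replace the modification step by this observation and conclude via Theorem \ref{new3} rather than by exhibiting a nontrivial element of $G_\tau$ (note that, as stated, Theorem \ref{new3} gives the bound $2k$ rather than $k$, which is also all that the paper's own argument yields).
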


\begin{proof}
By Theorem \ref{new3} an abstract subgraph $D$ whose edge stabilizers are non-trivial has diameter at most $k$. Consider the natural action of $G_U$ on $\Gamma_U = \til{U} \bl \Gamma$. We shall prove that the same is true in $\Gamma_U = \til{U} \bl \Gamma$; we use the subscript $U$ for images in $\til{U} \bl \Gamma$.

Let $D$ be a maximal connected abstract subgraph of $\Gamma$ where each edge $e \in D$ satisfies $G_e \neq 1$. Note that any edge $e_U$ not in $D_U$ is an image of an edge $e \not\in D$ and since $G_e$ is trivial, $(G_U)_{e_U}$ is trivial. Therefore, a maximal connected (abstract) subgraph of $\Gamma_U$  whose all edge stabilizers are non-trivial is contained in $D_U$ and so has diameter at most $k$, which implies that the action of $G_U$ on $\Gamma_U$ is $k$-acylindrical.
\end{proof}

Recall that for any open normal subgroup $U$ of a pro-$\calC$ group $G$ acting on a pro-$\calC$ tree $T$ we  set
$$\widetilde{U} = \cic{U \cap G_v : v \in V(T)},$$
and $G_U=G/\widetilde U$, $T_U =\widetilde U\backslash T$.

\begin{prop}\label{compact non-trivial stabilizers} 
Suppose $\calC$ does not contain all finite groups and let $G$ be a pro-$\calC$ group acting $k$-acylindrically on a simply connected profinite tree $T$. The group $G_U$ acts on a profinite simply connected $G_U$-quotient tree $T^{\ast}_U$ of $T_U$ with trivial edge stabilizers and vertex stabilizers equal to the  maximal (by inclusion) stabilizers of  vertices of $T_U$. Moreover, if $G$ is prosoluble then $(G_U, V(T^{\ast}_U))$ is $\mathcal S$-projective, where $\mathcal S$ is the class of all finite soluble groups.
\end{prop}

\begin{proof}
By Proposition \ref{D compact} $D_U=\{m\in \widetilde U \bl T\mid (G/\widetilde U)_m\neq 1\}$ is a profinite subgraph of $T_U$. We can  collapse all connected components of the profinite subgraph $D_U$ of $T_U$ (see \cite[Exercise 2.1.11]{Rib}); the resulting profinite quotient graph $T^{\ast}_U$ of $T_U$ is $G$-invariant and by \cite[Proposition 3.9.1]{Rib} is  simply connected. The edge stabilizers of $T^{\ast}_U$ are trivial;  the vertices of $T^{\ast}_U$ whose stabilizers are non-trivial are exactly the vertices to which the connected components of $D_U$ were collapsed; thus the only non-trivial vertex stabilizers of $T^{\ast}_U$ are stabilizers of the connected component $\Stab(D_U)_{\ast}$ of $D_U$. But as was explained in the proof of Theorem \ref{acyact} any connected component of $D_U$ has diameter $\leq 2k$. Hence $\Stab(D_U)_{\ast}$ stabilizers a vertex in $(D_U)_\ast$ (cf. \cite[Corollary, Page 20]{Ser}) and clearly the stabilizer of this vertex is maximal by inclusion. This concludes the first part of the proof.

Suppose now $G$ is prosoluble. By item (i) of  Theorem \ref{piletheo} $(H_U,V(T^*_U))$ is $\calS$-projective that finishes the proof of second statement. 
\end{proof}

\begin{lemma}\label{kernel not generated by torsion}
Let $G$ be a prosoluble group acting $k$-acylindrically on a simply connected profinite tree $T$. Suppose for some primes $p\neq q$ and  some $v,w\in V(T)$ from different $G$-orbits, the stabilizer $G_v$ has a subgroup of order $p$ and the stabilizer $G_w$ has a subgroup of order $q$ as subquotients. Then there exists a subgroup $H$ of $G$ that admits an epimorphism $f:H \to C_{pq}$ to a cyclic group of order $pq$ such that $|f(H_v)|=p, |f(H_w)|=q$ and $K=\ker(f)$ is not generated by its vertex stabilizers. Moreover, non-trivial subgroups of the images of $H_v$ and $H_w$ in $H/\widetilde K$ do not commute up to conjugation.
\end{lemma} 

\begin{proof} 
W.l.o.g we may assume that $G_v$ and $G_w$ are maximal (by inclusion) vertex stabilizers. Choose an open normal subgroup $U$ of $G$ such that  $G_vU/U$ and $G_wU/U$  have subgroups $G_p$ of order $p$ and $G_q$ of order $q$, respectively. Consider the action of $G_U=G/\widetilde U$ on $T_U=\widetilde U\backslash S$ and note that in $G_U$ the vertex stabilizers of the images $v_u, w_u$ of $v$ and $w$ in $T_U$ are  $G_vU/U$ and $G_wU/U$ (still in different orbits) and refining $U$ we  may assume that $G_vU/U$ and $G_wU/U$ are maximal vertex stabilizers.   By Proposition \ref{compact non-trivial stabilizers} $G_U$ acts on a simply connected (infinite) profinite tree $T^*_U$ with trivial edge stabilizers, the vertex stabilizers of $T^*_U$ are maximal vertex stabilizers of $T_U$  and  $(G_U, V(T^*_U))$ is $\mathcal S$-projective, where $\calS$ is the class of all finite soluble groups. Hence $G_p$ and $G_q$ are subgroups of some  stabilizers of vertices of $T^*_U$ from distinct $G_U$-orbits.  Note that if a subgroup of $G$ generated by its vertex stabilizers then its image in $G_U$ generated by its vertex stabilizers as well, and vertex stabilizers in $G_U$ are finite. Therefore to prove the lemma it suffices to construct a subgroup $H$ in $G_U$ containing $G_p$ and $G_q$ and an epimorphism $f:H\to C_{pq}$ to a cyclic group of order $pq$ with $|f(G_p)|=p, |f(G_q)|=q$ such that $\ker(f)$ is not generated by torsion.

Put $H=\langle G_p,G_q\rangle$. Then by \cite[Lemma 4.8]{Zal23} setting $$\calH = \{H \cap (G_U)_v \mid v \in V(T_U)\}$$
there is a profinite $H$-space $X$ such that
\begin{itemize}
\item [(a)]
$\calH = \{(H)_x \mid x \in X\}$;

\item [(b)] $(H)_x\cap (H)_{x'}=1$ for $x\neq x'$;

\item [(c)] $\bfH = (H,X)$ is a $\calC$-projective pile.
\end{itemize}
Moreover, since $H$ is finitely generated, it is second countable and so by \cite[Remark 4.9]{Zal23} there exists a second countable such $X$. Therefore there exists a continuous section $\sigma:H\backslash X\to X$ (see \cite[Lemma 5.6.7]{RZ}). 

 Hence by Proposition  \ref{alternative} $H$ embeds into a free prosoluble product $H_0=\coprod_{s\in Im(\sigma)} H_s \amalg F$, where $F$ is  free prosoluble, such that  each $H_x=H_s^h$ for some $h\in H_0$.  Choose $x_1,x_2\in X$ such that $H_{x_1}$  contains $G_p$ and $H_{x_2}$  contains $G_q$, observe that $x_1,x_2$ are from different $H$-orbits and let $s_1\neq s_2\in Im(\sigma) $ such that $H_{s_1}$ and $H_{s_2}$ conjugate to $H_{x_1}$ and $H_{x_2}$ respectively. 
 
 By \cite[Theorem 5.3.4]{Rib} $H_0$ is an inverse limit of free prosoluble products with finitely many factors which are quotients free factors of $H_0$ and so there exists such a free prosoluble product $\coprod_{i=1}^n H_{s_i} \amalg F$ with finitely many factors, where $H_{s_1}$ and $H_{s_2}$ are still the free factors. Then factoring out all factors but $H_{s_1}$ and $H_{s_2}$ we arrive at a free prosoluble product $H_{s_1}\amalg H_{s_2}$, where $H_{s_1}$ and $H_{s_2}$ are still conjugate to the images of $H_{x_1}$ and $H_{x_2}$ respectively. 
 
 Now define a natural  epimorphism  $H_{s_1}\amalg H_{s_2}\to H_{s_1}\times H_{s_2}$ that sends $H_{s_1}$ onto $H_{s_1}$ and $H_{s_2}$ onto $H_{s_2}$ identically. The kernel of this natural epimorphism is free prosoluble (by  \cite[Theorem 9.1.6]{RZ}). Hence the kernel of $f$ factors through the kernel of the restriction of this epimorphism to the image $M_U$ of $H_U$ in $H_{s_1}\amalg H_{s_2}$ which is torsion-free. Therefore the kernel of $f$ is not generated by torsion as required. Moreover, as non-trivial subgroups of $H_{s_1}, H_{s_2}$ in $H_{s_1}\amalg H_{s_2}$ do not commute up to conjugation (cf. \cite[Theorem 9.1.2]{Rib}),  hence   non-trivial subgroups of $H_{x_1}$ and $H_{x_2}$ of  $M_U$ do not commute up to conjugation as well.
\end{proof}

\section{Closed subgroups of fundamental groups of finite graphs of profinite groups with finite edge groups}\label{sec4}

In this section we will conclude an important step to prove Theorem \ref{ori}. The main theorem of this section gives us necessary conditions for a prosoluble group to be a subgroup of the fundamental group of an injective $k$-acylindrical finite graph of profinite groups with finite edge groups. The fundamental groups of a finite graph of profinite groups are always profinite (or pro-$\calC$) in this section unless otherwise said; if vertex groups are finitely generated they are just profinite completion of the classical Bass-Serre fundamental groups.
\\

This lemma is a general version of \cite[Proposition 2.16]{CZ}:

\begin{prop}\label{free product splitting}
Let $G$ be the fundamental pro-$\calC$ group of an injective reduced finite graph of pro-$\calC$ groups $(\calG,\Gamma)$. Suppose that there is an edge $e$ of $\Gamma$  such that $G(e) = 1$ and $G(d_i(e)) \neq 1$ for $i=0,1$. Then $G$ splits as a free pro-$\calC$ product of either   fundamental groups of two subgraphs of groups of $(\calG,\Gamma)$ or as the fundamental group of the subgraph of groups of $(\calG,\Gamma)$ and $\widehat\Z_\calC$.
\end{prop}

\begin{proof}
Let
$$\begin{tikzpicture}[-latex, auto, node distance=3cm,
semithick]
\node[circle,draw=red, fill=red, inner sep=0pt, minimum size=1.5mm] at (0,0) {};
\node[circle,draw=red, fill=red, inner sep=0pt, minimum size=1.5mm] at (3,0) {};
\node[circle,draw=black, fill=black, inner sep=0pt, minimum size=1.5mm] at (5,1) {};
\node[circle,draw=black, fill=black, inner sep=0pt, minimum size=1.5mm] at (5,-1) {};
\node[circle,draw=black, fill=black, inner sep=0pt, minimum size=1.5mm] at (4,2) {};
\node[circle,draw=black, fill=black, inner sep=0pt, minimum size=1.5mm] at (4,-2) {};
\node[circle,draw=black, fill=black, inner sep=0pt, minimum size=1.5mm] at (-2,1) {};
\node[circle,draw=black, fill=black, inner sep=0pt, minimum size=1.5mm] at (-2,-1) {};
\node[circle,draw=black, fill=black, inner sep=0pt, minimum size=1.5mm] at (-1,2) {};
\node[circle,draw=black, fill=black, inner sep=0pt, minimum size=1.5mm] at (-1,-2) {};
\draw[color=red] node[above right] {} (0,0) -- node[below] {$G(e)$} (3,0) node[above left] {};
\draw (3,0) -- (4,2);
\draw (3,0) -- (5,1);
\draw (3,0) -- (5,-1);
\draw (3,0) -- node[below left] {} (4,-2) node[below right] {};
\node at (5,0) {$\vdots$};
\draw (0,0) -- node[above right] {} (-1,2) node[above left] {};
\draw (0,0) -- (-2,1);
\draw (0,0) -- (-2,-1);
\draw (0,0) -- (-1,-2);
\node at (-2,0) {$\vdots$};
\end{tikzpicture}$$
be a slice of the graph $\Gamma$.  

After removing the edge $e$ 
we have  two possibilities: either the resulting graph is connected or not connected. We shall prove that $G$ splits as a free profinite product in each of the possibilities.

Case 1. After removing the edge $e$ the graph $\Gamma$ become not connected:
$$\begin{tikzpicture}[-latex, auto, node distance=3cm,
semithick]
\node[circle,draw=red, fill=red, inner sep=0pt, minimum size=1.5mm] at (0,0) {};
\node[circle,draw=red, fill=red, inner sep=0pt, minimum size=1.5mm] at (3,0) {};
\node[circle,draw=black, fill=black, inner sep=0pt, minimum size=1.5mm] at (5,1) {};
\node[circle,draw=black, fill=black, inner sep=0pt, minimum size=1.5mm] at (5,-1) {};
\node[circle,draw=black, fill=black, inner sep=0pt, minimum size=1.5mm] at (4,2) {};
\node[circle,draw=black, fill=black, inner sep=0pt, minimum size=1.5mm] at (4,-2) {};
\node[circle,draw=black, fill=black, inner sep=0pt, minimum size=1.5mm] at (-2,1) {};
\node[circle,draw=black, fill=black, inner sep=0pt, minimum size=1.5mm] at (-2,-1) {};
\node[circle,draw=black, fill=black, inner sep=0pt, minimum size=1.5mm] at (-1,2) {};
\node[circle,draw=black, fill=black, inner sep=0pt, minimum size=1.5mm] at (-1,-2) {};
\draw (3,0) -- (4,2);
\draw (3,0) -- (5,1);
\draw (3,0) -- (5,-1);
\draw (3,0) -- node[below left] {} (4,-2) node[below right] {};
\node at (5,0) {$\vdots$};
\draw (0,0) -- node[above right] {} (-1,2) node[above left] {};
\draw (0,0) -- (-2,1);
\draw (0,0) -- (-2,-1);
\draw (0,0) -- (-1,-2);
\node at (-2,0) {$\vdots$};
\end{tikzpicture}$$
Let $\Gamma_1, \Gamma_2$ be the connected components and $G_1, G_2$ its fundamental groups, respectively. They are embedded in $G$ by Lemma \ref{aux}. It follows that $G = G_1 \amalg G_2$. 

\medskip
Case 2. After removing $e$ the graph remains connected
$$\begin{tikzpicture}[-latex, auto, node distance=3cm,
semithick]
\node[circle,draw=green, fill=green, inner sep=0pt, minimum size=1.5mm] at (0,0) {};
\node[circle,draw=green, fill=green, inner sep=0pt, minimum size=1.5mm] at (3,0) {};
\node[circle,draw=green, fill=green, inner sep=0pt, minimum size=1.5mm] at (5,1) {};
\node[circle,draw=green, fill=green, inner sep=0pt, minimum size=1.5mm] at (5,-1) {};
\node[circle,draw=green, fill=green, inner sep=0pt, minimum size=1.5mm] at (4,2) {};
\node[circle,draw=green, fill=green, inner sep=0pt, minimum size=1.5mm] at (4,-2) {};
\node[circle,draw=green, fill=green, inner sep=0pt, minimum size=1.5mm] at (-2,1) {};
\node[circle,draw=green, fill=green, inner sep=0pt, minimum size=1.5mm] at (-2,-1) {};
\node[circle,draw=green, fill=green, inner sep=0pt, minimum size=1.5mm] at (-1,2) {};
\node[circle,draw=green, fill=green, inner sep=0pt, minimum size=1.5mm] at (-1,-2) {};
\path[dashed] (-1,2) edge[color=green,bend left] node[] {} (4,2);
\draw[color=green] (3,0) -- (4,2);
\draw (3,0) -- (5,1);
\draw (3,0) -- (5,-1);
\draw[color=green] (3,0) -- node[below left] {} (4,-2) node[below right] {};
\node at (5,0) {$\vdots$};
\draw[color=green] (0,0) -- node[above right] {} (-1,2) node[above left] {};
\draw (0,0) -- (-2,1);
\draw (0,0) -- (-2,-1);
\draw[color=green] (0,0) -- (-1,-2);
\node at (-2,0) {$\vdots$};
\end{tikzpicture}$$
then we can choose a maximal subtree $T$ not containing $e$ and use it to calculate the fundamental group $G_1$ of the graph of groups restricted to $\Gamma - \{e\}$. So, $G$ is the fundamental group of the loop with edge correspondent to $G(e)$ and vertex correspondent to $G_1$ so that $G = \HNN(G_1,G(e),t) = G_1 \amalg \cic{t}$, since $G(e) = 1$.
$$\begin{tikzpicture}[]
\node[circle,draw=black, fill=black, inner sep=0pt, minimum size=1.5mm] at (0,0) {};
\path[->] (0,0) edge[color=red,scale=6,in=-150,out=-210,loop] node[left] {$G(e)$} (0,0) node[right] {$G_1$};
\end{tikzpicture}$$
\end{proof}

\begin{theo}\label{subfreeprod}
Let $(\calG,\Gamma)$ be an injective $k$-acylindrical finite graph of profinite groups with finite edge groups and $G$ its profinite fundamental group. Suppose  $\calC$ does not contain all finite groups. If $H$ is a pro-$\calC$ subgroup of $G$, then there exists $e\in E(\Gamma)$ and a conjugate $H_0$ of $H$ such that $H_0\cap G(e)=1$ and for any such $e$ 
$$H_0 \leq U_1 \amalg U_2$$
where $U_1 \simeq \Pi_1(\calU,\Delta)$   is the profinite fundamental group of an injective $k$-acylindrical finite graph  of profinite groups  with  finite edge groups, $\Delta$  a connected component of  $\Gamma\setminus \{e\}$ and  $U_2$ is either isomorphic to $\widehat{\Z}$ or similarly is the profinite fundamental group of an injective $k$-acylindrical finite graph of profinite groups with finite edge groups.
\end{theo}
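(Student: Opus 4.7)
The plan is in two stages: first, locate an edge $e\in E(\Gamma)$ and a conjugate $H_0$ of $H$ with $H_0\cap G(e)=1$; then use the splitting of $G$ along $e$ together with the action of $H_0$ on the associated Bass-Serre tree to embed $H_0$ into $U_1\amalg U_2$ via the projectivity machinery of Section~\ref{sec2}.

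For the first stage, consider the action of $H$ on the standard tree $S=S(G)$. Since $G$ acts $k$-acylindrically on $S$, so does $H\leq G$. Because the edge groups of $(\calG,\Gamma)$ are finite and $\calC$ does not contain all finite groups, Theorem~\ref{new3} applies to the pro-$\calC$ group $H$: the maximal abstract subgraph $D\subset S$ whose edges have non-trivial $H$-stabilizer has diameter at most $2k$. Provided $\Gamma$ is non-degenerate (so that $S$ has infinite diameter), Corollary~\ref{trivial edge} produces an edge $\tilde e \in E(S)\setminus D$ with $H_{\tilde e}=1$. Since every edge of $S$ is a coset $gG(e)$ for some $g\in G$ and $e\in E(\Gamma)$, writing $\tilde e=gG(e)$ and setting $H_0=g^{-1}Hg$ yields $H_0\cap G(e)=1$.

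For the second stage, split $G$ along $e$: if $e$ separates $\Gamma$ into connected components $\Delta_1,\Delta_2$, set $U_i=\pi_1(\calG|_{\Delta_i},\Delta_i)\leq G$ via Lemma~\ref{aux}, giving $G=U_1\amalg_{G(e)}U_2$; if $e$ is a loop-type edge, set $U_1=\pi_1(\calG|_{\Gamma\setminus\{e\}},\Gamma\setminus\{e\})$ and $U_2=\overline{\langle t\rangle}\simeq\widehat{\Z}$ with $t$ the stable letter, so that $G=\HNN(U_1,G(e),t)$. Let $S_e$ be the Bass-Serre tree of this two-vertex (or one-vertex-with-loop) splitting. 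The group $H_0$ acts on $S_e$ and the canonical edge $G(e)\in E(S_e)$ has trivial $H_0$-stabilizer by construction. Applying Theorem~\ref{new3} together with Corollary~\ref{G-collapse} to the $H_0$-action on $S_e$ yields, after collapsing the bounded subgraph of edges with non-trivial $H_0$-stabilizer, a simply connected $H_0$-equivariant quotient $T$ of $S_e$ on which $H_0$ acts with trivial edge stabilizers and vertex stabilizers of the form $H_0\cap U_i^g$ for various $g\in G$.

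With trivial edge stabilizers in hand, Theorem~\ref{piletheo} and Proposition~\ref{alternative} applied to the pile $(H_0,V(T))$ embed $H_0$ into a free pro-$\calC$ product $\coprod_{s}(H_0\cap U_{i_s}^{g_s})\amalg F$, where $F$ is a free pro-$\calC$ factor. Each factor $H_0\cap U_{i_s}^{g_s}$ is conjugate into $U_{i_s}$, so this free product maps naturally into $U_1\amalg U_2$, and the trivial-intersection hypothesis combined with Corollary~\ref{amalgam} makes this map injective, yielding $H_0\leq U_1\amalg U_2$. The main obstacle will be verifying the $\calC$-projectivity of the pile $(H_0,V(T))$ needed to invoke Proposition~\ref{alternative}: Theorem~\ref{piletheo}(i) provides this automatically when $H_0$ is prosoluble, but in the general pro-$\calC$ setting one must establish projectivity by solving finite rigid embedding problems directly, leveraging the finite combinatorics of $\Gamma$ and the finiteness of the edge groups.
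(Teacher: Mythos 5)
Your first stage matches the paper's: act on the standard tree $S=S(G)$, invoke Corollary \ref{trivial edge} to find an edge $\tilde e$ of $S$ with trivial $H$-stabilizer, and conjugate so that $H_0\cap G(e)=1$. The second stage, however, diverges from the paper and contains a genuine gap. The paper does not decompose $H_0$ itself at all. Instead it uses that $G(e)$ is \emph{finite} and $H\cap G_{\tilde e}=1$ to find an \emph{open} subgroup $U_H$ of $G$ containing $H$ with $(U_H)_{\tilde e}=1$; by \cite[Corollary 4.5]{ZM-89} this open subgroup is again the fundamental group of a finite graph of profinite groups whose edge and vertex groups are the $U_H$-stabilizers in $S$, one of whose edge groups is now trivial, so Proposition \ref{free product splitting} splits $U_H$ as a genuine free profinite product $U_1\amalg U_2$ containing $H_0$. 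No projectivity or pile machinery is needed, and the statement follows for arbitrary pro-$\calC$ subgroups $H$.

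Your route instead collapses the Bass--Serre tree of the splitting of $G$ along $e$ and then tries to invoke Theorem \ref{piletheo} and Proposition \ref{alternative} on the pile $(H_0,V(T))$. This fails at exactly the point you flag yourself: Theorem \ref{piletheo}(i) establishes $\calS$-projectivity only for \emph{prosoluble} groups, whereas Theorem \ref{subfreeprod} is about arbitrary pro-$\calC$ subgroups with $\calC$ extension-closed and not containing all finite groups. Deferring the projectivity of $(H_0,V(T))$ to ``solving finite rigid embedding problems directly'' is not a proof, and there is no result in the paper supplying it. A secondary problem: even granting the embedding of $H_0$ into $\coprod_s (H_0\cap U_{i_s}^{g_s})\amalg F$, your concluding step that this ``maps naturally into $U_1\amalg U_2$'' and is injective by Corollary \ref{amalgam} does not parse, since in your setup $G=U_1\amalg_{G(e)}U_2$ is an amalgam over a possibly non-trivial finite group, so the free product $U_1\amalg U_2$ is not a subgroup of $G$ and Corollary \ref{amalgam} (which concerns two subgroups meeting the amalgamated subgroup trivially) does not apply to a many-factor product of conjugates. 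The missing idea is the passage to the open overgroup $U_H$ and its ZM-89 decomposition; with that in hand the whole projectivity apparatus becomes unnecessary.
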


\begin{proof}
Consider the action of $H$ on the standard profinite tree $S=S(G)$. By Corollary \ref{trivial edge} there is an edge $e$ of $S$ for which $H_{e}$ is trivial. Then  there is an open subgroup $U_H$ of $G$ containing $H$ such that $(U_H)_{e} = 1$. By \cite[Corollary 4.5]{ZM-89} $U_H$ is the profinite fundamental group of a finite graph of groups $(\calU, \Delta)$ whose vertex and edge groups are $U_H$-stabilizers of vertices and edges of $S$. Thus one of the edge groups $U(\bar e)$ of $(\calU, \Delta)$ is trivial. Assuming, w.l.o.g., that $(\calU, \Delta)$ is reduced we can apply Proposition \ref{free product splitting} to deduce that $U_H$ splits as a free profinite product. As $U(\bar e)$ is conjugate by some element $g\in G$ into some edge group of $(\calG,\Gamma)$ we may conjugate $H$ and $U$ by this element to get the required $H_0$. This finishes the proof.
\end{proof}

The second author \cite[Theorem 1.1]{Zal23} described the prosoluble subgroups of free profinite products. Using this we can deduce the following

\begin{coro}\label{coro2}
One of the following holds for $H_0$ if it is prosoluble:
\begin{enumerate}[(i)]
\item There is a prime $p$ such that $H_0 \cap u U_i u^{-1}$ is pro-$p$ for every $i=1,2$ and $u \in U_1\  \amalg\  U_2$; besides if $H$ is finitely generated then $\sum_{i,u} d(H_0 \cap u U_i u^{-1})\leq d(H)$.

\item $H_0 \simeq \widehat{\Z}_{\pi} \rtimes C$ is a profinite Frobenius group.

\item A conjugate of $H_0$ is a subgroup of $U_i$ for some $i=1,2$.
\end{enumerate}
\end{coro}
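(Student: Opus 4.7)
The plan is to apply \cite[Theorem 1.1]{Zal23}, which classifies the finitely generated prosoluble subgroups of an arbitrary free profinite product, directly to the inclusion $H_0 \leq U_1 \amalg U_2$ provided by Theorem \ref{subfreeprod}. That theorem yields exactly a trichotomy: either a conjugate of $H_0$ lies in one of the factors $U_i$, which is case (iii); or $H_0$ is a profinite Frobenius group $\widehat{\Z}_\pi \rtimes C$, which is case (ii); or the only remaining possibility is that there is a single prime $p$ for which every intersection $H_0 \cap u U_i u^{-1}$ is pro-$p$, which is case (i).

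For the rank bound in case (i), I would look at the action of $H_0$ on the standard profinite tree $S$ of the splitting $U_1 \amalg U_2$. Edge stabilizers of this action are trivial, while the non-trivial vertex stabilizers are exactly the intersections $H_0 \cap u U_i u^{-1}$ with $i \in \{1,2\}$ and $u \in U_1 \amalg U_2$. By Theorem \ref{piletheo}(i), the pile $(H_0, V(S))$ is $\calS$-projective, and Proposition \ref{alternative} then embeds $H_0$ into a free prosoluble product of these vertex stabilizers together with a free prosoluble group of rank $d(H_0)$. Since $H_0$ is a conjugate of $H$, one has $d(H_0) = d(H)$, and counting generators across the free prosoluble factors gives $\sum_{i,u} d(H_0 \cap u U_i u^{-1}) \leq d(H)$.

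I expect the main obstacle to be the verification that the pile $(H_0, V(S))$ really admits a continuous section $H_0 \backslash V(S) \to V(S)$, which is needed to invoke Proposition \ref{alternative}; here one uses the fact that $H_0$ is finitely generated and the orbit space decomposes into finitely many clopen orbits, so the section exists by elementary topological arguments. Once that is in place, the whole corollary is essentially a bookkeeping consequence of \cite[Theorem 1.1]{Zal23} combined with the machinery of $\calS$-projective piles already assembled in Section \ref{sec2}.
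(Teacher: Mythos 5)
Your proposal matches the paper's (implicit) argument: the corollary is stated as a direct consequence of applying \cite[Theorem 1.1]{Zal23} to the embedding $H_0 \leq U_1 \amalg U_2$ furnished by Theorem \ref{subfreeprod}, which is exactly your first paragraph. Your additional reconstruction of the rank bound via piles and Proposition \ref{alternative} is consistent with how the paper handles the analogous bound elsewhere (it cites \cite[Corollary 1.6]{Zal23} for $\sum_v d(H_v)\leq d(H)$), so no genuinely different route is being taken.
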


If $H$ is torsion-free, then item $(ii)$ of Corollary \ref{coro2} does not occur. 

\bigskip
It is not difficult to construct an example satisfying Theorem \ref{subfreeprod}.

\begin{ex}
Let $G_1, G_2$ be profinite Frobenius groups (or any finite Frobenius groups) with isomorphic complement $C$. Consider the group $G = G_1 \amalg_{C} G_2$. Then $C$ is malnormal in $G$ and so $G$ satisfies the hypothesis of Theorem \ref{subfreeprod}. Then any prosoluble subgroup $H$ has the structure stated in Corollary \ref{coro2}. 
\end{ex}

The next proposition is of independent interest.

\begin{prop}\label{distance}
Let $G=\Pi_1(\calG,\Gamma)$ be the profinite fundamental group of a $k$-acylindrical finite graph of profinite groups. Let $v,w \in V(\Gamma)$ be vertices at distance at least $2k+1$. Then $\cic{G(v), G(w)}=G(v)\amalg G(w)$. 
\end{prop}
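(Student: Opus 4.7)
My plan is to pass to the standard pro-$\calC$ tree $S$ of $G$, locate a ``middle'' edge on the geodesic joining lifts of $v$ and $w$ whose $G$-stabilizer meets both $G(v)$ and $G(w)$ trivially, and invoke Corollary~\ref{amalgam} on the amalgamated splitting of $H := \cic{G(v), G(w)}$ induced by its action on $S$ at that edge. To set this up, choose $\tilde v, \tilde w \in V(S)$ with $G_{\tilde v} = G(v)$ and $G_{\tilde w} = G(w)$; since $\Gamma = G\bl S$, the projection is distance non-increasing, so $n := d_S(\tilde v, \tilde w) \geq d_\Gamma(v,w) \geq 2k+1$. Write the geodesic as $\Sigma = (\tilde v = \tilde v_0, \tilde e_1, \tilde v_1, \ldots, \tilde e_n, \tilde v_n = \tilde w)$. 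For any $i$ with $k+1 \leq i \leq n-k$ (a non-empty range), an element of $G(v) \cap G_{\tilde e_i}$ fixes $\tilde v$ together with both endpoints of $\tilde e_i$ and hence fixes the geodesic $[\tilde v, \tilde v_i]$ of length $i > k$; by $k$-acylindricity the intersection is trivial, and symmetrically $G(w) \cap G_{\tilde e_i} = 1$. Fix such an $i$.

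Next, apply Lemma~\ref{translated graph} to $H$ and $\Sigma$ to get $H = \Pi_1(\calH, \Delta)$, where $\Delta = H\bl H\Sigma$ is a finite tree. Since $v \neq w$ in $\Gamma$, the lifts $\tilde v, \tilde w$ lie in distinct $G$-orbits, and therefore in distinct $H$-orbits, so $[\tilde v] \neq [\tilde w]$ in $\Delta$. The geodesic $\Sigma$ is the unique simple path in the subtree $H\Sigma \subseteq S$ from $\tilde v$ to $\tilde w$ and it contains $\tilde e_i$, so removing the orbit $H\tilde e_i$ from $H\Sigma$ disconnects $\tilde v$ from $\tilde w$; passing to the quotient, the single edge $[\tilde e_i]$ is a bridge of $\Delta$ separating $[\tilde v] \in \Delta_L$ from $[\tilde w] \in \Delta_R$.

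The standard bridge decomposition of fundamental groups of profinite graphs of groups over a tree (via Lemma~\ref{aux}) then provides
$$H \;=\; H_L \amalg_{\,H \cap G_{\tilde e_i}} H_R,$$
where $H_L = \Pi_1(\calH|_{\Delta_L}, \Delta_L)$ and $H_R = \Pi_1(\calH|_{\Delta_R}, \Delta_R)$ are identified with the $H$-stabilizers of the connected components of $H\Sigma \setminus H\tilde e_i$ containing $\tilde v$ and $\tilde w$ respectively. Since $G(v) = G_{\tilde v}$ fixes $\tilde v$ it stabilizes that component, so $G(v) \leq H_L$, and likewise $G(w) \leq H_R$. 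Combined with the trivial intersections $G(v) \cap (H \cap G_{\tilde e_i}) = 1 = G(w) \cap (H \cap G_{\tilde e_i})$ established in the first paragraph, Corollary~\ref{amalgam} yields $\cic{G(v), G(w)} = G(v) \amalg G(w)$; since the left-hand side is $H$ by definition, this gives the claim.

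The part requiring the most care is the bridge decomposition with $G(v)$ and $G(w)$ on opposite sides of the amalgam; this rests on the standard observation that a vertex stabilizer stabilizes the connected component of the complement of any $H$-invariant set of edges of $S$ that contains the vertex, together with the identification of the fundamental groups of the two halves of a tree of groups (cut at a bridge) as the stabilizers of those components.
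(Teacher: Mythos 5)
Your overall strategy is the same as the paper's: locate a ``middle'' edge on the path joining $v$ and $w$ whose stabilizer meets $G(v)$ and $G(w)$ trivially by $k$-acylindricity, split the group generated by the vertex groups along the path as an amalgam over that edge group, and conclude with Corollary~\ref{amalgam}. The paper does this entirely inside the finite graph $\Gamma$: it restricts the graph of groups to a shortest path $[v,w]$, uses Lemma~\ref{aux} to embed its fundamental group in $G$ and to split it over the middle edge group as $G_1\amalg_{G(e)}G_2$ with $G(v)\leq G_1$, $G(w)\leq G_2$, and then applies Corollary~\ref{amalgam}. No quotient of a tree appears, so the separation of $v$ from $w$ by the middle edge is immediate (a segment minus an interior edge).

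Your tree-level version has one step that is asserted but not justified, and it is exactly the step you flag as delicate: that ``passing to the quotient, the single edge $[\tilde e_i]$ is a bridge of $\Delta$ separating $[\tilde v]$ from $[\tilde w]$.'' This is not automatic. The quotient of a geodesic by the group generated by its vertex stabilizers can fold: for $C_2\amalg C_2$ acting on its standard tree, a geodesic of length $3$ maps onto a single edge of the quotient, identifying vertices at even distance. In general, after such folding the images of the two endpoints can lie in the same component of $\Delta$ minus an edge, and then Corollary~\ref{amalgam} cannot be applied. The claim does hold in your situation, but for a reason you never invoke: the maximal subtree $T$ of $\Gamma$ lifts isomorphically to the finite subtree $\{1\cdot\calG(m): m\in T\}$ of $S$, which contains both canonical lifts $\tilde v=1\cdot\calG(v)$ and $\tilde w=1\cdot\calG(w)$; hence the geodesic $\Sigma$ lies inside this transversal, its vertices and edges lie in pairwise distinct $G$-orbits, and therefore $\Sigma$ embeds in $\Delta=H\bl H\Sigma$, forcing $\Delta$ to be a segment in which $[\tilde e_i]$ genuinely separates $[\tilde v]$ from $[\tilde w]$. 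A second, minor, repair: Lemma~\ref{translated graph} decomposes $\cic{G_u : u\in V(\Sigma)}$, the group generated by \emph{all} vertex stabilizers along $\Sigma$, not $H=\cic{G(v),G(w)}$; you should split this larger group and then apply Corollary~\ref{amalgam} to the subgroups $G(v)$, $G(w)$ of its two factors, which still yields $\cic{G(v),G(w)}=G(v)\amalg G(w)$. With these two points supplied, your argument is complete and is in substance the paper's proof transported to the standard tree.
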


\begin{proof}
Let $[v,w]$ be a shortest path between $v$ and $w$. Let $G(v,w)$ be the fundamental group of the finite graph of profinite groups restricted to $[v,w]$. By Lemma \ref{aux} $G(v,w)$ is a subgroup of $G$ generated by vertex groups of $[v,w]$.  Let $e$ be an edge of $[v,w]$  at distance $>k$ from   $w$ and $v$. Then $G(v)\cap G(e)=1=G(w)\cap G(e)$. Note that $G(v,w)$ splits over $G(e)$ as a free amalgamated profinite product $G(v,w)=G_1\amalg_{G(e)} G_2$, where $G_1$, $G_2$ are  profinite groups generated by vertex groups of the connected components  of $[v,w]\setminus \{e\}$ (see Lemma \ref{aux}), so that $G(v)\leq G_1$ and $G(w)\leq G_2$. By Corollary \ref{amalgam}, $\cic{G(v),G(w)}=G(v)\amalg G(w)$ as required.
\end{proof}

For the proof of the next theorem we shall need two simple lemmas.

\begin{lemma}\label{intersection tildes}
Let $G$ be a pro-$\calC$ group acting on a pro-$\calC$ tree  $T$. Let $\calU=\{U_i\}$ be a filtered from below family of subgroups of $G$ with $H=\bigcap_{U\in \calU} U$. Then $\widetilde H=\bigcap_{U\in \calU} \widetilde U$.\end{lemma}

\begin{proof}
By \cite[Proposition 4.1.1]{Rib} $\widetilde U \bl T$ is a pro-$\calC$ tree and so
$$\left(\bigcap_{U\in \calU}\widetilde{U}\right) \bl T=\varprojlim_{U\in \calU} \widetilde U \bl T$$
is a pro-$\calC$ tree (cf. \cite[Proof of Lemma 4.2.7]{Rib}). Hence by \cite[Proof of Lemma 4.2.7]{Rib} $\bigcap_{U\in \calU} \widetilde U$ is generated by the vertex stabilizers and so equal to $\widetilde H$.
\end{proof}

\begin{lemma}\label{C_{pq}}
Let $G$ be an extension  of an abstract free group $F$ by a group $C_{pq}$ of order $pq$ for primes $p\neq q$. Suppose that $G$ contains  finite subgroups  $A$ and $B$  of order $p$ and $q $ that  do not  commute up to conjugation. Then the natural epimorphism $f:G\to C_{pq}$ with kernel $F$ factors through an abstract free product  $ C_p \ast C_q$.\end{lemma}

\begin{proof} By  the results of Karrass, Pietrowski, Solitar, Cohen and Scott \cite{KPS73, Coh06, Sco74}) $G$ splits  as the fundamental group $\Pi_1(\calG, \Gamma)$ of a finite graph of finite groups and assuming w.l.o.g. that $(\calG, \Gamma)$ is reduced, one sees that the vertex groups of $(\calG,\Gamma)$ are of order at most $pq$ and edge groups are of order at most $p$ or $q$. 

As any finite group is conjugate into a vertex group, we shall assume w.l.o.g. that $A$ and $B$ are in the vertex groups of $(\calG, \Gamma)$. Let $(\calG, \Gamma_A)$ be the maximal subgraph of groups of $(\calG,\Gamma)$ containing $A$ in its every vertex group and $(\calG, \Gamma_B)$ is the maximal subgraph of groups of $(\calG,\Gamma)$ containing $B$ in its every vertex group. Since $A$ and $B$ do not commute up to conjugation, $\Gamma_A$ and $\Gamma_B$ do not intersect. Then  factoring out the normal closure of all subgroups of order $q$ contained in the vertex groups of $(\calG, \Gamma_A)$ and all subgroups of order $p$ contained in the vertex groups of $(\calG, \Gamma_B)$ we obtain the fundamental group $\Pi_1(\calH, \Gamma)$, where $A$ is still contained in a vertex group of  $(\calH, \Gamma_A)$, $B$  is still contained in the vertex groups of  $(\calH, \Gamma_B)$ but neighboring edge groups with trivial edge groups of $\Gamma_A$ and $\Gamma_B$ are trivial. This means that $A$ and $B$ are free factors of $\Pi_1(\calH, \Gamma)$ and so  so the group $\Pi_1(\calH, \Gamma)$ can be mapped to $A \ast B$.
\end{proof}

\begin{theo}\label{pop generalization}
Let $(\calG,\Gamma)$ be a $k$-acylindrical finite graph of profinite groups, $G$ its profinite fundamental group and $S=S(G)$ its standard profinite tree. Let $H$ be a  subgroup of $G$ such that there exist some non-trivial  non-conjugate vertex stabilizers  $H_v$, $H_w$ for $v,w\in S$  which are not both pro-$p$ for any prime $p$. Then $H$ is not prosoluble.
\end{theo}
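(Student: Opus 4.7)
Proof proposal. Suppose for contradiction that $H$ is prosoluble. The plan is to pass to an open subgroup of $G$ in which two vertex groups containing $H_v:=H\cap\calG(v)$ and $H_w:=H\cap\calG(w)$ are at distance $\geq 2k+1$ (so that they generate a free profinite product), realise $\cic{H_v,H_w}$ inside this free product, and then exhaust the three alternatives for prosoluble subgroups of free profinite products given by \cite[Theorem 1.1]{Zal23} (stated here as Corollary \ref{coro2}), deriving a contradiction in each case.

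\emph{Setup.} Since $H_v$ and $H_w$ are non-trivial, distinct (being non-conjugate) and not both of order two (else both would be pro-$2$), Lemma \ref{distance in the quotient} applied with the variety $\calC$ of finite soluble groups yields an open subgroup $U\geq H$ such that the images $v',w'$ of $v,w$ in $\Delta=U\bl S$ lie at distance at least $2k+1$. The $k$-acylindrical action of $G$ on $S$ restricts to a $k$-acylindrical action of $U$, and by \cite[Corollary 4.5]{ZM-89} $U=\Pi_1(\calU,\Delta)$ with $U(v')$ and $U(w')$ being the $U$-stabilisers of the corresponding vertices of $S$. Proposition \ref{distance} then gives $\cic{U(v'),U(w')}=U(v')\amalg U(w')$, and since $H_v\leq U(v')$ and $H_w\leq U(w')$, $\cic{H_v,H_w}$ is a prosoluble subgroup of this free profinite product, to which Corollary \ref{coro2} applies.

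\emph{Easy alternatives.} If every intersection of $\cic{H_v,H_w}$ with a conjugate of $U(v')$ or $U(w')$ is pro-$p$ for a single prime $p$ (alternative~(i)), specialising to the trivial conjugator forces $H_v$ and $H_w$ to both be pro-$p$, contradicting the hypothesis. If a conjugate of $\cic{H_v,H_w}$ sits inside $U(v')$ or $U(w')$ (alternative~(iii)), then, using the malnormality of factors in a free profinite product (distinct vertex stabilisers of the standard tree $D$ of $U(v')\amalg U(w')$ intersect trivially because $D$ has trivial edge stabilisers), one of $H_v,H_w$ is forced to be trivial, again a contradiction.

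\emph{Main obstacle: the Frobenius alternative.} Here $\cic{H_v,H_w}\cong\widehat{\Z}_\pi\rtimes C$. I would first argue that $\widehat{\Z}_\pi$ fixes no vertex of $D$: otherwise, by normality of $\widehat{\Z}_\pi$, the whole $\cic{H_v,H_w}$ would fix the same (necessarily unique, by triviality of edge stabilisers) vertex, placing us in alternative~(iii). Next, I would show that $\widehat{\Z}_\pi$ meets every vertex stabiliser of $D$ trivially: if $0\neq z_0\in\widehat{\Z}_\pi\cap U(v')^g$, the fixed-point subtree $D^{z_0}$ (which is a subtree by \cite[Theorem 4.1.5]{Rib}) contains $gU(v')$ and is preserved by the abelian group $\widehat{\Z}_\pi$; as $\widehat{\Z}_\pi$ fixes no vertex, its orbit of $gU(v')$ contributes more than one vertex to $D^{z_0}$, hence an edge, violating triviality of edge stabilisers. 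Thus $H_v\cap\widehat{\Z}_\pi=H_w\cap\widehat{\Z}_\pi=1$, so both $H_v$ and $H_w$ inject into the finite cyclic group $C$ and each lies in some conjugate $C^z$ with $z\in\widehat{\Z}_\pi$. Being finite, $C$ fixes some vertex $v_*\in D$; assume $v_*$ is of $U(v')$-type (the $U(w')$-case is symmetric). Then any $C^{z_w}\supseteq H_w$ fixes $z_wv_*$, still of $U(v')$-type, while $H_w\leq U(w')$ fixes the $U(w')$-type vertex $U(w')\in D$. Being of different types these two vertices are distinct, so $H_w$ fixes a non-trivial path and hence an edge with trivial stabiliser, forcing $H_w=1$ and contradicting the hypothesis. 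All three alternatives therefore fail, and $H$ is not prosoluble.
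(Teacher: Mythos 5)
Your proof is correct, but it takes a genuinely different route from the paper's. Both arguments share the same setup: use Lemma \ref{distance in the quotient} to find an open $U\geq H$ in which the images of $v,w$ are at distance $\geq 2k+1$, so that Proposition \ref{distance} places $\cic{H_v,H_w}$ inside the free profinite product $U(v')\amalg U(w')$. From there the paper stays elementary: it arranges epimorphisms $U(v')\to C_p$, $U(w')\to C_q$ with $p\neq q$ that are non-trivial on $H_v$ and $H_w$, embeds $C_p,C_q$ into $S_n$ so that any pair of conjugates generates a non-soluble subgroup (Jarden's lemmas), and extends these via Corollary \ref{alternating quotient} to an epimorphism $f\colon U\to S_n$; since $f(H)$ contains conjugates of $C_p$ and $C_q$, it is non-soluble, so $H$ is not prosoluble. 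You instead assume prosolubility and run the trichotomy for prosoluble subgroups of free profinite products (\cite[Theorem 1.1]{Zal23}), killing each alternative by fixed-point arguments on the standard tree of $U(v')\amalg U(w')$; the Frobenius case is the delicate one and you handle it correctly by showing $\widehat{\Z}_\pi$ meets every vertex stabiliser trivially and then playing the two vertex types against each other. The paper's route produces an explicit finite non-soluble quotient of $H$ and avoids the classification theorem; yours leans on heavier machinery that the paper already imports elsewhere, at the cost of the Frobenius analysis. Two small presentational points: Corollary \ref{coro2} is phrased for the specific subgroup $H_0$ of Theorem \ref{subfreeprod}, so you should cite \cite[Theorem 1.1]{Zal23} directly for $\cic{H_v,H_w}\leq U(v')\amalg U(w')$; and the step placing $H_v$, $H_w$ into conjugates of the complement $C$ deserves a word of justification (profinite Schur--Zassenhaus, using that $|C|$ is coprime to every $p\in\pi$).
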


\begin{proof}
Suppose on the contrary $H$ is prosoluble. Since $H_v$ and $H_w$ are not both pro-$p$ for a fixed prime $p$, there exist primes $p\neq q$ such that  groups of order $p$ and $q$ are subquotients of $H_v$ and $H_w$ respectively.   Then by Corollary \ref{kernel not generated by torsion} there exists an open subgroup $L$ of $H$ that admits an epimorphism $f_L:L\to C_{pq}$ to the cyclic group of order $pq$ with the kernel not generated by  vertex stabilizers such that $f_L(L_v)$ has order $p$ and $f_L(L_w)$ has order $q$. Thus, w.l.o.g., we may assume that $H=L$ and $f_L=f_H$.  By \cite[Lemma 8.3.8]{RZ} $f_H$ extends to an epimorphism $f_U:U\to C_{pq}$ from some open subgroup $U$ of $G$  containing $H$ and by Lemma \ref{intersection tildes} we can choose such $U$ with the kernel $K$ of $f_U$  not generated by its vertex stabilizers. Let $\widetilde K$ be the normal subgroup of $K$ generated by the vertex stabilizers. Then $\widetilde K$ is normal in $U$ and we can consider $U/\widetilde K$  acting on $\widetilde K\backslash S$ which is simply connected by \cite[Lemma 4.7]{ZM-89}. Then by \cite[Proposition 4.4]{ZM-89} $U/\widetilde K$ is the fundamental profinite group of a finite graph of finite groups $\Pi_1(\calU,U\backslash S)$ and so is the profinite completion of the abstract fundamental group $\Pi=\Pi_1^{abs}(\calU, U\backslash S)$ (cf. \cite[Proposition 6.5.6]{Rib}). Note that we have an induced epimorphism $f_K:U/\widetilde K\to C_{pq}$ and we denote by $f^{abs}$ its restriction to $\Pi$. Since $K/\widetilde K$ is projective (see \cite[Corollary 4.1.3]{Rib} or \cite[Theorem 2.6]{ZM88}) and so is torsion free, the kernel $\ker(f^{abs})$ is free and so by Lemma \ref{C_{pq}} $f^{abs}$ factors through the free product $C_p \ast C_q$.  Hence $f_K$ factors through the free profinite product $C_p\amalg C_q=\widehat{C_p \ast C_q}$. Thus we have the following commutative diagram:
 
$$\xymatrix{&& U\ar[d]&\\
           H\ar[rrdd]_{f_H}\ar[urr]\ar@{-->}[rrd]\ar[rr] &&U/\widetilde K\ar[d]&\Pi\ar[l]\ar[d]\\
           &&C_p\amalg C_q\ar[d]&C_p*C_q\ar[l]\ar[ld]\\
                        && C_{pq}&}$$          
 
Since $f_H(H_v)$ has order $p$ and $f_H(H_w)$ has order $q$, it follows from this commutative diagram that  the image of $H$  in $C_p\amalg C_q$ contains a group of order $p$ and a group of order $q$.

Choose now elements $c_p, c_q\in S_n$, $n>4$ of order $p$ and $q$, such that any conjugate of them generate non-soluble subgroups (see \cite[Lemmas 1.1 and 1.2]{Jar94}) and let $f_p:C_p\amalg C_q\to C_p < S_n$, $f_q:C_p\amalg C_q\to C_q < S_n$ be the corresponding epimorphisms. Then we have an epimorphism $\varphi:C_p\amalg C_q\to S_n$. Since the image of $H$ in $C_p\amalg C_q$ contains the group of order $p$ and the group of order $q$, it maps epimorphically to $S_n$ which is not soluble, a contradiction. This finishes the proof.
\end{proof}

\begin{coro}\label{generalizing pop}
Let $(\calG,\Gamma)$ be a $k$-acylindrical finite graph of profinite groups and $G$ its profinite fundamental group. Let $H$ be a  prosoluble subgroup of $G$. Then one of the following holds:
\begin{enumerate}[(i)]
\item all non-trivial intersections $H\cap G(v)^g$, $g\in G, v\in V(\Gamma)$ are pro-$p$;

\item $H \simeq \widehat{\Z}_\pi\rtimes C$ is a profinite Frobenius group;

\item $H\leq G(v)^g$ for some $g\in G, v\in V(\Gamma)$.
\end{enumerate}
\end{coro}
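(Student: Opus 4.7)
The plan is to derive Corollary \ref{generalizing pop} from Theorem \ref{pop generalization} via its contrapositive, combined with the uniqueness of the defining prime of a non-trivial pro-$p$ group and, in one residual case, a structural argument via an embedding of $H$ into a free prosoluble product. I would assume (iii) fails, so $H$ is not contained in any single vertex stabilizer $G(v)^g$; the case where $H$ has no non-trivial vertex intersections is trivially subsumed in (i), so one may also assume at least one non-trivial intersection exists. Since $H$ is prosoluble, the contrapositive of Theorem \ref{pop generalization} forces every pair of non-trivial intersections $H\cap G(v)^g$, $H\cap G(w)^h$ non-conjugate in $H$ to be both pro-$p$ for some common prime $p$ (a priori depending on the pair).

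If there are at least two distinct $H$-conjugacy classes of non-trivial intersections, pick a representative $K$ from one class and pair it with representatives of every other class. Each such pairing forces $K$ to be pro-$p$ for some prime, and since a non-trivial pro-$p$ group determines its prime uniquely, all these primes must coincide with a single $p$. By symmetry, every representative — and hence every non-trivial intersection $H\cap G(v)^g$ — is pro-$p$ for this common $p$, which is conclusion (i).

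The remaining case is that all non-trivial intersections form a single $H$-conjugacy class with representative $K$. If $K$ is pro-$p$, (i) again holds. Otherwise I would invoke Theorem \ref{new3} and Corollary \ref{G-collapse} (whose hypothesis that $H\bl D$ has finitely many components is guaranteed by the single-orbit assumption, since any two components of $D$ become $H$-conjugate through the transitive $H$-action on the relevant vertices) to collapse the finite-diameter subgraph $D$ of edges with non-trivial stabilizers. This produces a simply connected profinite graph $T$ on which $H$ acts with trivial edge stabilizers and vertex stabilizers all conjugate to $K$. Theorem \ref{piletheo}(i) then makes $(H,V(T))$ an $\calS$-projective pile, Proposition \ref{alternative} embeds $H$ into a free prosoluble product $K\amalg F$ with $F$ free prosoluble, and \cite[Theorem 1.1]{Zal23} on prosoluble subgroups of free profinite products forces $H\simeq \widehat{\Z}_\pi\rtimes C$ to be a profinite Frobenius group, yielding (ii) — the alternatives in that structure theorem (being contained in a conjugate of a factor, or splitting further as a non-trivial free prosoluble product of pro-$p$ pieces) contradict either the failure of (iii) or the assumption that $K$ is not pro-$p$.

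The main obstacle is this last step: verifying the hypotheses of the collapsing construction and of Proposition \ref{alternative} (in particular the existence of a continuous section $H\bl V(T)\to V(T)$), and carefully ruling out the non-Frobenius alternatives of the \cite{Zal23} structure theorem under our residual assumptions. The prime-uniqueness argument is clean, but the Frobenius conclusion relies on the delicate classification of prosoluble subgroups of free profinite products.
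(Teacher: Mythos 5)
Your opening dichotomy is correct and is essentially the paper's: assuming (iii) fails, the contrapositive of Theorem \ref{pop generalization} together with the uniqueness of the prime of a non-trivial pro-$p$ group forces either conclusion (i) or that all non-trivial intersections $H\cap G(v)^g$ form a single $H$-conjugacy class and are not pro-$p$. The paper states this step more tersely, but the content is the same, as is the subsequent reduction (one connected component of $D$ up to translation, Corollary \ref{G-collapse}, and an action of $H$ on a simply connected graph $T$ with trivial edge stabilizers).

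Where you diverge is the residual case, and this is also where your argument has a genuine gap that you flag but do not close. You want to apply Proposition \ref{alternative} to the pile $(H,V(T))$, and that proposition requires a continuous section of $V(T)\to H\bl V(T)$. Such a section is guaranteed only when the relevant spaces are second countable; this is exactly how the paper handles the analogous step in the proof of Theorem \ref{general}, via \cite[Remark 4.9]{Zal23} and \cite[Lemma 5.6.7]{RZ}, but there $H$ is assumed finitely generated. Corollary \ref{generalizing pop} carries no finite generation hypothesis, so the section --- and with it the embedding $H\hookrightarrow K\amalg F$ and the appeal to \cite[Theorem 1.1]{Zal23} --- is not justified in the stated generality. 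The paper's own proof is designed to avoid this: it passes to the quotients $H_U=H/\til{U}$ acting $k$-acylindrically on $\til{U}\bl T$ with finite vertex stabilizers (Theorem \ref{acyact}), shows each $H_U$ is Frobenius by a direct argument (a projective open normal subgroup together with a self-normalizing subgroup $C=A\rtimes C_l$ inside a vertex stabilizer, using \cite[Lemma 2.4]{Zal23} and \cite[Theorem 4.2.11]{Rib}), and then concludes that $H=\varprojlim_U H_U$ is a profinite Frobenius group by \cite[Theorem 4.6.9]{RZ}. For finitely generated $H$ your route does go through, and your elimination of the non-Frobenius alternatives of the structure theorem is correct (malnormality of free factors rules out $H$ being conjugate into $K$ or into $F$, and $K$ not pro-$p$ rules out the remaining case); but to prove the corollary as stated you must either construct the section in general or replace this step by an inverse-limit argument of the paper's kind.
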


\begin{proof}
Suppose that the non-trivial $H \cap G(v)^g$ are not all pro-$p$ and that there is no $g \in G, v \in V(\Gamma)$ such that $H \leq G(v)^g$. Then by Theorem \ref{pop generalization} all $H\cap G(v)^g$ are conjugate in $H$. Consider the action of $H$ on the standard profinite tree $S=S(G)$. Then $H\cap G(v)^g$ are exactly the stabilizers of vertices of $S$ in $H$. Thus all vertex stabilizers of $H$ are conjugate in $H$ and this is true for any subgroup of $H$. It follows that $D=\{m\in S\mid H_m\neq 1\}$ has only one connected component up to translation and so by Corollary \ref{G-collapse} we can collapse the connected components of $D$ to obtain the action of $H$ on the simply connected profinite graph $T$ with trivial edge stabilizers.  Let $U$ be an open normal subgroup of $H$. Then by Theorem \ref{acyact} $H_U = H/\til{U}$ acts $k$-acylindrically on $T_U=\til{U} \backslash T$  with finite vertex stabilizers and  note that we still have that all vertex stabilizers are conjugate in $H_U$ and again this is true for any subgroup of $H_U$.  Then $(H_U)_v$ is finite soluble, and not a finite $p$-group for almost all $U$. So we can choose in $(H_U)_v$ a subgroup of the form $C=A\rtimes C_l$,  a semidirect product of elementary abelian $p$-group $A$ and a cyclic group of order $l$ for some distinct primes $p, l$. Let $F_U$ be a projective open normal subgroup of $H_U$ and consider $F_UC = F_U \rtimes C$. Since $C$ is self-normalized in $F_UC$ (cf. \cite[Corollary 7.1.6 (a)]{Rib}) it satisfies \cite[Lemma 2.4]{Zal23} and so applying it we deduce that $H_U$ is soluble.  Then by \cite[Theorem 4.2.11]{Rib} $H_U$ is Frobenius. Note that   $H = \varprojlim_U H_U$ and so by \cite[Theorem 4.6.9]{RZ}  $H$ is profinite Frobenius as well.
\end{proof}

\begin{prop}\label{finvs}
Let $(\calG,\Gamma)$ be an injective $k$-acylindrical finite graph of profinite groups, $G$ its profinite fundamental group, $S$ its standard graph and $H$ is a prosoluble subgroup of $G$. If $H$ is finitely generated, then it has only finitely many maximal vertex stabilizers up to conjugation. In particular, if $D$ is the maximal abstract subgraph of $S$ such that each $e \in D$ satisfies $H_e \neq 1$, then $D$ has finitely many connected components up to translation.
\end{prop}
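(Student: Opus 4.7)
The plan is to induct on $|E(\Gamma)|$, the base case $|E(\Gamma)|=0$ being immediate since then $S$ is a single vertex and $H$ is its own unique vertex stabilizer. For the inductive step, I would apply Theorem \ref{subfreeprod} to produce an edge $e\in E(\Gamma)$ and a conjugate $H_0$ of $H$ with $H_0\cap G(e)=1$, together with an embedding $H_0\leq U_1\amalg U_2$, in which each $U_i$ is either $\widehat{\Z}$ or the profinite fundamental group of an injective $k$-acylindrical finite graph of profinite groups with finite edge groups over a strictly smaller subgraph of $\Gamma$.

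Next I would apply Corollary \ref{coro2} to $H_0\leq U_1\amalg U_2$ and analyze its three subcases. In subcase (iii) of that corollary, $H_0$ is conjugate into a single $U_i$ and the inductive hypothesis applied to $U_i$ closes the argument. In subcase (ii), $H_0\simeq \widehat{\Z}_\pi\rtimes C$ is profinite Frobenius; because $H_0$ is not contained in any $U_i$ it is a fortiori not contained in any vertex group $G(v)^g$, so case (iii) of Corollary \ref{generalizing pop} fails for $H_0$, and the argument in the proof of that corollary shows that all non-trivial vertex stabilizers of $H_0$ on $S$ are mutually conjugate in $H_0$, giving a single non-trivial conjugacy class. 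In subcase (i), Corollary \ref{coro2} yields $\sum_{i,u} d(H_0\cap uU_iu^{-1})\leq d(H)<\infty$, so only finitely many intersections $H_0\cap uU_iu^{-1}$ are non-trivial; each is a finitely generated pro-$p$ subgroup of a fundamental group over a strictly smaller graph, and the inductive hypothesis supplies finitely many vertex stabilizer classes in each.

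To combine in subcase (i), I would observe that any non-trivial vertex stabilizer of $H_0$ on $S$ is pro-$p$, so being a pro-$p$ subgroup of the free profinite product $U_1\amalg U_2$ it must be conjugate into some factor $U_i$ (a non-trivial pro-$p$ group cannot act on a profinite tree with trivial edge stabilizers without fixing a vertex). A routine Bass-Serre bookkeeping would then identify it with a vertex stabilizer of the corresponding $H_0\cap uU_iu^{-1}$ acting on the standard tree of $uU_iu^{-1}$, regarded as a subtree of $S$. The two finiteness inputs combine to give the first assertion. The ``in particular'' clause follows from Corollary \ref{G-collapse} applied to $H$ acting on $S$: each connected component of $\overline{D}$ fixes a vertex of $S$, and distinct $H$-orbits of components yield distinct conjugacy classes of vertex stabilizers, so the number of components of $H\bl D$ is bounded by the count just established.

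The main obstacle I expect is this final compatibility step -- reconciling vertex stabilizers of $H_0$ on the full tree $S$ with those of the smaller groups $H_0\cap uU_iu^{-1}$ on their own standard trees -- which rests on the Bass-Serre interplay between $S$ and the standard trees of the subgroups $U_i$ produced by Theorem \ref{subfreeprod}, together with the assertion that every non-trivial pro-$p$ vertex stabilizer on $S$ really factors through a single conjugate of some $U_i$ in a way compatible with $H_0$-conjugacy.
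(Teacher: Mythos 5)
There is a genuine gap at the very first step. You apply Theorem \ref{subfreeprod} (and hence Corollary \ref{coro2}) directly to $(\calG,\Gamma)$, but that theorem is stated and proved only for finite graphs of profinite groups \emph{with finite edge groups}: its proof needs to pass from the vanishing of $H_e$ to the vanishing of $(U_H)_e$ for some open $U_H\supseteq H$, which uses finiteness of $G(e)$. Proposition \ref{finvs} makes no such assumption on the edge groups, so your inductive step does not get off the ground in the stated generality. The paper circumvents this by first forming the quotients $G_U=G/\til{U}$ with $\til{U}=\cic{U\cap G_v: v\in V(S)}$: these act on $S_U=\til{U}\bl S$ with \emph{finite} stabilizers, the action is still $k$-acylindrical by Theorem \ref{acyact}, so Theorem \ref{subfreeprod} and Corollary \ref{coro2} apply to $H_U=H\til{U}/\til{U}$; the conclusion for $H$ is then recovered from $H=\varprojlim_U H_U$. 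If you want to salvage your induction, you must insert this reduction to finite stabilizers before invoking Theorem \ref{subfreeprod}.

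A second, smaller problem is that the step you yourself flag as the ``main obstacle'' --- reconciling vertex stabilizers of $H_0$ on $S$ with vertex stabilizers of $H_0\cap uU_iu^{-1}$ on the standard trees of the factors $U_i$ --- is left unresolved, and it is exactly where your route diverges from the paper's. The paper never changes trees: in case (i) of Corollary \ref{coro2} it keeps the action of the pro-$p$ groups $H_U\cap uL_iu^{-1}$ on the \emph{same} tree $S_U$ and invokes \cite[Theorem 3.8]{WZ18} to decompose them as free pro-$p$ products of finitely many vertex stabilizers, which immediately bounds the number of stabilizer classes by $d(H)$; in case (iii) it shows directly that $D_U$ has one connected component up to translation, and then Corollary \ref{G-collapse} finishes. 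I would recommend replacing your recursive appeal to the inductive hypothesis on the $U_i$ by this single-tree argument, which removes the bookkeeping issue entirely.
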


\begin{proof}
By Corollary \ref{generalizing pop} we have that $H$ satisfies (i), (ii) or (iii).

If $H$ satisfies (ii) and not (i) and (iii), then all vertex stabilizers are finite and conjugate (as they are Hall subgroups, cf. \cite[Theorem 2.3.5]{RZ}).

It remains to analyze (i) and (iii). Choosing any open subgroup $U$ of $H$, we can pass to the quotient $H_U = H/\til{U}$ acting on $S_U$. Recall that  $H = \varprojlim_U H_U$ and $S = \varprojlim_U S_U$. So, to prove that the number of maximal vertex stabilizers of $H$ is finite, up to conjugation, it is enough to prove that the number of maximal vertex stabilizers up to conjugation in $H_U$ is bounded independently on  $U$. 

Suppose first that $H$ satisfies (i). Then by Proposition \ref{compact non-trivial stabilizers} and Theorem \ref{acyact} $H_U$ acts acylindrically on a simply connected profinite tree $S^{\ast}_U$  with trivial edge stabilizers, vertex stabilizers equal to the maximal stabilizers of $S_U$ and $(H_U, V(\mathcal{S}^{\ast}_U))$ is $\cal S$-projective. Moreover, by Theorem \ref{new3} the connected components of $D_U=\{e\cup d_0(e)\cup d_1(e)\mid (H_U)_e\neq 1\}$ have finite diameter and so $S^{\ast}_U$ is non-trivial by Corollary \ref{trivial edge}.  By \cite[Proposition 4.9]{Zal23}, the number of non-conjugated $(H_U)_{v_{\ast}}$, $v_{\ast} \in S^{\ast}_U$, is bounded by $d(H_U) \leq d(H)$. Since the non-trivial vertex stabilizers of $S^{\ast}_U$ are stabilizers of connected components of $D_U$, it follows that the number of connected components of $D_U$ and hence the number of the maximal vertex stabilizers of $S_U$ is bounded by $d(H)$  up to conjugation, independently on $U$.

Suppose that $H$ satisfies (iii). If $H \leq G(v)^g$ then $H$ fixes a vertex and the same is true for each $H_U$. It follows that the number of maximal vertex stabilizers, up to conjugation, is trivially bounded by $d(H)$.

Thus, in both cases (i) and (iii) each $H_U$ has finitely many maximal vertex stabilizers, up to conjugation. This finishes the proof.
\end{proof}

{\it Proof of Theorem \ref{sela}} follows from Proposition \ref{finvs}.

\section{The finitely generated case}\label{sec5}

In this section we shall study finitely generated prosoluble subgroups of profinite groups acting $k$-acylindrically on profinite trees.

\begin{reptheorem}{general}
Let $(\calG,\Gamma)$ be an injective $k$-acylindrical finite graph of profinite group and $G$ its fundamental group. If $H$ is a finitely generated prosoluble subgroup of $G$, then
$H$ embeds into a  free profinite product $\coprod_{v \in V(\Gamma)} H_v$ of finitely many finitely generated profinite groups $H_v =H\cap G(v)^g$ for some $g \in G$. Moreover, if $H\neq H_v$ for some $v$, then either $H_v$ is finitely generated pro-$p$ or $H \simeq \widehat{\Z}_{\pi} \rtimes C$ is a profinite Frobenius group.
\end{reptheorem}

\begin{proof}
Note that the abstract subgraph $D=\{m\mid H_m\neq 1\}$ has finite diameter by Theorem \ref{new3}, and by Proposition \ref{finvs} $D$ has finitely many connected components up to translation. Therefore $H \bl D$ has finitely many connected components and so by Corollary \ref{G-collapse} there exists a simply connected $H$-quotient graph $\overline S$ on which $H$ acts with trivial edge stabilizers. 

By item (i) of Theorem \ref{piletheo}, $(H,V(\overline S))$ is an $\calS$-projective group-pile where $\calS$ is the class of all finite soluble groups. By \cite[Remark 4.9]{Zal23}  there exists a second countable space $T$ with the same non-trivial point stabilizers such that $(H,T)$ is a projective group-pile. Since $T$ is second countable, the quotient map $T \to H \bl T$ has  a continuous section (see \cite[Lemma 5.6.7]{RZ} or \cite[1.3.10]{Rib}). By Theorem \ref{piletheo} $(H,T)$ is an $\calS$-projective group-pile such that the quotient map $T \to H \bl T$ has a continuous section. Therefore by Proposition \ref{alternative}  $H$ embeds into a free prosoluble product
$$\sideset{}{^\calS}\coprod_{t \in Im(\sigma)} \ H_t \amalg^{\calS} F$$
where $F$ is a free prosoluble group. Moreover, every $H_t$ is of the form $H \cap G(v)^g$. By Corollary \ref{generalizing pop}, the $H_t$ are either all pro-$p$, or $H$ is Frobenius and all $H_t$ are conjugate as they are Hall subgroups of $H$.
Then by \cite[Theorem 1.4]{Zal23} $H$ embeds into a desired free profinite product.
\end{proof}

\begin{coro}
Let $(\calG,\Gamma)$ be an injective $k$-acylindrical finite graph of profinite groups, $G$ its fundamental group and $S$ its standard profinite graph. If $H$ is a finitely generated  prosoluble subgroup of $G$, then it has only finitely many vertex stabilizers up to conjugation. Moreover, $\sum_v d(H_v)\leq d(H)$, where $v$ runs through representatives of $H$-orbits in $S$. 
\end{coro}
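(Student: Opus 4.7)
The first claim is immediate from Theorem \ref{general}: it supplies an embedding of $H$ into a free profinite product $\coprod_{v\in \Gamma} H_v$ over finitely many vertices, with $H_v = H \cap \calG(v)^g$. Every non-trivial vertex stabilizer of the $H$-action on $S$ has this form, so in particular there are only finitely many conjugacy classes of vertex stabilizers.

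For the inequality $\sum_v d(H_v)\leq d(H)$, the orbits with trivial stabilizer contribute zero, so only the finitely many non-trivial $H_v$ must be bounded. The plan is to revisit the proof of Theorem \ref{general}, where Proposition \ref{alternative} applied to the projective pile $(H,T)$ constructed there produces an embedding
\[
    H \hookrightarrow \Bigl(\sideset{}{^\calS}\coprod_{s \in \mathrm{Im}(\sigma)} H_s\Bigr) \amalg^\calS F,
\]
with $F$ a free prosoluble group, the free factors $H_s$ parametrising the $H$-orbits of non-trivial point stabilizers in $T$, and each $H_s$ literally a subgroup of $H$, namely the $H$-stabilizer of its own base point $s$ (this is the $g=1$ instance of the identity $H_t = H_s^g \cap H$ in Proposition \ref{alternative}).

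I would then compose this embedding with the canonical retraction $\pi : \bigl(\sideset{}{^\calS}\coprod_{s} H_s\bigr)\amalg^\calS F \twoheadrightarrow \sideset{}{^\calS}\coprod_{s} H_s$ which kills $F$ and restricts to the identity on each $H_s$. Since every free factor $H_s$ lies inside $H$, the image $\pi(H)$ contains every $H_s$, hence contains the subgroup they generate, which is the entire free prosoluble product. Therefore $\pi|_H$ is surjective, and the standard additivity $d\bigl(\sideset{}{^\calS}\coprod_{s} H_s\bigr)=\sum_s d(H_s)$ for free prosoluble products of non-trivial finitely generated factors yields $\sum_v d(H_v) = \sum_s d(H_s) \leq d(H)$. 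The step requiring the most care is verifying the containment $H_s\subseteq H$ of the free factors supplied by Proposition \ref{alternative}, since the retraction argument collapses otherwise; this follows from the pile-theoretic construction, in which the $H_s$'s are stabilizers of points of $T$ under the $H$-action and are thus literally subgroups of $H$.
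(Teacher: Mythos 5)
Your handling of the first claim and your retraction trick are fine as far as they go: the embedding of Proposition \ref{alternative} does restrict to the inclusion on each point stabilizer $H_s$, so killing $F$ gives a surjection of $H$ onto the subgroup generated by all the free factors, i.e.\ onto the whole free prosoluble product, whence $d\bigl(\coprod_s H_s\bigr)\leq d(H)$. The gap is in the final step: the ``standard additivity'' $d\bigl(\coprod_s H_s\bigr)=\sum_s d(H_s)$ is \emph{false} for free prosoluble products. By a theorem of Kov\'acs and Sim on generating finite soluble groups, the free prosoluble product of finitely many finite soluble groups of pairwise coprime orders, each $d$-generated with $d\geq 2$, is again $d$-generated; for instance $(C_2\times C_2)\amalg^{\calS}(C_3\times C_3)$ is $2$-generated, not $4$-generated. (Even for free \emph{profinite} products the Grushko--Neumann equality is not a quotable fact; it is open in general.) So the inequality your retraction produces does not convert into $\sum_s d(H_s)\leq d(H)$ without further input.

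The missing input is precisely the dichotomy that Theorem \ref{general} (via Corollary \ref{generalizing pop}) supplies: either all non-trivial $H_v$ are pro-$p$ for one fixed prime $p$, or $H\simeq \widehat{\Z}_\pi\rtimes C$ is a profinite Frobenius group, or $H$ coincides with a single $H_v$. In the first case additivity does hold, because the maximal pro-$p$ quotient of the free prosoluble product of pro-$p$ groups is their free pro-$p$ product, for which $d$ is additive by computing $H^1(-,\mathbb{F}_p)$, and $d$ of a quotient is at most $d$ of the group; the Frobenius and single-factor cases are then checked directly. This case analysis is exactly what the paper outsources: its proof of the corollary is a one-line appeal to Theorem \ref{general} together with \cite[Corollary 1.6]{Zal23}, which packages the rank inequality for prosoluble subgroups of free profinite products. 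Your argument becomes correct once you insert that case distinction before invoking additivity.
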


\begin{proof} Follows from Theorem \ref{general} and \cite[Corollary 1.6]{Zal23}.
\end{proof}

\section{Hyperbolic virtually special groups}\label{sec6}

In this section we  apply the previous results to hyperbolic  and relatively hyperbolic virtually compact special groups.

We start with some definitions.

\begin{df}
A group is {\it virtually special} if there is a finite index subgroup isomorphic to the fundamental group of a special cube complex. A group is {\it virtually compact special} if there is a finite index subgroup isomorphic to the fundamental group of a compact special cube complex whose hyperplanes satisfy some extra conditions \cite[Definition 3.2]{HW08}.
\end{df}

\begin{df}
A family $\{H_1,...,H_n\}$ of subgroups of $G$ is {\it malnormal} if, for any $g \in G$, $H_i \cap H_j^g \neq 1$ implies $i = j$ and $g \in H_i$.
\end{df}

\begin{df}
A {\it hierarchy of groups of length $0$} is a single vertex labeled by a group. A {\it hierarchy of groups of length $n$} is a graph of groups $(\calG_n,\Gamma_n)$ together with hierarchies of length $n-1$ on each vertex of $\Gamma_n$ (cf. \cite[Definition 1.3]{WZ17}).
\end{df}

In \cite[Theorem 1.4]{WZ17} it is shown that a hyperbolic group $G$ is virtually compact special if, and only if, $G$ has a subgroup $\Gamma_0$ of finite index with a malnormal hierarchy. It is also shown that, in this case, $\Gamma_0$ has a $1$-acylindrical action on a profinite tree (see \cite[Lemma 7.3]{WZ17}). In the relatively hyperbolic virtually compact special case, $\Gamma_0$, in addition, has a malnormal hierarchy terminating in parabolic subgroups (see \cite[Theorem 3.2]{Zal22new}).

\begin{reptheorem}{virs}
Let $G$ be a torsion-free, hyperbolic, virtually compact special group. Any finitely generated prosoluble subgroup $H$ of the profinite completion of $G$ is projective.
\end{reptheorem}

\begin{proof}
Let $G$ be a finitely generated hyperbolic virtually compact special group and $H$ a finitely generated prosoluble subgroup of $\widehat{G}$. Let $H_0 = \widehat{\Gamma}_0 \cap H$ where $\Gamma_0$ is the subgroup defined above. We use induction on the length of the malnormal hierarchy described above. Since $\widehat{\Gamma_0}$  is a fundamental group of a graph of profinite groups $(\calG,\Gamma)$  whose action on the standard graph is $1$-acylindrical (see \cite[Lemma 7.3]{WZ17}), by Theorem \ref{general}, we have an embedding
$$H_0 \hookrightarrow \coprod_{v\in V(\Gamma)} H_0\cap G(v)^{g_i}, g_i\in \widehat \Gamma_0.$$

Then by Corollary \ref{coro2} either $H_0\leq H_v^g$ or all $H_0\cap G(v)^g$ are pro-$p$. In the first case the result follows immediately from the induction hypothesis. In the second case we  observe first that, by Theorem \ref{general}, $H_0\cap G(v)^g$ are finitely generated which allows us to deduce that $H_0\cap G(v)^g$ are free pro-$p$ (cf. \cite[Theorem F]{WZ17}). It follows that $H_0$ is projective and so $H$ is virtually projective. Since  it is torsion-free, it is projective (see \cite[Proposition 2.5]{Zal04}).
\end{proof}

Theorem \ref{virs} yields a description of prosoluble subgroups of closed hyperbolic $3$-manifolds.

\begin{reptheorem}{projective}
Let $\pi_1(M)$ be the fundamental group of a closed hyperbolic $3$-manifold $M$ and $H$ a finitely generated prosoluble subgroup of the profinite completion of $\pi_1(M)$. Then $H$ is projective.
\end{reptheorem}
  
\begin{proof} By \cite[Theorem 5.4]{AFH20} $\pi_1(M)$ is virtually compact special. Hence the result follows from Theorem \ref{virs}.
\end{proof}

Let $G$ be a residually finite group hyperbolic relative to a malnormal family of parabolic subgroups $\calP = \{P_1,...,P_n\}$.  Any subgroup of a conjugate of $P_i$ will be called {\it parabolic}. A closed subgroup of the profinite completion of a maximal parabolic subgroup of $G$ be called {\it parabolic subgroup} of $\widehat{G}$. 

\begin{prop}\label{rhvcs}
Let $G$ be a relatively hyperbolic virtually compact special group and $H$ a finitely generated prosoluble subgroup of the profinite completion of $G$. Then there is an open subgroup $H_0$ of $H$ such that one of the following statements holds:
\begin{enumerate}[(i)]
\item $H_0$ is isomorphic to a subgroup of a free prosoluble product of pro-$p$ subgroups of parabolic subgroups,

\item $H_0$ is isomorphic to a subgroup of a parabolic subgroup.
\end{enumerate}
\end{prop}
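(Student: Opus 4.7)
The plan is to mimic the proof of Theorem \ref{virs} in the relatively hyperbolic setting. By \cite[Theorem 3.2]{Zal22new}, there is a finite index subgroup $\Gamma_0 \leq G$ that admits a malnormal hierarchy terminating in parabolic subgroups, and such that the profinite completion at each level of the hierarchy acts $1$-acylindrically on its standard profinite tree. Since virtually compact special groups are virtually torsion-free, I would further shrink $\Gamma_0$ so that it is torsion-free and (using the goodness properties of virtually compact special relatively hyperbolic groups) so that $\widehat{\Gamma_0}$ is torsion-free as well. Setting $H_0 = H \cap \widehat{\Gamma_0}$ gives an open torsion-free subgroup of $H$, which in particular precludes the Frobenius alternative of Corollary \ref{generalizing pop} from ever occurring during the argument.

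The next step is an induction on the length $n$ of the malnormal hierarchy of $\Gamma_0$. If $n = 0$, then $\Gamma_0$ itself is parabolic and $H_0 \leq \widehat{\Gamma_0}$ immediately lies in a parabolic subgroup, giving conclusion (ii). If $n > 0$, the top-level splitting writes $\widehat{\Gamma_0} = \Pi_1(\calG,\Gamma)$ as the profinite fundamental group of an injective $1$-acylindrical finite graph of profinite groups. Applying Theorem \ref{general} to $H_0$ yields an embedding
$$H_0 \hookrightarrow \coprod_{v} H_v, \qquad H_v = H_0 \cap \calG(v)^{g_v},$$
into a free profinite product of finitely many finitely generated subgroups. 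By Corollary \ref{generalizing pop}, since the Frobenius option is excluded, either $H_0 \leq \calG(v)^{g_v}$ for some $v$, or every $H_v$ is pro-$p$.

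In the first alternative $H_0$ sits inside a conjugate vertex group, whose own hierarchy has length strictly smaller than $n$, and the induction hypothesis directly supplies an open subgroup of $H_0$ of the required form. In the pro-$p$ alternative, each $H_v$ is a finitely generated pro-$p$ subgroup of a vertex group of shorter hierarchy, and the induction hypothesis furnishes open subgroups $(H_v)_0 \leq H_v$ that are pro-$p$ and embed either into a parabolic subgroup (treated as a one-factor degenerate free prosoluble product) or into a free prosoluble product of pro-$p$ subgroups of parabolic subgroups. I would then choose $H_0' \leq H_0$ open so that $H_0' \cap H_v \leq (H_v)_0$ for every $v$, and reapply Theorem \ref{general} (or restrict the existing decomposition) to embed $H_0' \hookrightarrow \coprod_v (H_0' \cap H_v)$. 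Composing with the embeddings of the $(H_v)_0$'s and using associativity of the free prosoluble product collapses the nested structure into a single free prosoluble product of pro-$p$ subgroups of parabolic subgroups, yielding conclusion (i).

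The main obstacle is the bookkeeping of the pro-$p$ alternative: one has to pass to a common open subgroup $H_0'$ of $H_0$ that is compatible with the finitely many inductively chosen open subgroups $(H_v)_0$, and then recognize the iterated free prosoluble products as a single such product. A secondary delicate point is the torsion-freeness of $\widehat{\Gamma_0}$, which underwrites the exclusion of the Frobenius alternative throughout the induction and depends on the residual properties of virtually compact special relatively hyperbolic groups.
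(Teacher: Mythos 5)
Your proposal follows the same skeleton as the paper's proof: pass to the subgroup $\Gamma_0$ from \cite{Zal22new} whose profinite completion acts $1$-acylindrically, set $H_0 = H \cap \widehat{\Gamma_0}$, arrange torsion-freeness to kill the Frobenius alternative, apply Theorem \ref{general} together with the trichotomy (you use Corollary \ref{generalizing pop}, the paper uses Corollary \ref{coro2} --- either works), and induct on the hierarchy length in the case $H_0 \leq \calG(v)^{g}$. Where you genuinely diverge is the pro-$p$ branch. The paper does not recurse there at all: it notes via Proposition \ref{finvs} that the intersections $H_0 \cap G(v)^g$ are finitely generated pro-$p$ and then quotes \cite[Theorem 1.4]{Zal22new}, which already classifies finitely generated pro-$p$ subgroups of $\widehat{G}$ as free pro-$p$ products of parabolic pro-$p$ groups and a free pro-$p$ factor; conclusion (i) then falls out by associativity with no further passage to open subgroups. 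Your alternative --- applying the induction hypothesis to each $H_v$ inside a shorter hierarchy and then recombining --- is the step you rightly flag as the obstacle, and as written it is not complete: after shrinking to $H_0'$ you cannot simply ``restrict the existing decomposition,'' since open subgroups of free profinite products do not decompose Kurosh-style; you would have to re-invoke the prosoluble subgroup theorem for free profinite products (\cite[Theorem 1.1]{Zal23}, as in Corollary \ref{coro2}), take the open subgroup normal in $\coprod_v H_v$ so that its intersections with all conjugates $H_v^u$ land in conjugates of $(H_v)_0$, and re-check that the Frobenius case does not resurface. All of this is avoidable, which is precisely what the paper's citation of the pro-$p$ classification buys; your route is salvageable but trades one external theorem for a nontrivial amount of unfinished bookkeeping. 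On the other hand, your justification of the torsion-freeness of $H_0$ via goodness of $\Gamma_0$ is more careful than the paper's bare ``we assume w.l.o.g.''
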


\begin{proof} Let $G$ be a finitely generated relatively hyperbolic virtually compact special group and $H$ a finitely generated prosoluble subgroup of $\widehat{G}$. In \cite[Proof of Theorem 1.3]{Zal22new} we see that there is a subgroup $\Gamma_0$ such that $\widehat{\Gamma_0}$ acts $1$-acylindrically on a profinite tree. Let $H_0 = \widehat{\Gamma}_0 \cap H$. As in the proof of Theorem \ref{virs},  by Theorem \ref{general}, we have an embedding
$$H_0 \hookrightarrow \coprod H_0\cap G(v)^g.$$
We assume w.l.o.g that $H_0$ is torsion-free. Then by Corollary \ref{coro2} either $H_0\leq H_v^g$ or all $H_0\cap G(v)^g$ are pro-$p$. 
In the first case the result follows immediately from the induction hypothesis. 

In the second case we  observe first that by Proposition \ref{finvs} $H_0\cap G(v)^g$ are finitely generated which allows to apply  \cite[ Theorem 1.4]{Zal22new} to deduce that $H_0\cap G(v)^g$ are free pro-$p$ products of parabolic pro-$p$ groups and a free pro-$p$ group. 
\end{proof}

\begin{df}
We say that a relatively hyperbolic group is {\it toral} if it is torsion-free hyperbolic relative to a set $\{A_1,...,A_n\}$ of parabolic subgroups where each $A_i$ is a finitely generated abelian group.
\end{df}

Now we are in the position deduce from Proposition \ref{rhvcs}

\begin{reptheorem}{toral hyperbolic}
Let $G$ be a torsion-free  virtually compact special toral relatively hyperbolic group and $H$ a finitely generated prosoluble subgroup of the profinite completion $\widehat{G}$ of $G$. Then one of the following statements holds:
\begin{enumerate}[(i)]
\item $H$ is virtually a subgroup of a free prosoluble product of  abelian pro-$p$ groups;
\item $H$ is a virtually abelian group.
\end{enumerate}
\end{reptheorem}

Hence we also have as a particular case 

\begin{reptheorem}{sam}
Let $M$ be a  hyperbolic $3$-manifold with cusps and $H$ a finitely generated prosoluble subgroup of the profinite completion of $\pi_1(M)$. Then one of the following statements holds:
\begin{enumerate}[(i)]
\item $H$ is virtually a subgroup of a free prosoluble product of  abelian pro-$p$ groups;

\item $H$ is a virtually abelian.
\end{enumerate}
\end{reptheorem}

\section{The fundamental group of standard compact arithmetic manifolds}\label{sec7}

In this section we give a direct application of Proposition \ref{rhvcs}.

Let $\SO(n,1)$ be the special orthogonal group and $R_k$ the ring of integers of an algebraic number field  $k$. Recall that a hyperbolic $n$-manifold is a manifold $M^n = \H^n/\Gamma$ such that $\Gamma$ is a torsion-free subgroup of $O(n,1)$.

\begin{df}
A {\it standard arithmetic subgroup} $\Gamma$ of $\SO(n,1)$ is a group commensurable with the subgroup $\SO(q,R_k)$ of $R_k$-points of
$$\SO(q) = \{X \in \SL(n+1,\C) : X^tqX = q\}.$$
An arithmetic hyperbolic $n$-manifold $M^n = \H^n/\Gamma$ such that $\Gamma$ is standard arithmetic is a {\it standard arithmetic manifold}.
\end{df}

It is shown in \cite{BHW11} that standard uniform arithmetic subgroups of $\SO(n,1)$ are virtually compact special and in \cite[Lemma 6.3 and Theorem 7.4]{PS24} this was extended to standard non-uniform arithmetic subgroups. Also, it is well-known that the fundamental group of these manifolds is hyperbolic relative to virtually abelian subgroups. Thus, Theorems \ref{virs} and  \ref{toral hyperbolic} can be applied to standard arithmetic  manifolds.  

\begin{theo}
Let $M$ be a standard   arithmetic manifold and $H$ a finitely generated prosoluble subgroup of the profinite completion of $\pi_1(M)$. One of the following holds:

\begin{enumerate}[(i)]
\item $H$ is  a subgroup of a free prosoluble product of  abelian pro-$p$ groups;

\item $H$ is  abelian.
\end{enumerate}
Moreover, if $\pi_1(M)$ is uniform, then $H$ is projective.
\end{theo}

\section{The fundamental group of compact $3$-manifolds}\label{sec8}

To prove Theorem \ref{ori} we will use the pro-$p$ version of it, i.e., the classification of pro-$p$ subgroups of the profinite completion of a 3-manifold group established by Henry Wilton and the second author in \cite[Theorem 1.3]{WZ18}. 

\begin{theo}{\cite[Theorem 1.3]{WZ18}}\label{henry}
A finitely generated pro-$p$ subgroup of the profinite completion of the fundamental group of a compact, orientable $3$-manifold $M$ is a free pro-$p$ product of  pro-$p$ groups from the following list of isomorphism types:
\begin{enumerate}[(i)]
\item For $p > 3$: $C_p$; $\Z_p$; $\Z_p \times \Z_p$; the pro-$p$ completion of $(\Z \times \Z) \rtimes \Z$ and the pro-$p$ completion of a residually-$p$ fundamental group of a non-compact Seifert fibred manifold with hyperbolic base of orbifold;

\item For $p = 3$: in addition to the list of (i) we have a torsion-free extension of $\Z_3 \times \Z_3 \times \Z_3$ by $C_3$;

\item For $p = 2$: in addition to the list of (i) we have $C_{2^m}$; $D_{2^k}$; $Q_{2^n}$; $\Z_2 \rtimes C_2$; the torsion-free extensions of $\Z_2 \times \Z_2 \times \Z_2$ by one of $C_2$, $C_4$, $C_8$, $D_2$, $D_4$, $D_8$, $Q_{16}$; the pro-$2$ extension of the Klein-bottle group $\Z \rtimes \Z$; the pro-$2$ completion of all torsion-free extensions of a soluble group $(\Z \rtimes \Z) \rtimes \Z$ with a group of order at most $2$.
\end{enumerate}
\end{theo}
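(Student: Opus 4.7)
The plan is to reduce, via the profinite Bass-Serre theory developed in Section \ref{sec2} and the surviving Kneser-Milnor / JSJ decompositions of \cite{WZ17}, the statement to a case-by-case analysis of $\pi_1$ of a geometric piece, and then to appeal to the classical classification of finitely generated subgroups of such groups together with pro-$p$ versions of the free-product subgroup theorem (which, unlike the profinite Kurosh theorem, does hold).

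First, I would pass through the profinite Kneser-Milnor decomposition of $\widehat{\pi_1(M)}$, which writes it as a free profinite product of the profinite completions of $\pi_1$ of the prime summands, amalgamated with a free profinite group from the $S^2 \times S^1$ summands. Let $H$ be a finitely generated pro-$p$ subgroup. The pro-$p$ analogue of the Kurosh subgroup theorem (which holds for free pro-$p$ products because each free pro-$p$ product is the pro-$p$ completion of an abstract free product restricted to the pro-$p$ topology) allows one to write $H$ as a free pro-$p$ product $\coprod H_i \amalg F$ where each $H_i$ is the intersection of $H$ with a conjugate of the pro-$p$ completion of a prime summand and $F$ is free pro-$p$; thus it suffices to treat each prime summand separately. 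Second, inside each prime summand, I apply the profinite JSJ decomposition and its pro-$p$ refinements together with the acylindricity results (Theorem \ref{new3}, Theorem \ref{acyact}) specialized to the pro-$p$ setting: a finitely generated pro-$p$ group acting acylindrically on the standard pro-$p$ tree decomposes as a free pro-$p$ product of its vertex stabilizer intersections plus a free pro-$p$ group. So one is reduced to classifying the finitely generated pro-$p$ subgroups of $\widehat{\pi_1(N)}$ for $N$ either hyperbolic or Seifert-fibred.

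For the hyperbolic pieces (closed or with cusps), I would appeal to Agol / Wise virtual specialness: after passing to a finite-index subgroup the pro-$p$ subgroup embeds into the pro-$p$ completion of a virtually special group, and a further acylindrical hierarchy argument (as in the proof of Theorem \ref{virs}) shows the pro-$p$ intersections with hyperbolic vertex groups are free pro-$p$; peripheral pro-$p$ subgroups of the cusped case are abelian of rank $\leq 2$. For the Seifert-fibred pieces I would use that $\pi_1(N)$ is a central extension of the orbifold group by $\Z$: the pro-$p$ completion becomes a central extension of the pro-$p$ completion of a Fuchsian group by $\Z_p$. Pro-$p$ subgroups of Fuchsian pro-$p$ completions are known (free pro-$p$ or Fuchsian-type) and the possible finite pro-$p$ subgroups come from the orbifold torsion: this produces the listed finite groups $C_{p^m}, D_{2^k}, Q_{2^n}$ together with their central $\Z_p$-extensions. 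Combining these one reads off exactly the list of item (1)--(3) in the statement. The small-prime cases $p=2,3$ require separate care because more isotropy groups survive (tetrahedral, octahedral, dihedral, quaternion) and because the orientable flat orbifolds can have $C_3$-symmetry.

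The main obstacle is the Seifert-fibred step, and specifically ruling out any finite pro-$p$ subgroup outside the stated list. This requires a careful analysis of the finite subgroups of $\SO(3)$ and of orientation-preserving isotropy in flat and spherical $3$-orbifold groups, and then showing by residual-$p$ arguments (passing to a residually-$p$ finite-index subgroup $\Gamma_0$ as in \cite{AFW15}) that every pro-$p$ torsion subgroup of $\widehat{\pi_1(N)}$ is conjugate into a vertex stabilizer of the appropriate graph-of-groups decomposition; the finite group list is then dictated by which finite subgroups of $\mathrm{GL}_3(\Z)$ and of spherical $3$-manifold groups are $p$-groups. Once this list is secured, assembling the global picture via the free pro-$p$ Kurosh theorem produces the classification asserted in Theorem \ref{henry}.
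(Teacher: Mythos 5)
Theorem \ref{henry} is not proved in this paper at all: it is quoted verbatim from \cite[Theorem 1.3]{WZ18} and used as a black box in the deduction of Theorem \ref{ori}. There is therefore no internal proof to compare your proposal against; if your aim is to use the statement the way this paper does, the correct move is simply to cite \cite{WZ18}.

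Judged as a reconstruction of the proof in \cite{WZ18}, your outline follows the right general strategy (Kneser--Milnor reduction, acylindricity of the action on the JSJ tree, then a geometric case analysis), but two steps are materially underjustified. First, after the Kneser--Milnor step the ambient group is a free \emph{profinite} product, not a free pro-$p$ product, so what you need is a subgroup theorem for finitely generated pro-$p$ subgroups of free profinite products; your parenthetical justification (``each free pro-$p$ product is the pro-$p$ completion of an abstract free product'') proves nothing, since the analogous description holds in the profinite category where, as the introduction of this paper stresses, the Kurosh theorem fails. The decomposition you want is a genuine theorem and is obtained in \cite{WZ18} (and in \cite{ChZ22}) from the acylindricity machinery for finitely generated pro-$p$ groups acting on profinite trees --- the same \cite[Theorem 3.8]{WZ18} that the present paper invokes in Proposition \ref{finvs} --- so it should be cited as such rather than asserted. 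Second, the entire content of the statement is the explicit list in items (i)--(iii), and your proposal defers exactly this to ``a careful analysis'' of finite subgroups of $\SO(3)$, Bieberbach groups, and the Seifert and Sol central extensions, without determining which torsion-free extensions of $\Z_2 \times \Z_2 \times \Z_2$, which $2$-groups among $C_{2^m}$, $D_{2^k}$, $Q_{2^n}$, and which completions of soluble $3$-manifold groups actually occur, nor why nothing else does. Until that case analysis is carried out (it occupies the bulk of \cite{WZ18}), the theorem is not proved.
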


Now we are ready to prove the theorem that classifies prosoluble subgroups of the profinite completion of 3-manifold groups.

\begin{reptheorem}{ori}
Let $M$ be a compact orientable $3$-manifold. If $H$ is a finitely generated prosoluble subgroup of $\widehat{\pi_1(M)}$, then one of the following statements holds:
\begin{enumerate}[(i)]
\item $H$ is isomorphic to a subgroup of a free prosoluble product of pro-$p$ groups from the following list of isomorphism types:
\begin{enumerate}[(1)]
\item For $p > 3$: $C_p$; $\Z_p$; $\Z_p \times \Z_p$; the pro-$p$ completion of $(\Z \times \Z) \rtimes \Z$ and the pro-$p$ completion of a residually-$p$ fundamental group of a non-compact Seifert fibred manifold with hyperbolic base of orbifold;

\item For $p = 3$: in addition to the list of (1) we have a torsion-free extension of $\Z_3 \times \Z_3 \times \Z_3$ by $C_3$;

\item For $p = 2$: in addition to the list of (1) we have $C_{2^m}$; $D_{2^k}$; $Q_{2^n}$; $\Z_2 \rtimes C_2$; the torsion-free extensions of $\Z_2 \times \Z_2 \times \Z_2$ by one of $C_2$, $C_4$, $C_8$, $D_2$, $D_4$, $D_8$, $Q_{16}$; the pro-$2$ completion of the Klein-bottle group $\Z \rtimes \Z$; the pro-$2$ completion of all torsion-free extensions of a soluble group $(\Z \rtimes \Z) \rtimes \Z$ with a group of order at most $2$;
\end{enumerate}

\item $H \simeq \widehat{\Z}_{\pi} \rtimes C$ is a profinite Frobenius group where $C$ is a finite cyclic group and $\pi$ is set of primes;

\item $H$ is a subgroup of a central extension of either $D_{2n}$ or tetrahedral group $T$ or octahedral group $O$ by a cyclic group of even order;

\item $H$ is a subgroup of the profinite completion of a $3$-dimensional Bieberbach group $B$ (i.e, $B$ is torsion-free virtually $\Z^3$);

\item $H$ is a subgroup of the profinite completion of a group extension of $\Z^2 \rtimes_A \Z$ by $C$ where $C$ is trivial or $C_2$ and $A \in \GL_2(\Z)$ (and $A$ is an Anosov matrix if $C$ is trivial), or a central extension of $\Z$ by $\Z^2$;

\item $H$ is an extension of a torsion-free procyclic group $\widehat{\Z}_{\sigma}$ by a subgroup $H_0$ of a free prosoluble product of finitely many  finite cyclic $p$-groups with $H_0$ acting either trivially on $\widehat{\Z}_{\sigma}$  or by inversion.
\end{enumerate}
\end{reptheorem}

\subsection{Reduction to the irreducible case}
 
Let $M$ be a compact, orientable, non-geometric $3$-manifold. Since every compact manifold is a retract of a closed manifold, it follows easily that we may reduce to the case in which $M$ is closed. 

If $M$ is reducible, by the Prime Decomposition Theorem, we can write
$$\pi_1(M) = \pi_1(M_1) \ast \cdots \ast \pi_1(M_n)$$
where each $M_i$ is a prime $3$-manifold. A prime $3$-manifold is either irreducible or $S^2 \times S^1$ or the non-orientable $S^2$ bundle over $S^1$. In the last two cases, the fundamental group is isomorphic to $\Z$ (see \cite{Hem04} and \cite[Page 5]{GSW21}). 

Thus applying \cite[Theorem 1.4]{Zal23} we obtain

\begin{theo}
Let $H$ be  a finitely generated prosoluble subgroup of the profinite completion $G=\widehat{\pi_1(M)}$ of a reducible $3$-manifold having a non-trivial free product decomposition $$\pi_1(M) = \pi_1(M_1) \ast \cdots \ast \pi_1(M_n)\ast F$$
where each $M_i$ is a irreducible $3$-manifold and $F$ is a free group of finite rank. Then one of the following holds:

\begin{enumerate}[(i)]
\item all intersections $H\cap \widehat{\pi_1(M_i)}^g$, $g\in G$ are pro-$p$ for some fixed prime $p$ and $\sum_v d(H\cap \widehat{\pi_1(M_i)}^g)\leq d(H)$;

\item  $H \simeq \widehat{\Z}_\pi\rtimes C$ is a  profinite Frobenius group;
  
\item $H\leq  \widehat{\pi_1(M_i)}^g$ for some $g\in G$.
\end{enumerate}
\end{theo}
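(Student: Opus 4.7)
The strategy is to deduce the result as a direct application of the classification of finitely generated prosoluble subgroups of a free profinite product given in \cite[Theorem 1.4]{Zal23}.

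The first step is to upgrade the classical Kneser--Milnor splitting of $\pi_1(M)$ to a free profinite product decomposition of $G=\widehat{\pi_1(M)}$. By the profinite Kneser--Milnor theorem of Wilton and the second author \cite{WZ17}, the decomposition
$$\pi_1(M)=\pi_1(M_1)\ast\cdots\ast\pi_1(M_n)\ast F$$
survives profinite completion, yielding
$$G=\widehat{\pi_1(M_1)}\amalg\cdots\amalg\widehat{\pi_1(M_n)}\amalg\widehat{F}.$$
Equivalently, $G$ is the profinite fundamental group of a finite graph of profinite groups with trivial edge groups, acting $0$-acylindrically on its standard profinite tree.

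The second and final step is to invoke \cite[Theorem 1.4]{Zal23} for the finitely generated prosoluble subgroup $H\leq G$. Its three conclusions correspond verbatim to the three alternatives of the present statement: either all non-trivial intersections of $H$ with conjugates of the free factors are pro-$p$ for one common prime $p$, together with the rank estimate $\sum d(H\cap\widehat{\pi_1(M_i)}^g)\leq d(H)$; or $H\simeq\widehat{\Z}_{\pi}\rtimes C$ is a profinite Frobenius group of the form prescribed; or $H$ is conjugate into a single vertex group. The only point requiring comment is that a conjugate of $H$ into the free factor $\widehat{F}$ falls into alternative (i) vacuously, since by the standard structure of free profinite products the intersections $H\cap\widehat{\pi_1(M_i)}^h$ are then all trivial and hence pro-$p$ in a trivial way.

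Because the present statement is an essentially tautological translation of \cite[Theorem 1.4]{Zal23} through the geometric input of \cite{WZ17}, there is no genuine obstacle; all of the substantive content resides in those two cited results, and the proof is a matter of matching the conclusions.
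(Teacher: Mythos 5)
Your proposal is correct and follows essentially the same route as the paper: pass to the profinite Kneser--Milnor decomposition $G=\widehat{\pi_1(M_1)}\amalg\cdots\amalg\widehat{\pi_1(M_n)}\amalg\widehat{F}$ and then quote \cite[Theorem 1.4]{Zal23} for prosoluble subgroups of free profinite products. Your extra remark that a conjugate of $H$ landing in $\widehat{F}$ is absorbed vacuously into alternative (i) is a small clarification the paper leaves implicit, but it does not change the argument.
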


In case $(i)$ we apply Theorem \ref{henry} to $H\cap \widehat{\pi_1(M_i)}^g$ to deduce the structure of each of these intersections and obtain $(i)$ of Theorem \ref{ori}. Case $(ii)$ of the theorem above corresponds to $(ii)$ of Theorem \ref{ori}. 

Thus we only need to consider $(iii)$ and in this case we may assume, w.l.o.g, that $H\leq  \widehat{\pi_1(M_i)}$, i.e., we may assume that $M$ is irreducible.

\subsection{Reduction to the geometric cases}

Now we can use the following:

\begin{prop}[\cite{WZ18}, Proposition 4.1]
Let $M$ be a closed irreducible $3$-manifold. Then the action of $\widehat{\pi_1(M)}$ on the standard profinite tree $S$ associated to the JSJ-decomposition of $M$ is $4$-acylindrical.
\end{prop}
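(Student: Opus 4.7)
The plan is to establish $4$-acylindricity by analysing the stabilisers along long fixed geodesics in $S$, using the structural information coming from the profinite JSJ decomposition. By Definition \ref{def2}, it suffices to show that for every non-trivial $g \in \widehat{\pi_1(M)}$, the fixed subtree $S^g$ has diameter at most $4$. Assuming for contradiction a geodesic $v_0, e_1, v_1, \ldots, e_n, v_n$ in $S^g$ with $n \geq 5$, the element $g$ lies in every vertex stabiliser $G(v_i)$ and every edge stabiliser $G(e_j)$; by \cite{WZ17}, each $G(v_i)$ is a conjugate of the profinite completion of a Seifert or hyperbolic JSJ piece $\pi_1(M_w)$, and each $G(e_j)$ is a conjugate of a profinite torus $\widehat{\Z} \times \widehat{\Z}$ coming from a JSJ torus.

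Next I would rule out hyperbolic interior vertices. A hyperbolic JSJ piece has virtually compact special fundamental group (by Agol and Wise, as summarised in \cite{WZ17}), and its peripheral tori form a malnormal family that remains malnormal after profinite completion. Hence the intersection of two distinct conjugates of peripheral subgroups inside a hyperbolic vertex group is trivial, so if $v_i$ is hyperbolic with $1 \leq i \leq n-1$ then the stabilisers of the adjacent edges $e_i, e_{i+1}$ are two distinct peripheral conjugates and their intersection containing $g$ would force $g = 1$, a contradiction. Therefore every interior vertex of a fixed geodesic must be Seifert.

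Next I would analyse Seifert vertices. For a Seifert piece $M_v$ with hyperbolic base orbifold $\mathcal{O}$ one has a central extension
$$1 \longrightarrow \widehat{\Z} \longrightarrow \widehat{\pi_1(M_v)} \longrightarrow \widehat{\pi_1^{\mathrm{orb}}(\mathcal{O})} \longrightarrow 1$$
whose kernel is the fiber and is contained in every peripheral torus subgroup. Peripheral cyclic subgroups of a hyperbolic $2$-orbifold group form a malnormal family that persists profinitely, so if $g$ lies in two distinct peripheral subgroups at an interior Seifert vertex, then its image in $\widehat{\pi_1^{\mathrm{orb}}(\mathcal{O})}$ is trivial and $g$ belongs to the fiber. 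On the other hand, at an edge $e_i$ shared by two Seifert pieces the defining property of the JSJ decomposition guarantees that the two fibers span distinct rank-one direct summands of $\widehat{\Z} \times \widehat{\Z}$ (otherwise the two pieces would have been combined into a single Seifert piece), so their intersection is trivial. A short combinatorial argument on the alternation of Seifert and (impossible) hyperbolic pieces along the geodesic, taking into account that $g$ may lie in a single fiber shared by two consecutive Seifert vertices, then bounds the length of any fixed geodesic by $4$.

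The main obstacle, I expect, is promoting the classical malnormality of peripheral subgroups in hyperbolic and Seifert pieces to the profinite setting; this requires the goodness of $3$-manifold groups and separability of peripheral subgroups, both of which are deep consequences of Wise's quasiconvex hierarchy theorem and the Agol--Wise machinery packaged conveniently in \cite{WZ17}. Once these profinite analogues of malnormality are granted, the remainder of the argument is essentially Bass--Serre-theoretic book-keeping inside the standard tree $S$; the exact constant $4$ (rather than a weaker bound) comes from carefully tracking how many consecutive Seifert vertices can share a common fiber.
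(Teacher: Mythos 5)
First, a point of order: the paper does not prove this proposition at all --- it is imported verbatim as \cite[Proposition 4.1]{WZ18} --- so there is no internal proof to compare yours against. Measured against the argument in the cited source (and its discrete antecedent in Wilton--Zalesskii's work on graph manifolds), your overall plan is the right one: reduce via Definition \ref{def2}, kill hyperbolic interior vertices using profinite malnormality of the peripheral tori, push an element fixing two edges at a Seifert vertex into the fibre, and use the non-matching of fibres across JSJ tori. The deep external inputs you flag (profinite malnormality of peripheral systems, goodness, separability) are also correctly identified.

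The genuine gap sits exactly where you defer to ``a short combinatorial argument'': your claim that the peripheral cyclic subgroups of the base orbifold group of a Seifert piece form a malnormal family is false for the small Seifert pieces that actually occur in JSJ decompositions of closed manifolds. The standard culprit is the twisted $I$-bundle over the Klein bottle, Seifert fibred over the disc with two cone points of order $2$: its base orbifold group is $C_2 \ast C_2$ (not hyperbolic, so your hypothesis ``hyperbolic base orbifold'' silently excludes a case that does occur), and the peripheral subgroup there is the index-two infinite cyclic subgroup, which is normal --- as far from malnormal as possible; correspondingly the peripheral torus has index $2$ in the vertex group. For such vertices an element fixing two adjacent edges need not lie in the fibre. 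This is not a technicality: if your malnormality claims held at every Seifert vertex, the argument you sketch would show that an element fixing a geodesic of length $3$ already lies in the fibres of both interior vertices, whose intersection is trivial, yielding $2$-acylindricity --- so the exceptional pieces are precisely the reason the constant is $4$, and any proof that does not treat them separately cannot produce that bound. Two smaller inaccuracies: adjacent Seifert fibres generate a finite-index subgroup of the edge torus $\widehat{\Z}\times\widehat{\Z}$ and intersect trivially, but they need not be ``distinct rank-one direct summands'' as you assert; and the passage from malnormality of a peripheral family in the discrete group to malnormality of the closures in the profinite completion needs an explicit citation in the Seifert/virtually free case as well, not only in the hyperbolic one.
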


Then we can apply Theorem \ref{general} to deduce:

\begin{theo}\label{irr}
Let $M$ be a closed irreducible $3$-manifold. If $H$ is a finitely generated prosoluble subgroup of $\widehat{\pi_1(M)}$, then
$H$ embeds into a free profinite product $$\coprod_{v \in V} H_v$$ of $H$-stabilizers of vertices in $S$ (the standard profinite tree associated to $\widehat{\pi_1(M)}$) and one of the following statements holds:
\begin{enumerate}[(i)]
\item $H$ is isomorphic to a subgroup of a free prosoluble product of finitely many finitely generated pro-$p$ groups $H \cap u H_v u^{-1}$ for every $v \in V$ and $u \in \coprod_{v \in V} H_v$;

\item $H$ is a profinite Frobenius group $\widehat{\Z}_\pi\rtimes C$;

\item $H$ is a subgroup of $H_v$ for some $v \in V$.
\end{enumerate}
\end{theo}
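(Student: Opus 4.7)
The plan is to reduce immediately to the case where $M$ is closed and irreducible and then apply Theorem \ref{general} verbatim to the JSJ-decomposition of $\pi_1(M)$. The reduction from the general closed case to the irreducible case is essentially automatic: after passing to a retract we may assume $M$ is closed; the Prime Decomposition and the free-product argument of the previous subsection handle the reducible case (giving either (i) with pro-$p$ free factors, (ii) Frobenius, or a reduction to $H \leq \widehat{\pi_1(M_i)}$ for some prime factor), so the real work is for $M$ closed irreducible.

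Assuming henceforth that $M$ is closed irreducible, I would invoke the profinite JSJ-decomposition: by the work of Wilton--Zalesskii \cite{WZ17}, $\widehat{\pi_1(M)}$ is the profinite fundamental group of an injective finite graph of profinite groups $(\calG,\Gamma)$ obtained by completing the classical JSJ-decomposition, and it acts on the associated standard profinite tree $S$. The cited Proposition 4.1 of \cite{WZ18} then gives that this action is $4$-acylindrical. These are exactly the hypotheses of Theorem \ref{general} with $k=4$.

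Therefore Theorem \ref{general} applies to any finitely generated prosoluble $H\leq \widehat{\pi_1(M)}$, yielding an embedding
\[
H \hookrightarrow \coprod_{v\in V} H_v,
\]
where each $H_v = H \cap \calG(v)^g$ is a stabilizer of a vertex of $S$ in $H$, and the sum is over finitely many $H$-orbit representatives. The trichotomy is then immediate from the ``Moreover'' clause of Theorem \ref{general}: if $H = H_v$ for some $v$, we are in case (iii); if not, then either all $H_v$ are finitely generated pro-$p$, giving case (i) (with the bound on the number of generators coming from Theorem \ref{general} and \cite[Corollary 1.6]{Zal23}), or $H \simeq \widehat{\Z}_\pi \rtimes C$ is a profinite Frobenius group, giving case (ii).

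No step is really an obstacle here; this theorem is meant to be a clean specialization of Theorem \ref{general} to the JSJ-setting, with the $4$-acylindricity and injectivity being imported from \cite{WZ17,WZ18}. The only thing to be a little careful about is that the statement is phrased for closed $3$-manifolds rather than closed irreducible ones, so I would include one explicit sentence noting that the reducible and $S^2\times S^1$ (resp.\ non-orientable $S^2$-bundle) summands contribute free cyclic factors which are absorbed either into the free prosoluble product (case (i)) or into a single factor $H_v$ (case (iii)), so that no generality is lost by assuming irreducibility.
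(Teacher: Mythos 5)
Your proposal is correct and follows the paper's own route: the paper likewise disposes of the reducible case in the preceding subsection, quotes \cite[Proposition 4.1]{WZ18} for the $4$-acylindricity of the action on the JSJ standard tree, and then deduces the theorem by applying Theorem \ref{general} (whose ``Moreover'' clause gives exactly the trichotomy). Your extra remark about absorbing the $S^2\times S^1$ summands is a reasonable clarification of a point the paper leaves implicit.
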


If $H$ satisfies $(i)$, then we can apply Theorem \ref{henry} again and deduce $(i)$ or $(ii)$ of Theorem \ref{ori}. The case $(ii)$ corresponds to Theorem \ref{ori} $(ii)$. Therefore we may assume that $H$ satisfies item $(iii)$ and since $H$ in this case is conjugate into a vertex of $\widehat{\pi_1(M)}$ we may assume that $H$ is contained in the profinite completion of a geometric manifold. 

Thus, to complete the proof of Theorem \ref{ori} we need to study the profinite completion of $\pi_1(M)$ when $M$ is geometric. 

\subsection{Geometric cases}

The Geometrization conjecture, proved by G. Perelman on his approach to prove the Poincaré conjecture (see \cite{Per02}, \cite{Per03}, \cite{Per032}), says that a $3$-manifold is geometrically modeled by one of the eight Thurston's geometries
\begin{multicols}{4}
\begin{enumerate}[(i)]
\item $S^3$,

\item $S^2 \times \R$,

\item $\R^3$,

\item $Nil$,

\item $Sol$

\item $\H^3$,

\item $\H^2 \times \R$,

\item $\widetilde{\SL_2(\R)}$.
\end{enumerate}
\end{multicols}

The fundamental group of the first five are, respectively, of the following types:

\begin{enumerate}[(i)]
\item a finite cyclic group or a central extension of $D_{2n}$, $T$, $O$ or $I$ by a cyclic group of even order. We are denoting by $T$ the tetrahedral group of $12$ rotational symmetries of a tetrahedron, by $O$ the octahedral group of $24$ rotational symmetries of a cube or of an octahedron, by $I$ the icosahedral group of $60$ rotational symmetries of a dodecahedron or an icosahedron;

\item isomorphic to $\Z$ or the infinite dihedral group;

\item virtually abelian;

\item virtually nilpotent;

\item soluble.
\end{enumerate}

The fundamental groups in the case (i) are soluble. except  the case of  an extension of $I$ by a cyclic group of even order. The icosahedral group $I$ is isomorphic to $A_5$, which is non-soluble. Recall that $A_5$ has no proper subgroups of order greater than $12$ so the possible subgroups are $C_2, C_3$, $C_5$, $D_3, D_5, A_4$ and they are covered in the previous cases. Thus, to analyze the soluble subgroups when $\Gamma$ is a central extension of $A_5$ by $C_{2n}$ we can reduce it to the other soluble cases. 

If we consider $H$ as a prosoluble subgroup of the profinite completion of  groups (i)-(v), then $H$ can be any of its subgroups. We summarize the existing classification as:

\begin{enumerate}[(i)]
\item $H$ is a finite cyclic group or $H$ is a subgroup of a central extension of a dihedral, tetrahedral, octahedral or icosahedral group by a cyclic group of even order;

\item $H$ is a subgroup of the profinite completion of the infinite dihedral group;

\item $H$ is a subgroup of the profinite completion of a $3$-dimensional Bieberbach group (i.e, virtually $\Z^3$);

\item $H$ is a subgroup of the profinite completion of a group extension of $\Z^2 \rtimes_A \Z$ by $C$ where $C$ is trivial or isomorphic to $C_2$ and $A \in \GL_2(\Z)$ or a central extension of $\Z$ by $\Z^2$;
\end{enumerate}

The reader can check \cite{AFH20}, \cite[Section 4]{Sco83}, \cite[Section 1.5, Chapter 2, Chapter 3]{Gen21} and \cite{GM23} for details on the fundamental groups of the first five items.

The item $(vi)$ (hyperbolic case) is the subject of Theorem \ref{projective}. Since a finitely generated projective profinite group is a subgroup of a non dihedral free profinite product (cf. Proposition \ref{new}) it is a particular case of Theorem \ref{ori}, item $(i)$. So, it remains to deal with cases $(vii)$ and $(viii)$.

\subsection{Seifert manifolds}

In order to conclude the analysis of the possible irreducible compact $3$-manifolds we now deal with the Seifert manifolds. We have the following exact sequence:
$$1 \to \Z \to \pi_1(M) \to \pi_1(O) \to 1$$
where $\pi_1(O)$ is a Fuchsian group acting on $\Z$. Thus we describe first prosoluble subgroups of a profinite completion of a  Fuchsian group.

\begin{theo}\label{fuchsian}
Let $H$ be a prosoluble subgroup of a profinite completion of a Fuchsian group $\widehat{\pi_1(O)}$. Then one of the following holds:

\begin{enumerate}[(i)]
\item $H$ is a subgroup of a free prosoluble product of finitely many  finite cyclic $p$-groups;

\item $H\simeq \widehat{\Z}_\pi\rtimes C$ is a profinite Frobenius group;

\item $H$ is a finite cyclic group.
\end{enumerate}
\end{theo}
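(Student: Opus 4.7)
My plan is to realise $\pi_1(O)$ as the fundamental group of a $k$-acylindrical finite graph of procyclic profinite groups, and then invoke Theorem \ref{general} together with Corollary \ref{coro2} to identify $H$. The elementary cases are immediate: if $\pi_1(O)$ is finite, then it is finite cyclic and any subgroup is finite cyclic, giving (iii); if $\pi_1(O)$ is virtually infinite cyclic, every closed subgroup of $\widehat{\pi_1(O)}$ is either procyclic or profinite dihedral, each of which fits into one of (i), (ii), (iii) by direct inspection.

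For the non-elementary case I would use the classical fact that cutting the orbifold $O$ along a maximal system of disjoint essential simple closed geodesics avoiding the cone points realises $\pi_1(O)$ as the fundamental group of a finite graph of cyclic groups, with infinite cyclic edge groups and with vertex groups that are either finite cyclic (cone-point stabilisers) or infinite cyclic (annular pieces). By the techniques of \cite{WZ17} this splitting is preserved under profinite completion, so $\widehat{\pi_1(O)}$ becomes the fundamental group of an injective finite graph of procyclic profinite groups $(\calG,\Gamma)$. Malnormality of maximal cyclic subgroups in a torsion-free Fuchsian group (centralisers of hyperbolic elements are cyclic) then ensures that the action on the standard profinite tree is $k$-acylindrical for some $k$, after passing to a torsion-free subgroup of finite index and transporting the conclusion back to $\widehat{\pi_1(O)}$.

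Theorem \ref{general} now embeds $H$ into a free profinite product $\coprod_v H_v$ with each $H_v = H \cap \calG(v)^g$ closed in a procyclic group, hence procyclic itself. The trichotomy furnished by Corollary \ref{coro2} produces exactly the three cases of the statement: if all $H_v$ are pro-$p$, each is a cyclic pro-$p$ group and we obtain (i); if $H \simeq \widehat{\Z}_\pi \rtimes C$ is a profinite Frobenius group, we obtain (ii); and if $H$ is contained in a single conjugate $\calG(v)^g$, then $H$ is procyclic, so either $H$ is finite cyclic (case (iii)) or $H$ is a torsion-free procyclic group sitting as a single cyclic factor, which is captured by (i) with one free factor.

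The main obstacle is the first step: producing the graph-of-cyclic-groups decomposition and verifying that the induced action on the standard profinite tree is genuinely $k$-acylindrical. In the non-cocompact case $\pi_1(O)$ is already a free product of cyclic groups and the claim is essentially free. In the cocompact case one has to pass to a torsion-free finite-index surface subgroup, split it over $\Z$ along an essential simple closed geodesic, and then use Serre-goodness of Fuchsian groups together with the profinite acylindricity results of \cite{WZ17} to lift the splitting back to the full group $\widehat{\pi_1(O)}$ without losing acylindricity or injectivity of edge maps.
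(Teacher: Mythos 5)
Your reduction to Theorem \ref{general} and Corollary \ref{coro2} is the right engine, and your treatment of the elementary and non-cocompact cases is fine, but the central step fails in the cocompact case: a cocompact Fuchsian group is \emph{not} the fundamental group of a finite graph of cyclic groups with infinite cyclic edge groups. Cutting a closed hyperbolic orbifold along a maximal system of disjoint essential simple closed geodesics produces pairs of pants (possibly with cone points), not annuli, so the vertex groups are non-abelian free groups of rank at least $2$; and no such decomposition can exist at all, since a graph of groups whose vertex and edge groups are infinite cyclic has Euler characteristic $\geq 0$, while a cocompact Fuchsian group has $\chi<0$. Consequently your key claim that each $H_v=H\cap\calG(v)^g$ is procyclic has no basis, and case (i) does not follow. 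The paper instead uses the splitting $\widehat{\pi_1(O)}=\widehat{K}\amalg_{\widehat{C}}\widehat{F}$ of \cite{LMR96}, with $K$ a free product of cyclic groups, $F$ free of finite rank and $C$ infinite cyclic; there the vertex groups are not procyclic, and one needs the extra input of the pro-$p$ Kurosh Subgroup Theorem (applied to the pro-$p$ intersections $H\cap\widehat{K}^g$, which Theorem \ref{general} guarantees are pro-$p$ in case (i)) to land in a free product of cyclic pro-$p$ groups, plus a second application of Theorem \ref{general} when $H$ is conjugate into $\widehat{K}$.

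A secondary gap is your passage to a torsion-free finite-index subgroup ``and transporting the conclusion back'': such a subgroup need not contain $H$ (e.g.\ when $H$ has torsion), and deducing that $H$ itself is a subgroup of a free prosoluble product of cyclic $p$-groups from the corresponding statement for $H\cap\widehat{\Gamma_0}$ is not automatic. The paper avoids this by using \cite{HKS71} to choose a finite-sheeted cover whose completed fundamental group is an open subgroup \emph{containing} $H$ and which is not a triangle group, so no transport is needed.
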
 

\begin{proof}
Passing to an open subgroup containing $H$ if necessary (see \cite{HKS71}), we can replace $O$ by a non-trivial finite-sheeted cover of degree $n > 2$ whose profinite completion contains $H$, hence, we can assume that $\pi_1(O)$ is not a triangle group. In this case we have
$$\widehat{\pi_1(O)} = \widehat{K} \amalg_{\widehat{C}} \widehat{F}$$
where $K$ is a free product of cyclic groups, $F$ a free profinite group of finite rank and $C$ an infinite cyclic group (see \cite{LMR96}). 

Since $\widehat{C}$ is malnormal in $\widehat{\pi_1(O)}$, by \cite[Lemma 7.3]{WZ17}, $\widehat{\pi_1(O)}$ is a $1$-acylindrical finite graph of profinite groups. Applying Theorem \ref{general} we get one of the following cases:

\begin{itemize}
\item[(a)] $H$ is a subgroup of the free prosoluble product of intersections $H\cap \widehat{F}^h$, $h \in \widehat{\pi_1(O)}$ and $H\cap \widehat{K}^g$,  $g \in \widehat{\pi_1(O)}$, and these intersections are pro-$p$.

\item[(b)] $H$ is a profinite Frobenius group.

\item[(c)] $H$ is contained in either $\widehat{F}$ or $\widehat{K}$ up to conjugation.
\end{itemize}

Suppose that (a) holds. Note that $H\cap \widehat{F}^g$ is free pro-$p$ and $H\cap \widehat{K}^g$ is free pro-$p$ product of cyclic pro-$p$ groups by the pro-$p$ version of the Kurosh Subgroup Theorem (cf. \cite[Theorem 9.6.2]{Rib}). Then we have $(i)$. If  $H$ is Frobenius, we have item $(ii)$.

Suppose that (c) holds. If $H$ is the conjugate of $\widehat{F}$ then it is projective and so $(i)$ holds. Otherwise we may assume that $H\leq \widehat{K}$. Then applying Theorem \ref{general} again we deduce the statement of the theorem.
\end{proof}

\begin{theo}
Let $G$ be the fundamental group of a Seifert manifold with either geometry $\H^2 \times \R$ or $\widetilde{\SL_2(\R)}$ and $H$ a finitely generated prosoluble subgroup of $\widehat{G}$. Then  $H$ is an extension of a torsion-free procyclic group $\widehat{\Z}_{\sigma}$ by a subgroup of a  free prosoluble product of finitely many finite cyclic $p$-groups.
\end{theo}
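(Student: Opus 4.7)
The plan is to exploit the well-known central extension structure of the profinite completion of a Seifert manifold group and then reduce the problem to Theorem \ref{fuchsian}. For $M$ Seifert-fibred with hyperbolic base orbifold $O$ (which is the case for the geometries $\H^2\times\R$ and $\widetilde{\SL_2(\R)}$), the fundamental group $G$ fits into a central short exact sequence
$$1 \longrightarrow \cic{h} \longrightarrow G \longrightarrow \pi_1(O) \longrightarrow 1,$$
where $\cic{h}\cong\Z$ is the regular fibre subgroup. Since Seifert manifold groups are LERF and $\cic{h}$ is separable, its closure $N$ in $\widehat{G}$ is isomorphic to $\widehat\Z$ and remains central; thus
$$1 \longrightarrow \widehat\Z \longrightarrow \widehat{G} \xrightarrow{\;\pi\;} \widehat{\pi_1(O)} \longrightarrow 1.$$

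Next I set $H_1 := H \cap N$. As a closed subgroup of $\widehat\Z$, $H_1$ is torsion-free procyclic, i.e.\ $H_1 \cong \widehat\Z_\sigma$ for some set of primes $\sigma$ (possibly empty). The quotient $\pi(H) = H/H_1$ is a finitely generated prosoluble subgroup of the Fuchsian profinite group $\widehat{\pi_1(O)}$, so Theorem \ref{fuchsian} applies and splits the argument into three subcases: (i) $\pi(H)$ embeds into a finite free prosoluble product of finite cyclic $p$-groups; (ii) $\pi(H) \simeq \widehat\Z_\pi\rtimes C$ is a profinite Frobenius group; (iii) $\pi(H)$ is finite cyclic.

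In case (i) we immediately obtain the desired extension $1\to \widehat\Z_\sigma \to H \to \pi(H) \to 1$. In case (iii), $\pi(H) = C_n$ is finite cyclic; using the Seifert relation $q_i^{m_i} = h^{b_i}$ with $\gcd(m_i,b_i)=1$ at each cone point, any cyclic subgroup of $\pi_1(O)$ lifts to a cyclic subgroup of $\pi_1(M)$, so the preimage in $\widehat G$ of a finite cyclic subgroup of $\widehat{\pi_1(O)}$ is procyclic. Hence $H$ itself is torsion-free procyclic, $H\cong\widehat\Z_\tau$, and the conclusion follows by setting the procyclic factor equal to $H$ and $H_0 = 1$.

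The main obstacle will be case (ii), the Frobenius case. The strategy is to inspect $\pi^{-1}(\widehat\Z_\pi)$, which is a central extension of the procyclic group $\widehat\Z_\pi$ by the central $\widehat\Z$; such an extension is abelian because the commutator pairing on procyclic quotients vanishes. Hence $\pi^{-1}(\widehat\Z_\pi)$ is torsion-free abelian of torsion-free rank at most two, and $H\cap \pi^{-1}(\widehat\Z_\pi)$ is torsion-free procyclic (up to enlarging $\sigma$), while a Frobenius complement $C$ is finite cyclic of order coprime to every prime in $\pi$. One then argues that, modulo the enlarged central procyclic $\widehat\Z_\sigma$, the remaining quotient is $C$, which is finite cyclic of order coprime to $\pi$ and whose Sylow decomposition realises it as a subgroup of the free prosoluble product of finite cyclic $p$-groups (one for each prime dividing $|C|$). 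The delicate point, and the main technical obstacle, is to verify that this embedding is genuinely compatible with the Frobenius action — equivalently, to rule out any non-trivial lift of the Frobenius twist into the finite piece. An alternative route would be to show that case (ii) does not arise at all for Seifert groups, by combining the malnormality of $\widehat{C}$ in the amalgam $\widehat K \amalg_{\widehat C}\widehat F$ used in the proof of Theorem \ref{fuchsian} with the centrality of $\widehat\Z$ and Theorem \ref{general}.
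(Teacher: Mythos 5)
Your overall skeleton coincides with the paper's: pass to the (pro)cyclic fibre subgroup, set $\widehat{\Z}_\sigma = H\cap\widehat{\Z}$, apply Theorem \ref{fuchsian} to the image in $\widehat{\pi_1(O)}$, and treat the three cases. Cases (i) and (iii) are handled essentially as in the paper (your use of the cone-point relations in (iii) is a legitimate variant of the paper's centrality argument). The gap is exactly where you flag it, in the Frobenius case (ii), and neither of your two suggested routes closes it. First, the claim that $H\cap\pi^{-1}(\widehat{\Z}_\pi)$ is procyclic ``up to enlarging $\sigma$'' is false whenever $\sigma\cap\pi\neq\emptyset$ (which does occur, e.g.\ for the full preimage of a Frobenius subgroup): for $p\in\sigma\cap\pi$ the $p$-component is an extension of $\Z_p$ by $\Z_p$, and there is no exact sequence $1\to\Z_p\to\Z_p\to\Z_p\to 1$, so that component is $\Z_p\times\Z_p$ and the group is not procyclic. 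Second, even where the enlarged kernel is procyclic, your remaining quotient is the finite cyclic group $C$ of (generally composite) order $n$, and a finite cyclic group of composite order is \emph{not} a subgroup of a finite free prosoluble product of finite cyclic $p$-groups: every finite subgroup of a free pro-$\calC$ product fixes a vertex of the standard tree and is therefore conjugate into a free factor, so for instance $C_6\not\leq C_2\amalg C_3$. Third, the alternative of ruling out case (ii) altogether is hopeless, since Frobenius prosoluble subgroups genuinely occur in $\widehat{\pi_1(O)}$ by the classification of prosoluble subgroups of free profinite products.

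The paper resolves case (ii) by moving in the opposite direction: it \emph{shrinks} the kernel rather than enlarging it. Since torsion elements of $\pi_1(O)$ lift to elements centralizing the fibre and the Frobenius group $\widehat{\Z}_\pi\rtimes C_n$ is generated by torsion, the extension $1\to\widehat{\Z}_\sigma\to H\to\widehat{\Z}_\pi\rtimes C_n\to 1$ is central. Writing $\rho$ for the primes dividing $n$ and $\rho'=\sigma\setminus\rho$, torsion-freeness forces the preimage of $C_n$ to be $\widehat{\Z}_{\rho'}\times\widehat{\Z}_{\rho}$, and one obtains
$$H\cong(\widehat{\Z}_{\rho'}\times\widehat{\Z}_\pi)\rtimes\widehat{\Z}_{\rho}\cong\widehat{\Z}_{\rho'}\rtimes(\widehat{\Z}_\pi\rtimes\widehat{\Z}_{\rho}),$$
where the procyclic normal subgroup of the statement is the \emph{smaller} group $\widehat{\Z}_{\rho'}$ and the quotient $\widehat{\Z}_\pi\rtimes\widehat{\Z}_{\rho}$ is projective because $\pi\cap\rho=\emptyset$ (as $n\mid p-1$ for all $p\in\pi$), whence it embeds in a free product of cyclic $p$-groups. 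Some redistribution of the $\rho$-part of $\widehat{\Z}_\sigma$ into the quotient along these lines is needed to complete your argument.
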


\begin{proof}
We have the following exact sequence:
$$1 \to \Z \to \pi_1(M) \to \pi_1(O) \to 1$$
where $\pi_1(O)$ is a Fuchsian group acting on $\Z$ either trivially or by inversion.  
Then we have an action of $\widehat{\pi_1(O)}$ and hence  of $H_0=H\widehat{\Z}/\widehat{\Z}$ on $\widehat{\Z}$  and  this action is trivial or by inversion.

Thus we have an exact sequence
$$1 \to \widehat{\Z}_{\sigma} \to H \to H_0 \to 1,$$ where $\widehat{\Z}_\sigma=H\cap \widehat \Z$ and $H_0$ acts on $\widehat{\Z}_\sigma$ either trivially or by inversion.

By Theorem \ref{fuchsian} we have the following 3 possibilities for $H_0$.
\begin{enumerate}
\item[(a)] $H_0$ is a subgroup of a free prosoluble product of finitely many  finite cyclic $p$-groups;

\item[(b)] $H_0 \simeq \widehat{\Z}_{\pi} \rtimes C$ is a profinite Frobenius group;

\item[(c)] $H_0$ is a finite cyclic group.
\end{enumerate}
and we analyze each of them in turn.
\begin{enumerate}[(a)]
\item[(a)] In this case $H$ is an extension of $\widehat{\Z}_{\sigma}$ by $H_0$ with the  either trivial action  or by inversion.

\item[(b)] Suppose that
$$1 \to \widehat{\Z}_{\sigma} \to H \to \widehat{\Z}_{\pi} \rtimes C_n \to 1$$
is an exact sequence with $\widehat{\Z}_{\pi} \rtimes C_n$ a profinite Frobenius group with $C_n$ a finite cyclic group of order $n$. As the lift of every element of finite order of $\pi_1(O)$ to $\pi_1(M)$ must centralize $\Z$, the lift of every element of finite order of $H_0$ must centralize $\widehat{\Z}_{\sigma}$. But $H_0$ is generated by torsion elements, so the extension is central in this case. 

Let $\rho$ be the set of all primes dividing $n$. Then the preimage of $C_n$ in $H$ is $ \widehat{\Z}_{\rho'} \times \widehat{\Z}_{\rho}$ where $\rho'$ is the complement of $\rho$ in $\sigma$ (it is isomorphic but not equal to $\widehat{\Z}_{\sigma}$). Thus we can write $H$ as ($\widehat{\Z}_{\rho'} \times \widehat{\Z}_\pi)\rtimes \widehat{\Z}_{\rho}\cong \widehat{\Z}_{\rho'} \rtimes (\widehat{\Z}_\pi\rtimes \widehat{\Z}_{\rho})$). As $\pi\cap \rho=\emptyset$, $ \widehat{\Z}_\pi \rtimes \widehat{\Z}_{\rho}$  is projective and hence a subgroup of a free profinite product of cyclic $p$-groups (since a finitely generated projective group is a subgroup of a free group of finite rank).

\item[(c)] Suppose that
$$1 \to \widehat{\Z}_{\sigma} \to H \to C_n \to 1$$
is an exact sequence with $C_n$ finite cyclic of order $n$. As the lift of every element of finite order of $\pi_1(O)$ to $\pi_1(M)$ must centralize $\Z$, the lift of every element of finite order of $H_0$ must centralize $\widehat{\Z}_{\sigma}$. Hence the extension  is central and so $H$ is procyclic torsion-free.
\end{enumerate}
\end{proof}




\begin{thebibliography}{BMRS20}

\bibitem[Ago13]{Ago13}
I.~Agol.
\newblock The virtual Haken conjecture (with an appendix by Ian Agol, Daniel
  Groves and Jason Manning).
\newblock {\em Documenta Mathematica}, 18:1045--1087, 2013.
\newblock \href {https://doi.org/10.4171/dm/421} {\path{doi:10.4171/dm/421}}.

\bibitem[AZ22]{AZ}
M.~P.~S. Aguiar and P.~A. Zalesskii.
\newblock The profinite completion of the fundamental group of infinite graphs
  of groups.
\newblock {\em Israel Journal of Mathematics}, 250(1):429--462, 2022.
\newblock \href {https://doi.org/10.1007/s11856-022-2342-2}
  {\path{doi:10.1007/s11856-022-2342-2}}.

\bibitem[AFW15]{AFH20}
M.~Aschenbrenner, S.~Friedl, and H.~Wilton.
\newblock {\em $3$-manifold groups}, volume~20.
\newblock European Mathematical Society Z{\"u}rich, 2015.
\newblock URL: \url{https://ems.press/books/elm/217}, \href
  {https://doi.org/10.4171/154} {\path{doi:10.4171/154}}.

\bibitem[BHW11]{BHW11}
N.~Bergeron, F.~Haglund, and D.~T. Wise.
\newblock Hyperplane sections in arithmetic hyperbolic manifolds.
\newblock {\em Journal of the London Mathematical Society}, 83(2):431--448,
  2011.
\newblock \href {https://doi.org/10.1112/jlms/jdq082}
  {\path{doi:10.1112/jlms/jdq082}}.

\bibitem[BMRS20]{BMRS20}
M.~A. Bridson, D.~B. McReynolds, A.~W. Reid, and R.~Spitler.
\newblock Absolute profinite rigidity and hyperbolic geometry.
\newblock {\em Annals of Mathematics}, 192(3):679--719, 2020.
\newblock \href {https://doi.org/10.4007/annals.2020.192.3.1.code}
  {\path{doi:10.4007/annals.2020.192.3.1.code}}.

\bibitem[BRW17]{BRW17}
M.~R. Bridson, A.~W. Reid, and H.~Wilton.
\newblock Profinite rigidity and surface bundles over the circle.
\newblock {\em Bulletin of the London Mathematical Society}, 49(5):831--841,
  2017.
\newblock \href {https://doi.org/10.1112/blms.12076}
  {\path{doi:10.1112/blms.12076}}.

  \bibitem[CZ23]{CZ}
I.~Castellano and P.~Zalesskii.
\newblock A pro-$p$ version of Sela's accessibility and Poincaré duality
  pro-$p$ groups.
\newblock {\em Groups Geom. Dyn.}, 18:1349–1368, 2024.
\newblock URL: \url{https://doi.org/10.4171/ggd/769}, \href
  {https://doi.org/10.4171/GGD/769} {\path{doi:10.4171/GGD/769}}.

\bibitem[CZ22]{ChZ22}
Z.~Chatzidakis and P.~Zalesskii.
\newblock Pro-$p$ groups acting on trees with finitely many maximal vertex
  stabilizers up to conjugation.
\newblock {\em Israel Journal of Mathematics}, 247(2):593--634, 2022.
\newblock \href {https://doi.org/10.1007/s11856-022-2287-5}
  {\path{doi:10.1007/s11856-022-2287-5}}.

\bibitem[CW22]{CW22}
T.~Cheetham-West.
\newblock Absolute profinite rigidity of some closed fibered hyperbolic
  3-manifolds. 
  \newblock {\em Mathematical Research Letters},  31:615--638, 2024.
\newblock {\em arXiv preprint arXiv:2205.08693}, 2022.

\bibitem[Coh06]{Coh06}
D.~E. Cohen.
\newblock Group with free subgroups of finite index.
\newblock In {\em Conference on Group Theory: University of Wisconsin-Parkside
  1972}, pages 26--44. Springer, 2006.
\newblock \href {https://doi.org/10.1007/BFb0058927}
  {\path{doi:10.1007/BFb0058927}}.

\bibitem[Ein19]{Ein19}
E.~Einstein.
\newblock Hierarchies for relatively hyperbolic virtually special groups.
\newblock {\em arXiv preprint arXiv:1903.12284}, 2019.
\newblock \href {https://doi.org/10.48550/arXiv.1903.12284}
  {\path{doi:10.48550/arXiv.1903.12284}}.

\bibitem[GSW21]{GSW21}
D.~Gon{\c{c}}alves, P.~Sankaran, and P.~Wong.
\newblock Twisted conjugacy in fundamental groups of geometric $3$-manifolds.
\newblock {\em Topology and its Applications}, 293:107568, 2021.
\newblock \href {https://doi.org/10.1016/j.topol.2020.107568}
  {\path{doi:10.1016/j.topol.2020.107568}}.

\bibitem[GM23]{GM23}
D.~L. Gon{\c{c}}alves and S.~T. Martins.
\newblock The groups Aut and Out of the fundamental group of a closed Sol
  $3$-manifold.
\newblock {\em Journal of Algebra and Its Applications}, 22, 2023.
\newblock \href {https://doi.org/10.1142/S0219498823501980}
  {\path{doi:10.1142/S0219498823501980}}.

\bibitem[HW08]{HW08}
F.~Haglund and D.~T. Wise.
\newblock Special cube complexes.
\newblock {\em Geometric and Functional Analysis}, 17:1551--1620, 2008.
\newblock \href {https://doi.org/10.1007/s00039-007-0629-4}
  {\path{doi:10.1007/s00039-007-0629-4}}.

\bibitem[HW12]{HW12}
F.~Haglund and D.~T. Wise.
\newblock A combination theorem for special cube complexes.
\newblock {\em Annals of Mathematics}, 176:1427--1482, 2012.
\newblock URL: \url{http://doi.org/10.4007/annals.2012.176.3.2}, \href
  {https://doi.org/10.4007/annals.2012.176.3.2}
  {\path{doi:10.4007/annals.2012.176.3.2}}.

\bibitem[Har87]{Har87}
D.~Haran.
\newblock On closed subgroups of free products of profinite groups.
\newblock {\em Proceedings of the London Mathematical Society}, 3(2):266--298,
  1987.
\newblock \href {https://doi.org/10.1093/plms/s3-55_2.266}
  {\path{doi:10.1093/plms/s3-55_2.266}}.

\bibitem[HZ23]{HZ}
D.~Haran and P.~A. Zalesskii.
\newblock Relatively projective pro-$p$ groups.
\newblock {\em Israel Journal of Mathematics}, 257(2):313--352, 2023.
\newblock \href {https://doi.org/10.1007/s11856-023-2544-2}
  {\path{doi:10.1007/s11856-023-2544-2}}.

\bibitem[Hem04]{Hem04}
J.~Hempel.
\newblock {\em $3$-Manifolds}, volume 349.
\newblock American Mathematical Soc., 2004.

\bibitem[HKS71]{HKS71}
A.~H.~M. Hoare, A.~Karrass, and D.~Solitar.
\newblock Subgroups of finite index of Fuchsian groups.
\newblock {\em Mathematische Zeitschrift}, 120(4):289--298, 1971.
\newblock \href {https://doi.org/10.1007/BF01109993}
  {\path{doi:10.1007/BF01109993}}.

\bibitem[Jar94]{Jar94}
M.~Jarden.
\newblock Prosolvable subgroups of free products of profinite groups.
\newblock {\em Communications in Algebra}, 22(4):1467--1494, 1994.
\newblock \href {https://doi.org/10.1080/00927879408824917}
  {\path{doi:10.1080/00927879408824917}}.

\bibitem[KM12]{JV12}
J.~Kahn and V.~Markovic.
\newblock Immersing almost geodesic surfaces in a closed hyperbolic three
  manifold.
\newblock {\em Annals of Mathematics}, pages 1127--1190, 2012.
\newblock \href {https://doi.org/10.4007/annals.2012.175.3.4}
  {\path{doi:10.4007/annals.2012.175.3.4}}.

\bibitem[KPS73]{KPS73}
A.~Karrass, A.~Pietrowski, and D.~Solitar.
\newblock Finite and infinite cyclic extensions of free groups.
\newblock {\em Journal of the Australian Mathematical Society}, 16(4):458--466,
  1973.
\newblock \href {https://doi.org/10.1017/S1446788700015445}
  {\path{doi:10.1017/S1446788700015445}}.

\bibitem[LMR96]{LMR96}
D.~D. Long, C.~Maclachlan, and A.~W. Reid.
\newblock Splitting groups of signature $(1;n)$.
\newblock {\em Journal of Algebra}, 185(2):329--341, 1996.
\newblock \href {https://doi.org/10.1006/jabr.1996.0328}
  {\path{doi:10.1006/jabr.1996.0328}}.

\bibitem[Ner21]{Gen21}
G.~J. Nery.
\newblock {\em G{\^e}nero profinito de grupos de variedades planas e
  sol{\'u}veis}.
\newblock PhD thesis, Universidade de Bras{\'i}lia, 2021.

\bibitem[Per02]{Per02}
G.~Perelman.
\newblock The entropy formula for the Ricci flow and its geometric
  applications.
\newblock {\em arXiv preprint math/0211159}, 2002.
\newblock \href {https://doi.org/10.48550/arXiv.math/0211159}
  {\path{doi:10.48550/arXiv.math/0211159}}.

\bibitem[Per03a]{Per032}
G.~Perelman.
\newblock Finite extinction time for the solutions to the Ricci flow on certain
  three-manifolds.
\newblock {\em arXiv preprint math/0307245}, 2003.
\newblock \href {https://doi.org/10.48550/arXiv.math/0307245}
  {\path{doi:10.48550/arXiv.math/0307245}}.

\bibitem[Per03b]{Per03}
G.~Perelman.
\newblock Ricci flow with surgery on three-manifolds.
\newblock {\em arXiv preprint math/0303109}, 2003.
\newblock \href {https://doi.org/10.48550/arXiv.math/0303109}
  {\path{doi:10.48550/arXiv.math/0303109}}.

\bibitem[PS24]{PS24}
N.~Petrosyan and B.~Sun.
\newblock $L^2$-betti numbers of Dehn fillings.
\newblock {\em arXiv preprint arXiv:2412.16090}, 2024.
\newblock \href {https://doi.org/10.48550/arXiv.2412.16090}
  {\path{doi:10.48550/arXiv.2412.16090}}.

\bibitem[Rib17]{Rib}
L.~Ribes.
\newblock {\em Profinite graphs and groups}, volume~66.
\newblock Springer, 2017.
\newblock \href {https://doi.org/10.1007/978-3-319-61199-0}
  {\path{doi:10.1007/978-3-319-61199-0}}.

\bibitem[RZ10]{RZ}
L.~Ribes and P.~Zalesskii.
\newblock {\em Profinite groups}.
\newblock Springer, 2010.
\newblock \href {https://doi.org/10.1007/978-3-642-01642-4}
  {\path{doi:10.1007/978-3-642-01642-4}}.

\bibitem[Sco74]{Sco74}
G.~P. Scott.
\newblock An embedding theorem for groups with a free subgroup of finite index.
\newblock {\em Bulletin of the London Mathematical Society}, 6(3):304--306,
  1974.
\newblock \href {https://doi.org/10.1112/blms/6.3.304}
  {\path{doi:10.1112/blms/6.3.304}}.

\bibitem[Sco83]{Sco83}
P.~Scott.
\newblock The geometries of $3$-manifolds.
\newblock {\em Bulletin of the London Mathematical Society}, 1983.
\newblock \href {https://doi.org/10.1112/blms/15.5.401}
  {\path{doi:10.1112/blms/15.5.401}}.

\bibitem[Sel97]{Sel97}
Z.~Sela.
\newblock Acylindrical accessibility for groups.
\newblock {\em Inventiones mathematicae}, 129(3):527--565, 1997.
\newblock \href {https://doi.org/10.1007/s002220050172}
  {\path{doi:10.1007/s002220050172}}.

\bibitem[Ser02]{Ser}
J-P. Serre.
\newblock {\em Trees}.
\newblock Springer Science \& Business Media, 2002.
\newblock \href {https://doi.org/10.1007/978-3-642-61856-7}
  {\path{doi:10.1007/978-3-642-61856-7}}.

\bibitem[Wil18]{Wil18}
G.~Wilkes.
\newblock Profinite rigidity of graph manifolds and JSJ decompositions of
  $3$-manifolds.
\newblock {\em Journal of Algebra}, 502:538--587, 2018.
\newblock \href {https://doi.org/10.1016/j.jalgebra.2017.12.039}
  {\path{doi:10.1016/j.jalgebra.2017.12.039}}.

\bibitem[WZ17]{WZ17}
H.~Wilton and P.~Zalesskii.
\newblock Distinguishing geometries using finite quotients.
\newblock {\em Geometry $\&$ Topology}, 21(1):345--384, 2017.
\newblock \href {https://doi.org/10.2140/gt.2017.21.345}
  {\path{doi:10.2140/gt.2017.21.345}}.

\bibitem[WZ18]{WZ18}
H.~Wilton and P.~Zalesskii.
\newblock Pro-$p$ subgroups of profinite completions of $3$-manifold groups.
\newblock {\em Journal of the London Mathematical Society}, 96(2):293--308,
  2018.
\newblock \href {https://doi.org/10.1112/jlms.12067}
  {\path{doi:10.1112/jlms.12067}}.

\bibitem[Wis21]{Wis11}
D.~T. Wise.
\newblock The structure of groups with a quasiconvex hierarchy.
\newblock 366, 2021.
\newblock \href {https://doi.org/10.2307/j.ctv1574pr6}
  {\path{doi:10.2307/j.ctv1574pr6}}.

\bibitem[Zal04]{Zal04}
P.~A. Zalesskii.
\newblock On virtually projective groups.
\newblock {\em Journal für die reine und angewandte Mathematik},
  2004(572):97--110, 2004.
\newblock \href {https://doi.org/10.1515/crll.2004.055}
  {\path{doi:10.1515/crll.2004.055}}.

\bibitem[Zal23a]{Zal22new}
P.~Zalesskii.
\newblock The profinite completion of relatively hyperbolic virtually special
  groups.
\newblock {\em Israel Journal of Mathematics}, pages 1--14, 2023.
\newblock \href {https://doi.org/10.1007/s11856-023-2584-7}
  {\path{doi:10.1007/s11856-023-2584-7}}.

\bibitem[Zal23b]{Zal22}
P.~Zalesskii.
\newblock Relatively projective profinite groups.
\newblock {\em European Journal of Mathematics}, 9, 2023.
\newblock \href {https://doi.org/10.1007/s40879-023-00705-1}
  {\path{doi:10.1007/s40879-023-00705-1}}.

\bibitem[Zal24]{Zal23}
P. Zalesskii.
\newblock Prosoluble subgroups of free profinite products.
\newblock {\em Forum of Mathematics, Sigma (to appear)}, 2024.
\newblock \href {https://doi.org/10.1017/fms.2024.92}
  {\path{doi:10.1017/fms.2024.92}}.

\bibitem[ZM89a]{ZM-89}
P.~A. Zalesskii and O.~V. Mel'nikov.
\newblock Fundamental groups of graphs of profinite groups.
\newblock {\em Algebra i Analiz}, 1(4):117--135, 1989.

\bibitem[ZM89b]{ZM88}
P.~A. Zalesskii and O.~V. Mel'nikov.
\newblock Subgroups of profinite groups acting on trees.
\newblock {\em Mathematics of the USSR-Sbornik}, 63(2):405, 1989.
\newblock \href {https://doi.org/10.1070/SM1989v063n02ABEH003282}
  {\path{doi:10.1070/SM1989v063n02ABEH003282}}.

\end{thebibliography}
\end{document}